\newtheorem{thm}{Theorem}[section]
\newtheorem{proposition}[thm]{Proposition}
\newtheorem{corollary}[thm]{Corollary}
\newtheorem{cor}[thm]{Corollary}
\newtheorem{lem}[thm]{Lemma}
\newtheorem{prop}[thm]{Proposition}
\theoremstyle{definition}
\newtheorem{definition}{Definition}[section]
\theoremstyle{observation}
\theoremstyle{definition}
\newtheorem{example}{Example}[section]
\newcommand{\imp}{\frak{I}}
\newcommand{\pimp}{\frak{P}}
\newcommand{\B}{\mathcal{B}}
\newcommand{\M}{\mathcal{M}}
\newcommand{\ccl}[1]{{\mathbf {#1}}} 
\newcommand{\uc}{\mathbb{S}^1}
\newcommand{\defin}[1]{{\it #1}}
\newcommand{\R}{\mathbb{R}}
\newcommand{\N}{\mathbb{N}}
\newcommand{\Q}{\mathbb{Q}}
\theoremstyle{theorem}
\newtheorem*{riemann-mapping-theorem}{Riemann Mapping  Theorem}
{\bf}{\it}
{\bf}{\it}
{\bf}{\it}
{\bf}{\it}
{\bf}{\it}
\newtheorem*{fshb}{Fatou-Shishikura Bound}{\bf}{\it}
\newtheorem{tet}{Conditional Implication}{\bf}{\it}
\newtheorem*{carat}{Carath{\'e}odory Theorem}{\bf}{\it}
\newtheorem*{baby-carat}{Carath{\'e}odory Theorem for locally connected domains}{\bf}{\it}
{\bf}{\it}
{\bf}{\it}
\theoremstyle{remark}
\newtheorem{remark}[thm]{Remark}
\newenvironment{pf*}[1]{\proof[#1]}{\endproof}
\newcommand{\cal}[1]{{\mathcal #1}}
\newcommand{\nothing}[1]{{}}
\newcommand{\beq}{\begin{equation}}
\newcommand{\eeq}{\end{equation}}
\newtheorem{defn}{Definition}[section]
\newtheorem{rem}{Remark}[section]
\renewcommand{\deg}{\operatorname{deg}}
\newcommand{\riem}{{\widehat{\CC}}}
\newcommand{\diam}{\operatorname{diam}}
\newcommand{\dist}{\operatorname{dist}}
\newcommand{\eps}{\epsilon}
\numberwithin{equation}{section}
\newcommand{\thmref}[1]{Theorem~\ref{#1}}
\newcommand{\propref}[1]{Proposition~\ref{#1}}
\newcommand{\corref}[1]{Corollary~\ref{#1}}
\newcommand{\supp}{\operatorname{Supp}}
\newcommand{\cI}{{\cal I}}
\newcommand{\cA}{{\cal A}}
\newcommand{\cM}{{\cal M}}
\newcommand{\cB}{{\cal B}}
\renewcommand{\cD}{{\cal D}}
\newcommand{\cS}{{\cal S}}
\newcommand{\cK}{{\cal K}}
\newcommand{\CC}{{\mathbb C}}
\newcommand{\RR}{{\mathbb R}}
\newcommand{\ZZ}{{\mathbb Z}}
\newcommand{\NN}{{\mathbb N}}
\newcommand{\DD}{{\mathbb D}}
\newcommand{\QQ}{{\mathbb Q}}
\newcommand{\ov}[1]{\overline{#1}}
\newcommand{\ignore}[1]{{}}
\title[Computable geometric analysis and  dynamics]{Computable geometric complex analysis and complex dynamics}
\author{Cristobal Rojas}
\address{Departamento de Matem\'aticas, Universidad Andres Bello\\  Rep\'ublica 498, Santiago, Chile.}
\thanks{C.R. was partially supported by Proyecto FONDECYT  No. 1150222 and Basal PFB-03 CMM - Universidad de Chile. }
\author{Michael Yampolsky}
\address{Department of Mathematics, University of Toronto\\  40 St. George street, Toronto, ON, Canada.}
\thanks{M.Y. was partially supported by NSERC Discovery Grant.} 
\begin{document}
\begin{abstract}
We discuss computability and computational complexity of conformal mappings  and their boundary extensions. As applications, we review the state of the art regarding computability and complexity of Julia sets, their invariant measures and external rays impressions.  
\end{abstract}

\maketitle

\section{Introduction}
The purpose of this paper is to survey  the exciting recent applications of Computable Analysis to Geometric Complex Analysis and
Complex Dynamics. 
Computable Analysis was founded by Banach and Mazur \cite{BM} in 1937,
only one year after the birth of Turing Machines and Post Machines.
Interrupted by war, this work was further developed in the book
by Mazur \cite{Maz}, and followed in mid-1950's by the works
of Grzegorczyk \cite{Grz},  Lacombe \cite{Lac}, and others. A parallel school of 
Constructive Analysis was founded by  A.~A.~Markov in Russia in the late 1940's.
A modern treatment of the field can be found in \cite{Ko} and \cite{Wei}.

In the last two decades, there has been much activity both by theorists and by computational practitioners in applying 
natural notions of computational hardness to such complex-analytic objects as the Riemann Mapping, Carath{\'e}odory Extension,
the Mandelbrot set, and Julia sets. The goal of this survey is to  give a brief summary of the beautiful interplay between Computability,
Analysis, and Geometry, which
has emerged from these works. 
\section{Required computability notions}

In this section we gather all the computability notions involved in the statements presented in the survey; the reader is referred to  \cite{Wei} for the details.

\subsection{Computable metric spaces}

\begin{definition}
A \defin{computable metric space} is a triple $(X,d,\cS)$ where:
\begin{enumerate}
\item $(X,d)$ is a separable metric space,
\item $\cS=\{s_i:i\in\N\}$ is a dense sequence of points in $X$,
\item $d(s_i,s_j)$ are computable real numbers, uniformly in $(i,j)$.
\end{enumerate}
\end{definition}

The  points in $\cS$ are called  \defin{ideal}. 
\begin{example}
A basic example is to take the space $X=\RR^n$ with the usual notion of Euclidean distance $d(\cdot,\cdot)$, and 
to let the set $\cS$ consist of points $\bar x=(x_1,\ldots,x_n)$ with rational coordinates. In what follows,
we will implicitly make these choices of $\cS$ and $d(\cdot,\cdot)$ when discussing computability in $\RR^n$.
\end{example}

\begin{definition}A point $x$ is \defin{computable} if there is a computable function
$f:\NN\to\cS$  such that $$d(f(n),x)<2^{-n}\text{ for all }n.$$
\end{definition}

If $x\in X$ and $r>0$, the metric ball $B(x,r)$ is defined as 
$$B(x,r)=\{y\in X:d(x,y)<r\}.$$
Since the set $\cB:=\{B(s,q):s\in\cS,q\in\Q, q>0\}$ of \defin{ideal balls} is countable, we can fix an enumeration
$\B=\{B_i:i\in\N\}$, which we assume to be effective with respect to the enumerations of $\cS$ and $\QQ$.

\begin{definition}
 An open set $U$ is called \defin{lower-computable} if there is a computable function $f:\NN\to\NN$
 such that $$U=\bigcup_{n\in \NN}B_{f(n)}.$$ 
\end{definition}
It is not difficult to see that finite intersections or infinite unions of (uniformly) lower-computable open sets are again lower computable.

Having defined lower-computable open sets, we naturally proceed to the following definitions for closed sets.
\begin{definition}\label{d.upper.comp} A closed set $K$  is called  \defin{upper-computable} if its complement is a lower-computable open set, and   \defin{lower-computable} if the relation $K\cap B_{i}\neq \emptyset$ is semi-decidable, uniformly in $i$.
\end{definition}


\noindent
In other words, a closed set $K$ is lower-computable if there exists an algorithm $\cA$ which enumerates all ideal balls which have  non-empty intersection with $K$.
To see that this definition is a natural extension of lower computability of open sets, we note:
\begin{example} The closure $\ov{U}$ of any open lower-computable set $U$ is lower-computable since $B_{i}\cap \ov{U}\neq \emptyset$ if and only if there exists $s\in  B_{i}\cap U$, which is uniformly semi-decidable for such $U$.



\end{example}

The following is  a useful characterization of lower-computable sets:
\begin{proposition}\label{p.comp-closed}
A closed set $K$ is lower-computable if and only if  there exists a sequence of uniformly computable  points $x_{i}\in K$ which is dense in $K$.
 \end{proposition}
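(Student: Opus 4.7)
My plan is to prove the two implications separately.

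For the easier direction ($\Leftarrow$), suppose $(x_i)_{i\in\NN}$ is a uniformly computable dense sequence in $K$. Given an index $j$, write $B_j=B(c_j,q_j)$ with $c_j\in\cS$ and $q_j\in\QQ_{>0}$. Since $K$ is closed and $(x_i)$ is dense in $K$, the open ball $B_j$ meets $K$ if and only if it contains some $x_i$, i.e.\ $d(x_i,c_j)<q_j$ for some $i$. Each distance $d(x_i,c_j)$ is a computable real (uniformly in $i,j$ since $x_i$ is uniformly computable and $c_j\in\cS$), so the strict inequality $d(x_i,c_j)<q_j$ is semi-decidable uniformly in $(i,j)$. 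Dovetailing this test over $i$ semi-decides $K\cap B_j\neq\emptyset$ uniformly in $j$.

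For the forward direction ($\Rightarrow$), I will produce, for each ideal ball $B_i$ that eventually turns out to meet $K$, a computable point $x_i\in K\cap\overline{B_i}$; enumerating over $i$ gives a dense sequence in $K$. The construction is by a nested-ball recursion. Set $B^{(0)}:=B_i$. Given $B^{(k)}=B(s_k,r_k)$ known to intersect $K$, I dovetail a search over all triples $(s',q)\in\cS\times\QQ_{>0}$ satisfying
\[
q<2^{-k-2},\qquad d(s',s_k)+q<r_k,
\]
running in parallel the semi-decision procedure for $B(s',q)\cap K\neq\emptyset$. The first such triple that succeeds defines $B^{(k+1)}:=B(s',q)$, and I set $s_{k+1}:=s'$, $r_{k+1}:=q$. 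Such a triple must exist: if $p\in K\cap B^{(k)}$ and $\eps:=r_k-d(s_k,p)>0$, choose any $s'\in\cS$ with $d(s',p)<\eps/3$ and any rational $q$ with $d(s',p)<q<\min(\eps/3,2^{-k-2})$; then the triangle inequality forces $B(s',q)\subset B^{(k)}$ and $p\in B(s',q)\cap K$.

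By construction the centers $s_k$ form a Cauchy sequence with $d(s_k,s_{k+1})<r_k\le 2^{-k-2}$, so they converge to a computable point $x_i$ (the recursion is effective because each step is a dovetailed search that terminates). Since $B^{(k)}\cap K\neq\emptyset$ for every $k$ and $\operatorname{diam}B^{(k)}\to 0$, there exist $p_k\in K$ with $p_k\to x_i$, and closedness of $K$ yields $x_i\in K$. Density of the resulting family $\{x_i\}$ in $K$ then follows because every point of $K$ lies in arbitrarily small ideal balls $B_i$ which meet $K$, and each such $B_i$ produces an $x_i$ within $B_i$.

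The main delicate point is uniformity and termination: I must ensure that the dovetailed search at each recursion step is guaranteed to succeed using \emph{only} the semi-decidability hypothesis, which is why I impose the strict containment condition $d(s',s_k)+q<r_k$ together with the radius bound, and why the existence argument via the interior point $p\in K\cap B^{(k)}$ is essential. The other minor subtlety is that for indices $i$ with $B_i\cap K=\emptyset$ the procedure never terminates at step $0$; this is harmless since I only need to output points for those $i$ for which the semi-decision succeeds, and these already form a dense family in $K$.
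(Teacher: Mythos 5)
Your proof is correct and takes essentially the same route as the paper: the forward direction is the same nested-ball recursion (shrink radii exponentially, maintain the invariant that each ball meets $K$, use semi-decidability to find the next ball, take the limit of the centers), and the backward direction is the dovetailed distance test which the paper dismisses as "obvious." The only quibble is a small indexing slip in the estimate "$d(s_k,s_{k+1})<r_k\le 2^{-k-2}$": since the radius bound $q<2^{-k-2}$ is imposed when \emph{producing} $B^{(k+1)}$, what you actually get is $r_k<2^{-k-1}$ for $k\ge 1$ (and $r_0$ is the radius of $B_i$), so $d(s_k,s_{k+1})<2^{-k-1}$ for $k\ge 1$; the geometric decay and computability of the limit are unaffected.
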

\begin{proof}
Observe that, given some ideal ball $B=B(s,q)$ intersecting $K$, the relations $\ov{B_i}\subset B$, $ q_{i}< 2^{-k}$ and $B_{i}\cap K\neq \emptyset$ are all semi-decidable and then we can find an exponentially decreasing sequence of ideal balls $(B_{k})$ intersecting $K$. Hence $\{x\}=\cap_k B_k$ is a computable point lying in $B\cap K$.

The other direction is obvious.
\end{proof}

\begin{definition}A closed set is \defin{computable} if it is lower and upper computable.
\end{definition}

Here is an alternative way to define a computable set. Recall that {\it Hausdorff distance} between two nonempty
compact sets $K_1$, $K_2$ is
$$\dist_H(K_1,K_2)=\inf_\eps\{K_1\subset U_\eps(K_2)\text{ and }K_2\subset U_\eps(K_1)\},$$
where $U_\eps(K)=\bigcup_{z\in K}B(z,\eps)$ stands for an $\eps$-neighborhood of a set.
The set of all compact subsets of $X$ equipped with Hausdorff distance is a metric space which we will denote
by $\cK(X)$. If $X$ is a computable metric space, then $\cK(X)$ inherits this property; the ideal
points in $\cK(X)$ can be taken to be, for instance, finite unions of closed ideal balls in $X$. We then have the following:

\begin{prop}
\label{comp-set-1}
A set $K\Subset X$ is computable if and only if it is a computable point in $\cK(X)$.
\end{prop}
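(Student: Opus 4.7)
The plan is to handle the two implications separately, the backward one being essentially a direct translation while the forward one requires combining both computabilities with the compactness of $K$.

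For $(\Leftarrow)$, assume $(F_n)$ is a computable sequence of ideal points of $\cK(X)$, i.e.\ finite unions of closed ideal balls, with $\dist_H(F_n,K)<2^{-n}$. I would extract both approximations from the double inclusion $F_n\subset U_{2^{-n}}(K)$ and $K\subset U_{2^{-n}}(F_n)$. For lower computability, to semi-decide $B_i\cap K\neq\emptyset$ I would search over $n$ and over the closed ideal balls composing $F_n$ for a point $y\in F_n$ with $d(y,s_i)+2^{-n}<q_i$; such a $y$ certifies a point of $K$ inside $B_i$, and conversely any point of $B_i\cap K$ produces such a witness once $2^{-n}$ is small enough. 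For upper computability, I would enumerate those ideal balls $B_i$ for which $d(s_i,F_n)>q_i+2^{-n}$ for some $n$: each such $B_i$ is disjoint from $U_{2^{-n}}(F_n)\supset K$, and together they exhaust $X\setminus K$.

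For $(\Rightarrow)$, apply \propref{p.comp-closed} to obtain a uniformly computable sequence $(x_i)$ dense in $K$, and use upper computability to enumerate ideal balls $(D_j)$ with $\bigcup_j D_j = X\setminus K$. Given target accuracy $2^{-n}$, I would consider the candidate approximations
\[F_n^t=\bigcup_{i\le t}\ov B(x_i,2^{-n-2}).\]
Each constituent ball is centered in $K$, so $F_n^t\subset U_{2^{-n-1}}(K)$; density of $(x_i)$ and compactness of $K$ together guarantee that $K\subset\bigcup_{i\le t}B(x_i,2^{-n-2})$ for some finite $t$, and for such $t$ one has $\dist_H(F_n^t,K)<2^{-n}$.

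The crux is to detect such a $t$ effectively. As a preprocessing step I would combine both computabilities to produce a finite union $A$ of closed ideal balls with $K\Subset\Int(A)$. Then the coverage $K\subset\bigcup_{i\le t}B(x_i,2^{-n-2})$ becomes equivalent, by Heine--Borel applied to the compact set $A$ and the open cover $\{B(x_i,2^{-n-2}):i\le t\}\cup\{D_j\}_j$, to the existence of $j_0$ with
\[A\subset\bigcup_{i\le t}B(x_i,2^{-n-2})\;\cup\;\bigcup_{j\le j_0}D_j,\]
a condition that is semi-decidable by dovetailing on $(t,j_0)$. The main obstacle is really this preliminary bounding step: it relies on the ambient space being effectively locally compact, a property implicit in the concrete settings ($\RR^n$, $\hc$) relevant to this survey.
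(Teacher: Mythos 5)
The paper states this proposition without proof, so there is no argument to compare against; I will simply assess your proof on its own merits.

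Your argument is essentially correct, and you have put your finger on exactly the right subtlety. The $(\Leftarrow)$ direction correctly extracts both the enumeration of intersecting balls and the enumeration of the complement from the double Hausdorff inclusion. One small point to tidy: in a general metric space the quantity $d(s_i,\ov{B(s_j,q_j)})$ need not equal $\max(0,\,d(s_i,s_j)-q_j)$, so ``$d(s_i,F_n)$'' is not literally computable; one should work with the computable lower estimate $\min_j\bigl(d(s_i,s_j)-q_j\bigr)$ and argue completeness with a little more care, or simply use finite sets of ideal points as the ideal points of $\cK(X)$ rather than unions of closed balls. In $\RR^n$ or $\hat\CC$ the formula holds and this is a non-issue.

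The $(\Rightarrow)$ direction is where the real content lies, and your reduction to a Heine--Borel argument over a computably compact bounding set $A\supset K$ is the right move; the coverage criterion you give, with the complement-enumerating balls $D_j$ supplying the part of $A$ outside $K$, is exactly how one turns upper computability of $K$ into semi-decidability of the net condition. You are also right that producing a computably compact $A$ with $K\subset\Int(A)$ is a genuine hypothesis: it amounts to effective local compactness of $X$, and without it the forward implication can fail, since there is then no way to detect effectively that finitely many $B(x_i,2^{-n})$ already cover $K$. This hypothesis is indeed implicit in the paper, as all ambient spaces considered ($\RR^n$, $\hat\CC$, and hyperspaces $\cK(X)$, $\cM(X)$ of computably compact $X$) satisfy it; note also that since the proposition is non-uniform, one may simply hardcode a bound on $K$ to obtain $A$. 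A final cosmetic remark: the sets $F_n^t=\bigcup_{i\le t}\ov{B}(x_i,2^{-n-2})$ are centered at computable points $x_i$ rather than ideal points, so they are not literally ideal elements of $\cK(X)$; replacing each $x_i$ by a nearby ideal point $s_i'$ with $d(s_i',x_i)<2^{-n-3}$ fixes this at no cost to the Hausdorff estimate.
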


\begin{prop}
\label{comp-set-2}
Equivalenly, $K$ is computable if there exists an algorithm $\cA$ with a single natural input $n$, which
outputs a finite collection of closed ideal balls $\ov{B_{{1}}},\ldots, \ov{B_{i_n}}$ such that
$$\dist_H(\bigcup_{j=1}^{i_n} \ov{B_{j}}, K)<2^{-n}.$$
\end{prop}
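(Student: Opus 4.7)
The plan is to derive Proposition \ref{comp-set-2} as a direct unpacking of Proposition \ref{comp-set-1}, by spelling out what it means for $K$ to be a computable point of the Hausdorff space $\cK(X)$. According to the preceding paragraph, when $X$ is a computable metric space the ideal points of $\cK(X)$ can be taken to be the finite unions $\ov{B_{i_1}}\cup\cdots\cup\ov{B_{i_m}}$ of closed ideal balls of $X$. Before invoking the definition of a computable point, I first need to check that this choice actually makes $\cK(X)$ into a computable metric space: the collection of such finite unions is countable (and effectively enumerable from the enumerations of $\cS$ and $\QQ$), it is dense in $\cK(X)$ (any compact $K$ can be covered by finitely many small ideal balls centered at points of $\cS$ approximating finitely many points of $K$), and Hausdorff distances between two finite unions of closed ideal balls reduce to a finite minimax over distances $d(s_i,s_j)$, hence are uniformly computable reals.

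Once this is in place, the definition of a computable point in $\cK(X)$ states that $K$ is computable as an element of $\cK(X)$ if and only if there is a computable function $f:\NN\to \cS_{\cK}$ such that
\[
 \dist_H(f(n),K)<2^{-n}\text{ for all }n.
\]
Since each $f(n)$ is by construction a finite union $\ov{B_{1}}\cup\cdots\cup\ov{B_{i_n}}$ of closed ideal balls, the existence of such an $f$ is exactly the existence of the algorithm $\cA$ demanded in the statement, and conversely any such $\cA$ provides such an $f$.

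Combining this with Proposition \ref{comp-set-1}, which identifies computability of $K\Subset X$ with computability of $K$ as a point of $\cK(X)$, yields the desired equivalence. The only nontrivial step is the verification that $\cK(X)$ is a computable metric space with the indicated ideal points; the hardest piece of that is effectively computing Hausdorff distance between two finite unions of closed ideal balls, which however boils down to a double min-max of finitely many uniformly computable real numbers $d(s_i,s_j)\pm(q_i+q_j)$, and so is routine. Everything else is a direct rereading of definitions.
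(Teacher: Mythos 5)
Your overall approach matches the paper's: Proposition~\ref{comp-set-2} is stated without its own proof precisely because it is meant to be read off from Proposition~\ref{comp-set-1} once one fixes the ideal points of $\cK(X)$ to be finite unions of closed ideal balls, and your proposal does exactly that.

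The one concrete slip is in your verification that $\cK(X)$ is a computable metric space with this choice of ideal points. You assert that the Hausdorff distance between two finite unions of closed ideal balls ``boils down to a double min-max of finitely many uniformly computable real numbers $d(s_i,s_j)\pm(q_i+q_j)$.'' That formula is correct for a pair of single balls, but fails once one of the two sets has more than one component. In $\RR^2$ take $A=\ov{B(0,1)}$ and $B=\ov{B((-2,0),1/10)}\cup\ov{B((2,0),1/10)}$; then $\sup_{x\in A}\dist(x,B)$ is attained at $x=(0,\pm1)$ and equals $\sqrt{5}-1/10$, so $\dist_H(A,B)=\sqrt{5}-1/10$, which is not of the form $d(s_i,s_j)\pm(q_i+q_j)$ for any of the centers and radii in play. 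The reason is that the inner infimum in $\sup_{x\in A}\inf_{y\in B}d(x,y)$ runs over a non-convex union, so the maximizing $x$ need not lie on a line through a pair of centers. Uniform computability of $\dist_H$ between finite unions of closed ideal balls is still true, and it is what the paper tacitly relies on, but it needs a different justification: for instance, replace each closed ball by a finite rational $\eps$-net and compute the finite max--min of pairwise distances over the nets; since $x\mapsto\dist(x,B)$ is $1$-Lipschitz, this finite quantity differs from $\dist_H(A,B)$ by at most $2\eps$, and taking $\eps=2^{-k}$ produces a uniformly computable sequence of rational approximations. With this repair, the remainder of your argument goes through as written.
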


We end this section with the following computable version of compactness. 

\begin{definition}
A set $K\subseteq X$ is called \defin{computably compact} if it is compact and there exists an algorithm $\cA$ which on input $(i_{1},\ldots ,i_{p})$ halts if and only if
 $(B_{i_{1}},\ldots ,B_{i_{p}})$ is a covering of $K$.
\end{definition}

In other words, a compact set $K$ is computably compact if we can semi-decide  the inclusion $K\subset U$, uniformly from a description of the open set $U$ as a lower computable open set. It is not hard to see that when the space $X$ is computably compact, then the collection of subsets of $X$ which are computably compact coincides with the collection of upper-computable closed sets. As an example,  it is easy to see that a  singleton $\{x\}$ is a computably compact set if and only if  $x$ is a computable point.

\subsection{Computability of probability measures}

Let $\M(X)$ denote the set of Borel probability measures over a metric space $X$, which we will assume to be endowed with a computable structure for which it is computably compact. We recall the notion of
weak convergence of measures:

\begin{definition}
A sequence of measures $\mu _{n}\in \M(X)$ is said to be \defin{weakly convergent} to $\mu\in \M(X) $ if $\int f d\mu_{n}\rightarrow \int f d\mu$ for each $f\in C_{0}(X)$. 
\end{definition}

 It is well-known, that when  $X$ is a compact separable and complete metric space, then so is $\M(X)$. In this case, weak convergence on $\M(X)$ is compatible with the notion of \defin{Wasserstein-Kantorovich distance}, defined by:

\begin{equation*}
W_{1}(\mu,\nu)=\underset{f\in 1\text{-Lip}(X)}{\sup}\left|\int f d\mu-\int f d\nu\right|
\end{equation*}
\noindent where $1\mbox{-Lip}(X)$ is the space of Lipschitz functions on $X$, having Lipschitz constant less than one.

  The following result (see \cite{HoyRoj07}) says that the computable metric structure on $X$ is inherited by $\M(X)$.  

\begin{proposition}
Let $\cD$ be the set of finite convex rational combinations of Dirac measures supported on ideal points of the computable metric space $X$. Then the triple $(\M(X),W_{1}, \cD)$ is a computable metric space. 
\end{proposition}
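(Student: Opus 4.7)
The plan is to verify the three axioms defining a computable metric space for the triple $(\M(X), W_1, \cD)$: (1) $(\M(X), W_1)$ is a separable metric space, (2) $\cD$ admits an effective enumeration, and (3) $W_1$ is computable uniformly on $\cD \times \cD$.

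\textbf{Enumeration and density of $\cD$.} A point of $\cD$ is encoded by a finite tuple $((q_1, n_1), \ldots, (q_k, n_k))$ with $q_i \in \QQ_{\geq 0}$, $\sum q_i = 1$, and $s_{n_i} \in \cS$. Since each of these conditions is decidable, this yields an effective enumeration. For density, given $\mu \in \M(X)$ and $\eps > 0$, I would use computable compactness of $X$ to produce ideal points $s_{n_1}, \ldots, s_{n_N}$ whose $\eps$-balls cover $X$, and then disjointify into Borel pieces $A_i \subseteq B(s_{n_i}, \eps)$. The atomic measure $\sum_i \mu(A_i)\,\delta_{s_{n_i}}$ has Wasserstein distance at most $\eps$ from $\mu$ via the transport plan that collapses each $A_i$ onto $s_{n_i}$; a final rationalization of the weights produces an element of $\cD$ within $2\eps$ of $\mu$.

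\textbf{Computability of $W_1$ on $\cD \times \cD$.} Given $\mu = \sum_i a_i \delta_{x_i}$ and $\nu = \sum_j b_j \delta_{y_j}$ in $\cD$, Kantorovich--Rubinstein duality identifies $W_1(\mu,\nu)$ with the value of the linear program
\begin{equation*}
\min_{\pi \geq 0}\ \sum_{i,j} \pi_{ij}\, d(x_i, y_j) \quad \text{subject to} \quad \sum_j \pi_{ij} = a_i,\ \sum_i \pi_{ij} = b_j.
\end{equation*}
The feasible polytope is cut out by \emph{rational} constraints (independent of the metric on $X$), and since every feasible $\pi$ has total mass $1$, replacing each $d(x_i, y_j)$ by a rational $\tilde d_{ij}$ with $|\tilde d_{ij} - d(x_i, y_j)| < 2^{-n-1}$ alters the LP value by at most $2^{-n-1}$. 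To approximate $W_1(\mu, \nu)$ to precision $2^{-n}$, I would compute each $d(x_i, y_j)$ to precision $2^{-n-1}$ (possible by condition (3) in the definition of a computable metric space) and then solve the resulting rational linear program exactly, for instance by the simplex method with rational arithmetic. This gives a uniform algorithm in the indices of $\mu$ and $\nu$.

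\textbf{Main obstacle.} The crux is step (3): turning the analytic definition of $W_1$ (a supremum over $1$-Lipschitz test functions, which is not obviously algorithmically tractable) into a finite computation. Kantorovich duality is the essential bridge: on $\cD$ it reduces the problem to a linear program whose combinatorial skeleton is purely rational and whose numerical coefficients are computable reals. The unit-mass constraint $\sum \pi_{ij} = 1$ then yields a free $1$-Lipschitz dependence of the optimum on the cost matrix, which is precisely what is needed to propagate computability from $d$ to $W_1$ uniformly in the pair of ideal measures.
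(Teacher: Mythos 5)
Your proof is correct and uses the standard argument. The paper itself does not spell out a proof but defers to \cite{HoyRoj07}, where the key step is exactly the one you identify: on finitely supported rational measures the Kantorovich--Rubinstein dual formulation collapses to a finite-dimensional transport linear program whose feasible polytope is rational, and the unit total-mass constraint makes the optimal value $1$-Lipschitz in the cost matrix, so one can substitute rational approximations $\tilde d_{ij}$ of $d(x_i,y_j)$ and solve the resulting rational LP exactly (e.g.\ by simplex). Two small points worth tightening if you wrote this up in full: (a) for the rationalization of weights in the density step you should note that $X$ is assumed (computably) compact, hence of finite diameter, so a small perturbation of the weight vector moves you only a proportionally small $W_1$-distance; and (b) density of $\cD$ itself is a purely classical fact, so you may invoke ordinary total boundedness of $X$ rather than \emph{computable} compactness there -- the effectivity is only needed in the distance-computability step, which you handle correctly.
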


\begin{defn}
A \defin{computable measure} is a computable point in $(\M(X),W_{1}, \cD)$. That is, it is a measure which can be algorithmically approximated in the weak sense by \emph{discrete} measures with any given precision.  
\end{defn}

The following proposition (see \cite{HoyRoj07}) brings the previous definition to a more familiar setting. 

\begin{proposition} A probability measure $\mu$ on $X$ is computable if and only if one can uniformly compute integrals  of computable functions.  
\end{proposition}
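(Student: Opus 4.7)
The plan is to establish both directions by leveraging the Kantorovich--Rubinstein duality baked into the definition of $W_1$, namely the bound $\left|\int f\,d\mu - \int f\,d\nu\right| \leq L\cdot W_1(\mu,\nu)$ for every $L$-Lipschitz function $f$, together with the observation that for an ideal measure $\nu = \sum_i \alpha_i \delta_{s_i}\in\cD$ (rational weights $\alpha_i$, ideal points $s_i$) the integral $\int f\,d\nu = \sum_i \alpha_i f(s_i)$ is a finite rational-weighted sum of values of $f$ at ideal points, and hence computable from $f$ and $\nu$.

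For the forward direction, I would assume $\mu$ is computable and fix a computable $f\colon X\to\RR$. Since $X$ is computably compact, $f$ comes with a computable modulus of continuity and a uniform bound. The key step is to effectively regularize $f$ by inf-convolution
$$f_L(x) = \inf_{y\in X}\{f(y) + L\cdot d(x,y)\},$$
which is $L$-Lipschitz, computable (the infimum is over a computably compact set), and satisfies $\|f-f_L\|_\infty\to 0$ at a rate controlled effectively by the modulus of $f$. Given the target precision $\eps$, choose $L=L(\eps)$ so that $\|f-f_L\|_\infty<\eps/2$, then use the computability of $\mu$ to produce $\nu\in\cD$ with $W_1(\mu,\nu)<\eps/(2L)$. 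The triangle inequality gives $\bigl|\int f\,d\mu - \int f_L\,d\nu\bigr|<\eps$, and the right-hand integral is a computable finite sum.

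For the converse, assume the integration functional $f\mapsto \int f\,d\mu$ is uniformly computable on computable inputs; I would produce the approximating discrete measures in $\cD$ by effective quadrature. Given $n$, use computable compactness of $X$ to cover it by finitely many ideal balls $B(s_1,r),\ldots,B(s_k,r)$ with $r$ much smaller than $2^{-n}/(1+\diam X)$. From the tent functions $\psi_i(x)=\max\{0,\,r-d(x,s_i)\}$ one effectively builds a computable Lipschitz partition of unity $\{\phi_i\}$ subordinate to the cover. Using the hypothesis, compute nonnegative rationals $q_i$ approximating $\int\phi_i\,d\mu$ and then correct the rounding so that $\sum_i q_i = 1$ exactly while $\sum_i\bigl|q_i-\int\phi_i\,d\mu\bigr|$ remains small. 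Set $\nu_n=\sum_i q_i\delta_{s_i}\in\cD$. Testing against an arbitrary $1$-Lipschitz function $g$ (which one may assume vanishes at some $s_i$, so $\|g\|_\infty\leq\diam X$) and exploiting $\sum_i\phi_i\equiv 1$ together with the fact that each $\phi_i$ is supported in $B(s_i,r)$, one bounds $W_1(\mu,\nu_n)$ by $r$ plus a term proportional to the rounding error, which can be forced below $2^{-n}$.

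The main technical obstacle is in the converse direction: effectively building a computable Lipschitz partition of unity from the cover, and, more delicately, rounding the weights $q_i$ to rationals that sum to exactly $1$ while preserving the Wasserstein estimate, so that the resulting $\nu_n$ genuinely lies in the set of ideal points $\cD$. The forward direction is cleaner, with the only subtle point being that both the inf-convolution and its effective uniform convergence rely crucially on the computable compactness of $X$.
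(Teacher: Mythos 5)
The paper itself gives no proof of this proposition --- it defers to \cite{HoyRoj07} --- so I can only assess your sketch on its own terms. The overall strategy is the right and natural one: in the forward direction, use the duality bound $|\int f\,d\mu-\int f\,d\nu|\le L\cdot W_1(\mu,\nu)$ together with effective Lipschitz regularization of $f$ by inf-convolution, and in the converse, approximate $\mu$ in $W_1$ by rational convex combinations of Diracs whose weights are read off by testing the integration oracle against a computable Lipschitz partition of unity. The forward direction is correct as written, granting the standard fact that on a computably compact space a computable function comes with a computable modulus of continuity and a computable bound; those make $f_L$ computable and drive $\|f-f_L\|_\infty$ to zero at an effective rate, after which $\int f_L\,d\nu$ for $\nu\in\cD$ is a finite rational-weighted sum of computable reals.

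There is one real slip in the converse that needs a small repair before the argument closes. If you cover $X$ by ideal balls $B(s_i,r)$ and build tent functions $\psi_i(x)=\max\{0,\,r-d(x,s_i)\}$ of the \emph{same} radius $r$, then $\sum_i\psi_i$ has no computable positive lower bound --- a point lying barely inside exactly one ball makes the sum arbitrarily small --- and consequently $\phi_i=\psi_i/\sum_j\psi_j$ is not an effectively evaluable function, so it cannot be handed to the integration oracle. The standard fix: use computable compactness of $X$ to extract a finite cover by balls $B(s_i,r/2)$, but form the tents with radius $r$; then $\sum_j\psi_j\ge r/2$ everywhere, and each $\phi_i$ is a computable Lipschitz function still supported in $B(s_i,r)$. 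With that correction the rest of your estimate goes through: translate the test $1$-Lipschitz $g$ so that $g(s_1)=0$ and hence $\|g\|_\infty\le\diam X$, split $\int g\,d\mu-\sum_i q_i g(s_i)$ into an approximation term $\sum_i\int(g-g(s_i))\phi_i\,d\mu$ bounded by $r$ and a rounding term bounded by $\diam X\cdot\sum_i|q_i-\int\phi_i\,d\mu|$, truncate the raw rational weights at $0$ and normalize their sum to $1$ --- all of which keeps $W_1(\mu,\nu_n)<2^{-n}$ and yields a genuine element of $\cD$.
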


We also note (see e.g. \cite{GalHoyRoj09}).

\begin{proposition}
\label{comp-supp}
The support of a computable measure is a lower-computable set. 
\end{proposition}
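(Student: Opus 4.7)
The plan is to unpack the definition of lower-computability and reduce the problem to semi-deciding positivity of $\mu$ on ideal balls.

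First, I would record the topological fact that for an open set $U \subseteq X$, one has $\supp(\mu)\cap U \neq \emptyset$ if and only if $\mu(U) > 0$. The forward direction is immediate: if $x \in \supp(\mu)\cap U$, then $U$ is an open neighbourhood of $x$, so by definition of the support $\mu(U) > 0$. The reverse direction: if $\mu(U) > 0$, then $U$ cannot be contained in the (open) complement of $\supp(\mu)$, which has measure zero, so $U$ must meet $\supp(\mu)$. Applied with $U = B_i$ ranging over ideal balls, the task of semi-deciding $\supp(\mu)\cap B_i \neq \emptyset$ becomes the task of semi-deciding $\mu(B_i) > 0$ uniformly in $i$.

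Next, I would use the characterization from the preceding proposition that computability of $\mu$ is equivalent to the ability to uniformly compute integrals of computable functions. Given an ideal ball $B_i = B(s,q)$, I would construct an explicit increasing sequence of uniformly computable continuous functions $f_{i,n}\colon X \to [0,1]$ supported in $B_i$ that converge pointwise to $\mathbbm{1}_{B_i}$. A concrete choice is
\begin{equation*}
f_{i,n}(x) \;=\; \min\bigl(1,\; n\cdot\max(0,\, q - d(x,s))\bigr),
\end{equation*}
which is Lipschitz (hence continuous), vanishes outside $B_i$, and increases to $\mathbbm{1}_{B_i}$ as $n\to\infty$. Each $f_{i,n}$ is computable uniformly in $(i,n)$ because $s$ is ideal, $q$ is rational, and $d$ is a computable metric.

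By computability of $\mu$, the real numbers $a_{i,n} := \int f_{i,n}\,d\mu$ are computable uniformly in $(i,n)$, and by monotone convergence $a_{i,n} \nearrow \mu(B_i)$. Hence $\mu(B_i) > 0$ if and only if there exists $n$ with $a_{i,n} > 0$, and this is semi-decidable: on input $i$, enumerate $n = 1, 2, \ldots$, for each $n$ compute $a_{i,n}$ to progressively better precision, and halt as soon as a strictly positive lower bound is certified. Combining this with the first step gives the algorithm witnessing lower-computability of $\supp(\mu)$. The only mild obstacle is verifying that the $f_{i,n}$ are genuinely uniformly computable as elements of $C_0(X)$; but since they are given by a simple closed-form expression in the ideal data $(s, q, n)$ and the computable metric $d$, this is essentially routine.
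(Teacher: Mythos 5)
Your argument is correct and is essentially the standard proof (the paper simply cites \cite{GalHoyRoj09} without reproducing it). Reducing lower-computability of $\supp(\mu)$ to semi-deciding $\mu(B_i)>0$, approximating $\mathbbm{1}_{B_i}$ from below by uniformly computable Lipschitz bump functions built from the ideal data $(s,q)$ and the computable metric, and then dovetailing over $n$ and the precision to which the computable reals $a_{i,n}=\int f_{i,n}\,d\mu$ are evaluated, is exactly the route taken in the reference.
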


\noindent
As examples of computable measures we mention Lebesgue measure in $[0,1]^{n}$, or any smooth measure in $[0,1]^n$
with a computable density function.


\subsection{Time complexity of a problem.} For an algorithm $\cA$ with input $w$ the {\it running time}  is the number
of steps $\cA$ makes before terminating with an output.
The {\it size} of an input $w$ is the number of dyadic bits required to specify $w$. Thus for $w\in\NN$, the size of $w$ is
the smallest integer  $l(w)\geq\log_2 w$.
The {\it running time of $\cA$} is
the function
$$T_\cA :\NN \to\NN$$
 such that
$$T_\cA(n) = \max\{\text{the running time of }\cA(w)\text{ for inputs } w \text{ of size }n\}.$$
In other words, $T_\cA(n)$ is the worst case running
time for inputs of size $n$.
For a computable function $f : \NN\to \NN$  the time complexity of $f$ is said to have an
upper bound $T(n)$ if there exists an algorithm $\cA$ with running time bounded
by $T(n)$ that computes $f$. We say that the time complexity of $f$ has a lower bound $T(n)$ if for every algorithm $\cA$ which computes $f$,
there is a subsequence $n_k$ such that the running time $$T_\cA(n_k)>T(n_k).$$

\subsection{Computational complexity of two-dimensional images.}

Intuitively, we say the time complexity of 
a set $S\subset\RR^2$ is $t(n)$ if it takes time $t(n)$ to decide whether to draw a pixel of size $2^{-n}$ 
in the picture of $S$.  Mathematically, the definition is as follows:

\begin{defn}
\label{def1}
A set $T$ is said to be a $2^{-n}$-picture of a bounded set $S\subset\RR^2$ if:
\begin{itemize}
\item[(i)] $S \subset T$, and 
\item[(ii)] $T \subset B(S,2^{-n}) =
\{ x \in \R^2~:~|x-s|<2^{-n}~\mbox{for some~} s\in S\}$.
\end{itemize}
\end{defn}

\noindent
Definition \ref{def1} means that $T$ is a $2^{-n}$-approximation of $S$ with respect to 
the {\em Hausdorff metric}, given by
$$
d_H (S,T) := \inf \{ r : S \subset B(T,r)~~\mbox{and}~~
T \subset B(S,r) \}.
$$

Suppose we are trying to generate a  picture of a set $S$ using 
a union of round pixels of radius $2^{-n}$ with centers at all 
the points of the form $\left( \frac{i}{2^{n}}, 
\frac{j}{2^{n}}\right)$, with $i$ and $j$ integers. In order to 
draw the picture, we have to decide for each pair $(i,j)$ whether to 
draw the pixel centered at $\left( \frac{i}{2^{n}},
\frac{j}{2^{n}}\right)$ or not. We want to draw the pixel if 
it intersects $S$ and to omit it if some neighborhood of the pixel 
does not intersect $S$. Formally, we want to compute a function 
\beq
\label{star1}
f_S ( n, i/2^{n}, j/2^{n}) = 
\left\{ 
\begin{array}{ll}
1, & B((i/2^{n}, j/2^{n}), 2^{-n})\cap S \neq \emptyset \\
0, & B((i/2^{n}, j/2^{n}), 2 \cdot 2^{-n})\cap S = \emptyset  \\
0~~\mbox{or}~~1,~~& \mbox{in all other cases}
\end{array}
\right. 
\eeq


The time complexity of  $S$ is defined as follows.

\begin{defn}
\label{defcomp}
A bounded set $S$ is said to be computable in time $t(n)$ if there 
is 
a function $f(n,\cdot)$ satisfying \eqref{star1} which runs in time 
$t(n)$. We say that $S$ is poly-time computable if there is a polynomial 
$p$, such that $S$ is computable in time $p(n)$. 
\end{defn} 

Computability of sets in bounded space is defined in a 
similar manner. There, the amount of {\em memory} the 
machine is allowed to use is restricted. 

To see why this is the ``right" definition, suppose we are trying to 
draw a set $S$ on a computer screen which has a $1000\times 1000$ pixel
resolution. A $2^{-n}$-zoomed in picture of $S$ has $O(2^{2 n})$ pixels
of size $2^{-n}$, and thus would take time $O(t(n) \cdot 2^{2 n})$ to 
compute. This quantity is exponential in $n$, even if $t(n)$ is bounded 
by a polynomial. But we are drawing $S$ on a finite-resolution display, 
and we will only need to draw $1000 \cdot 1000 = 10^6$ pixels. Hence 
the running time would be $O(10^6 \cdot t(n)) = O(t(n))$. This running 
time is polynomial in $n$ if and only if $t(n)$ is polynomial. 
Hence $t(n)$ reflects the `true' cost of zooming in. 

\section{Computability and complexity of Conformal Mappings}
Let $\DD=\{|z|<1\}\subset\CC$. The celebrated Riemann Mapping Theorem asserts:

\begin{riemann-mapping-theorem}
Suppose $ W\subsetneq\CC$ is a simply-connected domain in the complex plane, and let $w$ be an arbitrary
point in $W$. Then there exists a unique conformal mapping
$$f:\DD\to W,\text{ such that }f(0)=w\text{ and }f'(0)>0.$$
\end{riemann-mapping-theorem}

\noindent
The inverse mapping,
$$\varphi\equiv f^{-1}: W\to\DD$$
is called the Riemann mapping of the domain $ W$ with base point $w$. The first complete proof of Riemann Mapping Theorem
was given by Osgood \cite{osgood} in 1900.
The first constructive proof of the Riemann Uniformization Theorem is due to Koebe \cite{Koebe},
and dates to the early 1900's. Formal proofs of the constructive nature of the Theorem
which follow Koebe's argument 
under various computability conditions on the boundary of the domain are numerous in the
literature (see e.g. \cite{Cheng,BB,Zhou,Hertling}).

The following theorem, due to Hertling  \cite{Hertling}, characterises the information required from the domain $W$ in order to compute the Riemann map. 

\begin{thm}
\label{comp-riem-map}
Let $W\subsetneq \CC$ be a simply-connected domain. Then, the following are equivalent:
\begin{itemize}
\item[(i)] $W$ is a lower-computable open set, $\partial W$ is a lower-computable closed set,
and $w\in W$ be a computable point;
\item[(ii)] The conformal bijection $$f:\DD\to W, \, f(0)=w,\;f'(0)>0;\,\, \text{and its inverse}\,\, \varphi\equiv f^{-1}: W\to\DD, $$
are both computable. 
\end{itemize}
\end{thm}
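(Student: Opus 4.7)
The plan is to establish the two implications separately. The reverse direction $(ii) \Rightarrow (i)$ is essentially routine, extracting the geometric computability data from the computable conformal maps. The forward direction $(i) \Rightarrow (ii)$ is the substantive one, implementing an effective Koebe-type iterative construction of the Riemann map.

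For $(ii) \Rightarrow (i)$: The computability of $w$ is immediate, since $w = f(0)$ and $f$ is computable at the ideal point $0 \in \DD$. Lower-computability of $W$ is obtained by exhausting $W$ as $\bigcup_{(z,r)} f(B(z,r))$, with $z$ ideal in $\DD$ and $r$ rational such that $\ov{B(z,r)} \subset \DD$; the Koebe one-quarter theorem applied to the affine normalization of $f\vert_{B(z,r)}$ guarantees that each such image contains an explicit ideal ball of radius $|f'(z)|\,r/4$ centered at $f(z)$, giving an algorithmic enumeration of ideal balls lying in $W$. For lower-computability of $\partial W$, I would use \propref{p.comp-closed} to reduce to producing a uniformly computable dense sequence in $\partial W$. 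The key identity is that $q_n \to p$ with $q_n \in W$ implies $p \in \partial W$ if and only if $|\varphi(q_n)| \to 1$, since otherwise a subsequential limit of $\varphi(q_n)$ would lie inside $\DD$ and force $p = f(\lim \varphi(q_n)) \in W$. For each ideal ball $B_i$, one searches over computable points $z_n \in \DD$ with $|z_n| \to 1$ and $f(z_n)$ forming a Cauchy sequence inside a compactly contained subball; the limit is a computable point of $\partial W$ inside $B_i$, and varying $B_i$ gives the required dense sequence. The computable Koebe distortion estimate $\dist(f(z), \partial W) \le (1-|z|^2)|f'(z)|$ controls the rates.

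For $(i) \Rightarrow (ii)$: The plan is to construct $\varphi$ through an effective Koebe iteration. Starting from an initial normalization $\varphi_0 : W \to \Omega_0 \subseteq \DD$ with $\varphi_0(w) = 0$ and $\varphi_0'(w) > 0$, written down from a computable lower bound on $\dist(w, \partial W)$ (derived from $w$ being a computable interior point and $\partial W$ being lower-computable), I build a sequence of computable conformal embeddings $\varphi_n : W \hookrightarrow \Omega_n \subseteq \DD$. At stage $n$, I locate a boundary point $\alpha_n \in \partial \Omega_n \cap \DD$ algorithmically: lower-computability of $\partial W$ allows enumeration of accessible boundary points of $W$, which pull back through $\varphi_n^{-1}$ to points of $\partial \Omega_n \cap \DD$. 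I then post-compose with the Koebe map $K_{\alpha_n}$ --- the standard composition of Möbius transformations and a square-root branch that sends $\alpha_n$ onto $\partial \DD$ while fixing $0$ --- to obtain $\varphi_{n+1} = K_{\alpha_n} \circ \varphi_n$, whose image $\Omega_{n+1}$ properly contains $\Omega_n$. The Schwarz lemma forces $|\varphi_n'(w)|$ to be strictly increasing, and by the Koebe one-quarter theorem it is bounded above by a computable multiple of $\dist(w, \partial W)^{-1}$. Carathéodory's kernel convergence theorem then yields $\varphi_n \to \varphi$ and $\varphi_n^{-1} \to f$ uniformly on compact subsets of $W$ and $\DD$ respectively.

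The principal obstacle is converting this qualitative convergence into an effective algorithm. Without an a priori modulus of continuity for $\partial W$, the rate of convergence of Koebe's iteration cannot be bounded in terms of elementary geometric quantities alone. The key observation is that at each stage $n$ the quantity $|\varphi_n'(w)|$ is itself computable, and one has a computable upper bound $M$ for its limit $|\varphi'(w)|$ (from the Koebe one-quarter theorem applied with the available lower bound on $\dist(w, \partial W)$). Given a target precision $2^{-k}$, I would run the iteration until the increments $|\varphi_{n+1}'(w)| - |\varphi_n'(w)|$ fall below a threshold derived from $M$; compactness of normal families translates this stabilization into uniform closeness of $\varphi_n$ to $\varphi$ on any prescribed compact subset of $W$, allowing evaluation of $\varphi$ --- and by inversion of the explicit $\varphi_n$'s, of $f$ --- at any given computable point to the required precision.
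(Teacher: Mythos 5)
The paper does not prove Theorem~\ref{comp-riem-map}; it is stated and attributed to Hertling~\cite{Hertling}, so there is no in-text argument to compare against. On its own terms, your sketch of $(ii)\Rightarrow(i)$ is sound, and the Koebe bound $\dist(f(z),\partial W)\le(1-|z|^2)|f'(z)|$ is exactly the right tool --- in fact it gives a shorter route than manufacturing Cauchy sequences of computable boundary points: for an ideal ball $B(s,q)$, the condition that there exists an ideal $z\in\DD$ with $|f(z)-s|+(1-|z|^2)|f'(z)|<q$ is semi-decidable and is readily checked to be equivalent to $B(s,q)\cap\partial W\neq\emptyset$, which yields lower-computability of $\partial W$ directly from Definition~\ref{d.upper.comp}.

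The forward direction $(i)\Rightarrow(ii)$ has a genuine gap in the stopping rule. Halting the Koebe iteration ``when the increments $|\varphi_{n+1}'(w)|-|\varphi_n'(w)|$ fall below a threshold'' is not, by itself, a valid criterion: a bounded monotone sequence can have arbitrarily small increments at every stage and still remain far from its limit, and ``compactness of normal families'' is a purely qualitative statement that supplies no modulus. What actually makes increments informative is that the $n$-th increment equals $\bigl(K_{\alpha_n}'(0)-1\bigr)\,|\varphi_n'(w)|$ with $K_{\alpha_n}'(0)-1=\frac{(1-\sqrt{|\alpha_n|})^2}{2\sqrt{|\alpha_n|}}$, so a small increment forces $|\alpha_n|$ close to $1$; combined with the Schwarz-lemma estimate $|\varphi_n'(w)|/|\varphi'(w)|\ge\dist(0,\partial\Omega_n)$ this certifies proximity to the limit \emph{only if} $|\alpha_n|$ is comparable to $\dist(0,\partial\Omega_n)$, i.e.\ only if $\alpha_n$ is (approximately) the point of $\DD\setminus\Omega_n$ nearest the origin. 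You do not explain how to locate such an $\alpha_n$ from the data in $(i)$: the step ``lower-computability of $\partial W$ allows enumeration of accessible boundary points, which pull back through $\varphi_n^{-1}$ to points of $\partial\Omega_n\cap\DD$'' is not effective as stated, since radial or access-path limits of $\varphi_n$ need not converge at any computable rate, and identifying a nearly nearest point of $\partial\Omega_n$ requires two-sided information on $\partial\Omega_n$ that must be extracted with care from lower-computability of $W$ and of $\partial W$ jointly. This is precisely where the hypotheses of $(i)$ have to do real work, and that part of the argument is missing.
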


We now move to discussing the computational complexity of computing the Riemann map.
For this analysis we assume the domain $W$ is computable.

For $w\in W$ as above, the quantity
$1/|\varphi'(w)|$ is called the {\it conformal radius} of $W$ at $w$.
In \cite{BBY3} it was shown that even if the domain we are uniformizing 
is very simple computationally, the complexity of the uniformization
can be quite high. In fact, it might already be difficult to compute the 
conformal radius of the domain:
\begin{thm}
\label{thm:lb}
Suppose there is an algorithm $A$ that given a simply-connected domain
$W$ with a  linear-time computable boundary and an inner radius $>\frac{1}{2}$ and a number $n$ computes
the first $20n$ digits of the
conformal radius $r(W,0)$, then we can use one call to $A$ to solve any
instance of a $\#\text{\it SAT}(n)$ with a linear time overhead. 

In other words, $\ccl{\#P}$ is poly-time reducible to computing the conformal
radius of a set. 
\end{thm}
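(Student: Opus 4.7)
The plan is to follow the strategy of Binder--Braverman--Yampolsky and encode a $\#\text{\it SAT}$ instance $\phi$ on $n$ variables into a simply-connected domain $W_\phi$ whose boundary is linear-time computable uniformly in $n$, whose inner radius at $0$ exceeds $1/2$, and whose conformal radius at $0$ stores the value of $\#\phi$ in a designated block of binary digits. Once such an encoding is in place, one call to $A$ on $W_\phi$ returns enough digits of $r(W_\phi,0)$ that $\#\phi$ can be recovered by a linear-time post-processing, which is exactly the reduction claimed.

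For the construction I would start from $\DD$ and, for each $\sigma\in\{0,1\}^n$ with $\phi(\sigma)=1$, glue a microscopic inward ``bump'' to the unit circle at angle $\theta_\sigma = 2\pi\sigma/2^n$, where $\sigma$ is identified with its numerical value. All bumps are rigid copies of a single fixed template, scaled by a common parameter $\delta = 2^{-Kn}$ for a large constant $K$ to be fixed. The essential feature of this design is that contributions of distinct bumps to the conformal radius are additive up to an exponentially small error. Using Hadamard's variational formula, or equivalently a one-sided radial Loewner chain, each bump reduces $r(W_\phi,0)$ by a computable amount $c\,\delta + O(\delta^2)$ for an absolute constant $c$, so that
$$
r(W_\phi,0) \;=\; 1 - c\,\delta\cdot\#\phi \;+\; O(2^{-Cn}),
$$
with $C\gg 20$ once $K$ is chosen large enough relative to the Koebe distortion constant governing pairwise bump interactions.

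Because $\#\phi\le 2^n$, the quantity $c\,\delta\cdot\#\phi$ lives in a binary window of roughly $Kn$ bits that does not overlap the error term, so the first $20n$ bits of $r(W_\phi,0)$ supplied by $A$ determine $\#\phi$ uniquely. The inner-radius bound $>1/2$ is immediate since $W_\phi$ differs from $\DD$ only by perturbations of scale $2^{-Kn}\ll 1$. The boundary is linear-time computable because deciding whether a dyadic pixel of scale $2^{-m}$ meets $\partial W_\phi$ reduces to reading its angular coordinate, extracting the unique candidate string $\sigma$ whose angle $\theta_\sigma$ is within range, and evaluating $\phi(\sigma)$; each of these costs linear time in $|\phi|$, and the single template bump has constant complexity.

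The main obstacle is controlling the nonlinear interaction between bumps precisely enough that the linear, $\#\phi$-carrying term remains isolated in the $20n$-digit window. Concretely, one must show that the cumulative second-order contribution from all $O(4^n)$ pairs of bumps is dominated by $2^{-Cn}$ with $C>20$. The standard tool is a Koebe-type distortion estimate for the slit mapping of an individual bump combined with a Beurling projection argument bounding the harmonic measure in $\DD$ of a small arc as seen from $0$; this yields a quantitative decoupling between well-separated bumps and forces $K$ to be taken sufficiently large. This estimate is the technical heart of the argument in \cite{BBY3} and carries over verbatim. The remaining point, that the choice of $K$ is compatible with a linear-time boundary oracle, follows because the bump positions are dyadic rationals of bit length $O(n)$ and the template scale $2^{-Kn}$ is specified by a single integer $Kn$ that the oracle computes in linear time.
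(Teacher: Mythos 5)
Your high-level plan — place small inward boundary deformations at angles indexed by satisfying assignments, so that the conformal radius drops by an amount proportional to $\#\phi$, and decode $\#\phi$ from a designated block of digits — is indeed the strategy of \cite{BBY3}. But the quantitative backbone of the sketch is wrong. By Hadamard's variational formula, $\delta(\log r)= -\frac{1}{2\pi}\int_{\partial\DD}\nu\,ds$, the first-order drop in $\log r$ due to an inward perturbation is proportional to the \emph{area} the boundary sweeps, which for a bump of linear scale $\delta$ is $\Theta(\delta^2)$, not $\Theta(\delta)$. As a sanity check, $\DD$ minus the radial slit $[1-\ell,1]$ has conformal radius exactly $1-\bigl(\tfrac{\ell}{2-\ell}\bigr)^2 = 1-\tfrac{\ell^2}{4}+O(\ell^3)$. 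So your central display should read $r(W_\phi,0)=1-c\,\delta^2\cdot\#\phi+\text{error}$, the $\#\phi$-carrying window sits near bit $2Kn$ rather than $Kn$, and the compatibility between the hard-coded constant $20$, the choice of $K$, and the error term has to be reworked with this correction (roughly $K\le 10$ so the $20n$-digit window sees the signal, with a positive lower bound on $K$ from the interaction estimate — it still fits, but ``take $K$ large enough'' is not an option).

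Beyond that, the two load-bearing steps are asserted rather than proved. Controlling the cumulative interaction of $O(4^n)$ pairs of bumps to below the per-bump resolution $\delta^2$ is exactly the hard estimate of \cite{BBY3}; invoking ``Koebe distortion'' and ``Beurling projection'' as black boxes is not a substitute, and conflates two different kinds of bounds (distortion of the conjugating slit map at the location of another bump versus a harmonic-measure estimate for an arc). And ``linear-time computable boundary'' is a genuine constraint: deciding whether a dyadic pixel meets $\partial W_\phi$ by ``reading its angular coordinate'' requires computing an arctangent to precision $m$, which is not linear-time on a Turing machine — one must instead arrange the teeth along a dyadic, piecewise-linear arc so that the oracle performs only additions and comparisons. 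None of this is fatal and you have correctly located where the work lies, but as written the proposal reports the shape of the argument rather than supplying it, and the one estimate it does state explicitly is off by a power of $\delta$.
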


\begin{figure}[ht]
\centerline{\includegraphics[width=0.9\textwidth]{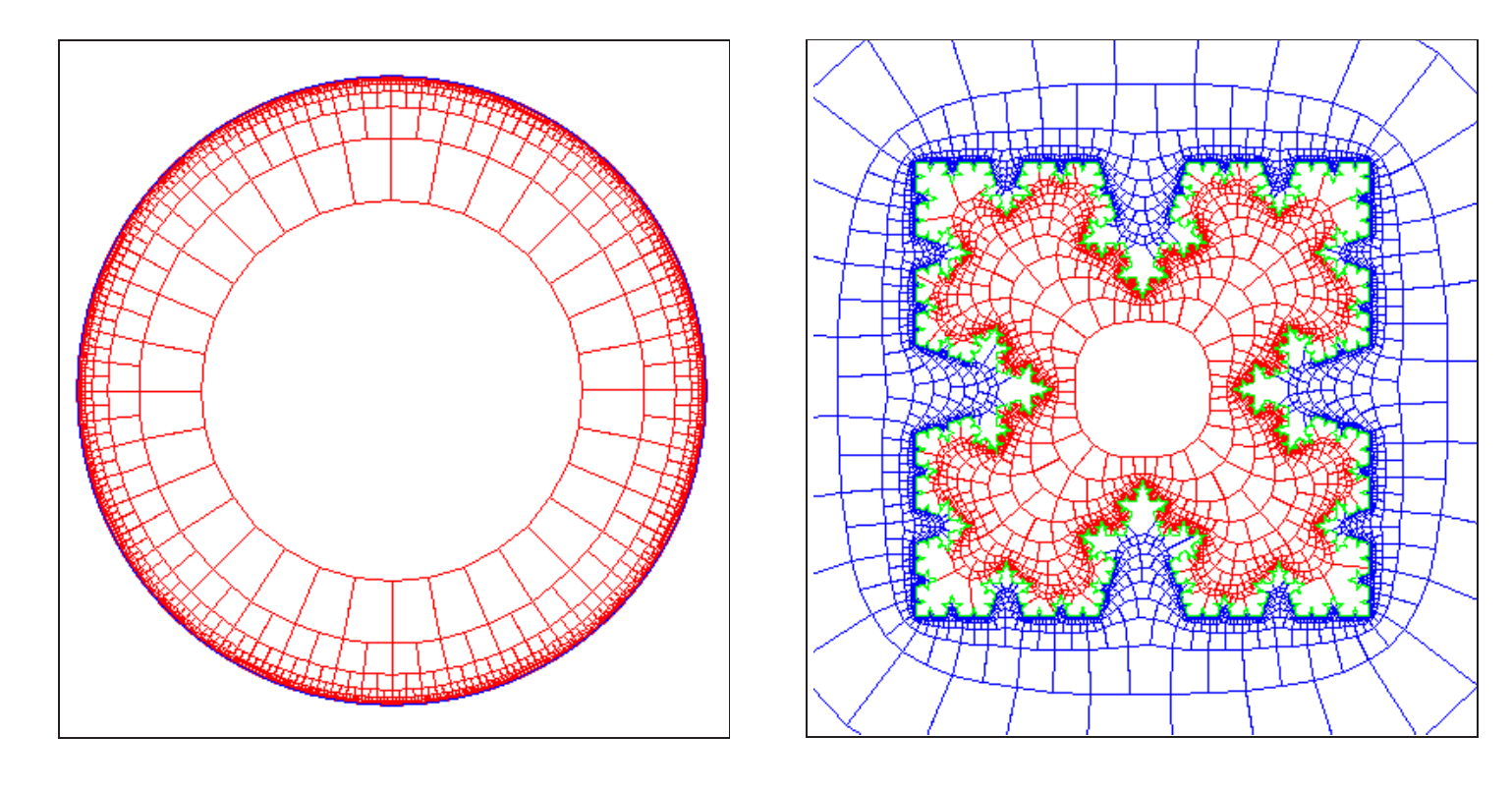}}
\caption{\label{fig-zipper}An example of the work of ``Zipper''. On the left: a standard Carleson grid on the disk. On the right: the image of the Carleson grid by the conformal map to the interior of the inverted snowflake and the image by the conformal map to the exterior of the snowflake. Both images taken from D.~Marshall's web page.
}
\end{figure}

Rettinger in \cite{Ret1} showed that this complexity bound is sharp:
\begin{thm}
  \label{riem-up}
There is an algorithm $A$ that computes the uniformizing map in
the following sense.

Let  $W$ be a bounded simply-connected domain, and $w \in W$.
Assume that the boundary $\partial W$ is gien to $A$ by an oracle (that is, $A$ can query an oracle for a function of the form (\eqref{star1})). Let $A$ also have an oracle access to $w$, and to another point $z\in W$. 
Then A computes $\varphi(z)$ with precision $n$ with complexity $\ccl{\#P}(n)$.

\end{thm}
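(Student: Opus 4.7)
The plan is to reduce the computation of $\varphi(z)$ to the evaluation of the Green's function $G_W(\,\cdot\,,w)$ of $W$ with pole at $w$, and then to express that function probabilistically in a form amenable to $\ccl{\#P}$ counting. Writing $\log\varphi(z)=-G_W(z,w)-iG_W^*(z,w)$, where $G_W^*$ is the harmonic conjugate of $G_W$ along paths from $w$ to $z$ inside $W$, and decomposing $G_W(z,w)=-\log|z-w|+u(z)$ with $u$ harmonic on $W$ and boundary values $u(\zeta)=\log|\zeta-w|$, the probabilistic solution of the Dirichlet problem yields
\[
u(z)=\bE_z\bigl[\log|B_\tau - w|\bigr],
\]
where $B_t$ is complex Brownian motion started at $z$ and $\tau$ is its first exit time from $W$. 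The conjugate $G_W^*(z,w)$ is recovered in parallel by path-integration of $dG_W^*$ along a precomputed polygonal curve from $w$ to $z$ inside $W$, or equivalently by a second Dirichlet integral against a conjugate kernel.

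Next I would replace continuous Brownian motion by a \emph{walk on spheres} driven by the boundary oracle. Starting at $z$, at each step the walker queries the oracle in the sense of \eqref{star1} to obtain an approximation $r_x$ to $\dist(x,\partial W)$ within a constant factor, then jumps to a uniformly random point on $\partial B(x,r_x)$, sampled using $\mathrm{poly}(n)$ random bits. Classical Beurling-type harmonic measure estimates guarantee that after $L=O(n)$ such steps the walker lies within $2^{-n}$ of $\partial W$ with overwhelming probability, uniformly in the geometry of $W$; projecting the endpoint to a nearest boundary pixel approximates $B_\tau$ to precision $2^{-n}$.

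A $\ccl{\#P}$ machine then enumerates the $2^{\mathrm{poly}(n)}$ choices of random bits defining such walks (these are exactly its nondeterministic branches), deterministically simulates the corresponding walk in polynomial time using the boundary oracle, bins the resulting exit approximation $\zeta_\omega$ into a dyadic cell on $\partial W$, and counts. Suitably weighted sums of these counts against $\log|\zeta-w|$ (for $u$, hence for $|\varphi(z)|$) and against the conjugate kernel (for $\arg\varphi(z)$) approximate $\varphi(z)$ to precision $2^{-n}$; each weighted sum is itself a difference of two $\ccl{\#P}$ counts, reducing the output to a polynomial number of $\ccl{\#P}$ queries.

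The main obstacle is the error analysis: one must show that the pixelation error of the boundary oracle, the discretization of $r_x$ at each step, and the finite number of random bits combine into a total error below $2^{-n}$ while the walk length and the number of random bits stay polynomial in $n$. The essential analytic inputs are a uniform, geometry-independent Beurling-type estimate forcing geometric decay of the distance from the walker to $\partial W$, and an integrability bound on $\log|B_\tau-w|$ that controls the tails of the Dirichlet integral and thereby justifies the truncation at $L=O(n)$ steps.
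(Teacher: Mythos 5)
Your outline captures the right circle of ideas: representing the Riemann map via the Green's function $G_W(\cdot,w)$, solving the Dirichlet problem probabilistically as $u(z)=\bE_z[\log|B_\tau-w|]$, simulating Brownian motion by walk-on-spheres driven by the boundary oracle, and then observing that an exact average over random bit strings is a $\ccl{\#P}$ computation. This is indeed the general framework behind both the $\ccl{PSPACE}$ bound of \cite{BBY3} and Rettinger's $\ccl{\#P}$ improvement \cite{Ret1}. However, you are proposing an \emph{explicit} counting algorithm, whereas the survey is quite emphatic that the actual argument ``is not based on an explicit algorithm. Rather, the existence of $A$ is derived from derandomization results for random walks established in \cite{BB07}.'' That derandomization machinery for Euclidean random walks is the ingredient that brings the complexity down from $\ccl{PSPACE}$ to $\ccl{\#P}$; your sketch omits it entirely.

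The place where this matters is precisely the step you flag as ``the main obstacle.'' You assert that after $L=O(n)$ walk-on-spheres steps, each using $\mathrm{poly}(n)$ random bits, the walker is within $2^{-n}$ of $\partial W$ with overwhelming probability, and that a nearest-boundary-pixel projection gives a $2^{-n}$-accurate sample of harmonic measure. This is not something that a generic Beurling-type estimate hands you for free: the oracle only gives pixelated (factor-of-two) approximations to $\dist(x,\partial W)$, the per-step discretization of the uniform distribution on a circle introduces systematic bias rather than merely variance, and the projection to the boundary at termination distorts harmonic measure in a geometry-dependent way. Controlling the \emph{accumulated} distributional error over $O(n)$ adaptive steps with only polynomially many random bits, uniformly over all simply connected $W$, is exactly what the derandomization theorems of \cite{BB07} provide; without them the straightforward enumeration gives essentially the $\ccl{PSPACE}$ bound of the earlier work. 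So while the decomposition $\log\varphi=-G_W-iG_W^*$ and the walk-on-spheres reduction are the right starting point, the heart of the proof — why $\mathrm{poly}(n)$ random bits and $\mathrm{poly}(n)$ steps suffice to hit the $2^{-n}$ error target — is missing, and it cannot be filled in by citing Beurling alone.
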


\noindent
This result of \cite{Ret1} improved that of \cite{BBY3}, where the same statement was obtained with complexity bound $\ccl{PSPACE}$.
The statement of Theorem~\ref{riem-up} (as well as its predecessor in \cite{BBY3}) is not based on an explicit algorithm. Rather, the existence of $A$ is derived from derandomization results for random walks established in \cite{BB07}.

Computation of the mapping $\varphi$ is important for applications, and
numerous algorithms have been implemented in practice, however, none of them reaches the theoretical efficiency limit of $\ccl{\# P}$.
The most computationally
efficient algorithm used 
nowadays to calculate the conformal map is the ``Zipper'', invented by
Marshall (see \cite{Mar}). 
The effectiveness of this algorithm was studied by Marshall and Rohde in \cite{MR}. 
In the settings of the 
Theorem~\ref{riem-up}, it belongs to the complexity class $\ccl{EXP}(n)$.
It is reasonable to expect then, that an algorithm can be found in the class $\ccl{\# P}$
which is more practically efficient than ``Zipper''.

\section{Computable Carath{\'e}odory Theory}

The theory of Carath{\'e}odory (see e.g. \cite{Mil,PomUnivalent})
 deals with the question of extending the map $f$ to the unit circle. It is most widely known
in the case when $\partial W$ is a locally connected set. We remind the reader that a Hausdorff topological space $X$ is
called locally connected if for every point $x\in X$ and every open set $V\ni x$ there exists a connected set $U\subset V$
such that $x$ lies in the interior of $U$. Thus, every point $x\in X$ has a basis of connected, but not necessarily open,
neighborhoods. This condition is easily shown to be equivalent to the (seemingly stronger) requirement that every point $x\in X$
has a basis of open connected neighborhoods. In its simplest form, Carath{\'e}odory Theorem says:

\begin{baby-carat}
A conformal mapping $f:\DD\to W$ continuously extends to the unit circle if and only if $\partial W$ is locally connected.
\end{baby-carat}

\noindent
A natural question from the point of view of Computability Theory is then the following:

\medskip
\noindent
{\sl What information do we need about the boundary of the domain $ W$ in order to compute the Carath{\'e}odory extension
$f:S^1\to\partial W$ ?}

\medskip
\noindent
Below we discuss a constructive Carath{\'e}odory theory, which, in particular, answers this question.


\label{sec:carat}
\subsection{Carath{\'e}odory Extension Theorem}
We give a very brief account of the principal elements of the theory here, for details see e.g. \cite{Mil,PomUnivalent}.
In what follows, we fix a bounded simply connected domain $ W\subset\CC$, and a point $w\in W$; we will
refer to such a pair as a {\it pointed domain}, and use notation $( W,w)$ (to adopt what follows to unbounded domains, one
simply needs to replace the Euclidean metric on $\CC$ with the spherical metric on $\hat\CC$).
A {\it crosscut} $\gamma\subset W$ is a homeomorphic image
of the open interval $(0,1)$ such that the closure $\overline\gamma$ is homeomorphic to the closed inerval $[0,1]$ and the two endpoints
of $\overline\gamma$ lie in $\partial W$. It is not difficult to see that a crosscut divides
$ W$ into two connected components. Let $\gamma$ be a crosscut such that $w\notin\gamma$.
The component of $ W\setminus\gamma$ which does not contain $w$ is called {\it the crosscut neighborhood} of $\gamma$ in
$( W,w)$. We will denote it $N_\gamma$.

A {\it fundamental chain} in $( W,w)$ is a nested infinite sequence
$$N_{\gamma_1}\supset N_{\gamma_2}\supset N_{\gamma_3}\supset\cdots$$
of crosscut neighborhoods such that the closures of the crosscuts $\gamma_j$ are disjoint, and such that
$$\diam \gamma_j\longrightarrow 0.$$
Two fundamental chains $(N_{\gamma_j})_{j=1}^\infty$ and $(N_{\tau_j})_{j=1}^\infty$ are {\it equivalent} if every $N_{\gamma_j}$ contains
some $N_{\tau_i}$ and conversely, every $N_{\tau_i}$ contains
some $N_{\gamma_j}$.
Note that any two fundamental chains $(N_{\gamma_j})_{j=1}^\infty$ and $(N_{\tau_j})_{j=1}^\infty$ are either equivalent or eventually
disjoint, i.e. $N_{\gamma_j}\cap N_{\tau_i}=\emptyset$ for $i$ and $j$ sufficiently large.

The key concept of Carath{\'e}odory theory is a {\it prime end}, which is an equivalence class of fundamental
chains. The {\it impression} $\cI(p)$ of a prime end $p$ is a compact connected subset of $\partial  W$ defined as
follows: let  $(N_{\gamma_j})_{j=1}^\infty$ be any fundamental chain in the equivalence class $p$, then
$$\cI(p)=\cap\overline{N_{\gamma_j}}.$$
We say that the impression of a prime end is {\it trivial} if it consists of a single point. It is easy to see (cf. \cite{Mil}) that:
\begin{prop}
\label{trivial-impression}
If the boundary $\partial W$ is locally connected then the impression of every prime end is trivial.

\end{prop}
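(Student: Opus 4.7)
The plan is to prove the stronger quantitative statement that $\diam(\overline{N_{\gamma_j}})\to 0$ as $j\to\infty$, from which triviality of $\cI(p)=\bigcap_j\overline{N_{\gamma_j}}$ follows immediately. My approach will be purely topological and bypass the Riemann map: I will use local connectedness of $\partial W$ to close each crosscut $\gamma_j$ by a small arc in $\partial W$, producing a small Jordan curve that traps $N_{\gamma_j}$ in its interior.

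First I upgrade local connectedness to uniform local arcwise connectedness. Since $\partial W$ is compact (as $W$ is bounded), connected (as $W\subsetneq\CC$ is simply connected), and locally connected by hypothesis, a classical theorem of Mazurkiewicz on locally connected continua supplies the uniform statement: for every $\eta>0$ there exists $\delta>0$ such that any two points of $\partial W$ at distance less than $\delta$ are joined by an arc $K\subset\partial W$ of diameter less than $\eta$.

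With this tool in hand, I fix $\eta>0$ below $\min\{\dist(w,\partial W),\dist(w,\overline{N_{\gamma_1}})\}$, let $\delta$ correspond to the parameter $\eta/4$, and take $j$ large enough that $\diam(\gamma_j)<\min(\delta,\eta/4)$. The endpoints $p_j,q_j$ of $\gamma_j$ then lie within $\delta$ of one another, so I can join them by an arc $K_j\subset\partial W$ with $\diam(K_j)<\eta/4$. The union $J_j:=\overline{\gamma_j}\cup K_j$ is a Jordan curve — the two arcs meet only at $p_j,q_j$, since $\gamma_j\subset W$ is disjoint from $K_j\subset\partial W$ — and has diameter less than $\eta/2$. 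By the Jordan Curve Theorem, $\CC\setminus J_j$ splits into a bounded interior of diameter less than $\eta/2$ and an unbounded exterior. Because $\dist(w,J_j)\ge\eta>\diam(J_j)$, the basepoint $w$ must lie in the exterior. Since $N_{\gamma_j}$ and $M_j$ (the component of $W\setminus\gamma_j$ containing $w$) both avoid $J_j$ and sit on opposite local sides of the crosscut $\gamma_j\subset J_j$, they occupy opposite components of $\CC\setminus J_j$; hence $N_{\gamma_j}$ lies in the interior of $J_j$ and $\diam(\overline{N_{\gamma_j}})<\eta/2$. Letting $\eta\to 0$ completes the proof.

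The main obstacle will be the topological placement of $N_{\gamma_j}$ inside $J_j$: one must argue simultaneously that $w$ falls in the exterior (via the diameter bound on $J_j$) and that $N_{\gamma_j}$ and $M_j$ genuinely sit in distinct components of $\CC\setminus J_j$ (via the two-sided local structure of the crosscut). The Mazurkiewicz upgrade of local connectedness to uniform local arcwise connectedness, while classical, is the other nontrivial ingredient that I will invoke rather than reprove.
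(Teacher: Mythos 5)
Your argument is correct. Since the paper gives no proof of this proposition (it simply cites Milnor), there is no internal argument to compare against; but your route is precisely the one implicit in the paper's later discussion of the Carath\'eodory modulus: local connectedness of $\partial W$ (hence, by Mazurkiewicz--Moore applied to the Peano continuum $\partial W$, uniform local arcwise connectedness) yields a function $\eta(r)\searrow 0$ with $\diam N_\gamma<\eta(\diam\gamma)$, and triviality of every impression is then immediate from $\diam\gamma_j\to 0$. Your Jordan-curve construction supplies the missing quantitative step cleanly and, notably, uses no Riemann map and no length--area estimate --- only the Jordan Curve Theorem and the arcwise upgrade of local connectedness. Two points deserve a word of care in a write-up, though neither is a gap: (1) the assertion that $\partial W$ is connected (needed to invoke Mazurkiewicz on a \emph{continuum}) is true for a bounded simply connected domain but is not a triviality --- one standard argument is that $\partial W$ is the nested intersection of the connected compacta $\overline{f(\{r<|z|<1\})}$; (2) the claim that $N_{\gamma_j}$ and $M_j$ lie in opposite components of $\CC\setminus J_j$ is best justified by noting that both complementary components of the Jordan curve accumulate at any interior point $z_0$ of $\gamma_j$, so a small neighborhood $V\ni z_0$ with $V\subset W$ and $V\cap J_j=V\cap\gamma_j$ must meet both $\operatorname{Int}(J_j)$ and $\operatorname{Ext}(J_j)$, while $V\setminus\gamma_j\subset N_{\gamma_j}\sqcup M_j$ and each of $N_{\gamma_j},M_j$ is connected and disjoint from $J_j$; ``opposite local sides'' compresses this but the expanded version is what actually does the work.
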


We define the {\it Carath{\'e}odory compactification} $\widehat  W$ to be the disjoint union of $ W$ and  the set of prime ends
of $W$ with the following topology. For any crosscut neighborhood $N$ let $\widetilde N\subset\widehat  W$ be the neighborhood
$N$ itself, and the collection of all prime ends which can be represented by fundamental chains starting with $N$. These
neighborhoods, together with the open subsets of $ W$, form the basis for the topology of $\widehat W$.
The above definition originated in \cite{Mzk}.

\begin{carat}
Every conformal isomorphism $\phi: W\to\DD$ extends uniquely to a homeomorphism
$$\hat \phi:\widehat  W\to\overline\DD.$$
\end{carat}
Carath{\'e}odory Theorem for locally connected domains is a synthesis of the above statement and \propref{trivial-impression}.

Let us note (see \cite{Mil}, p. 184):
\begin{lem}
\label{cont-lc}
If $f$ is a continuous map from a compact locally connected space $X$ onto a Hausdorff space $Y$, then $Y$ is also
locally connected.
\end{lem}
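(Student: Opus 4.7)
The plan is to use the standard characterization that a space is locally connected if and only if every connected component of every open subset is open, and to exploit the fact that a continuous surjection from a compact space to a Hausdorff space is automatically a closed, hence quotient, map.

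First I would verify the two structural facts about $f$. Since $X$ is compact and $Y$ is Hausdorff, $f$ sends closed (hence compact) subsets of $X$ to compact, and therefore closed, subsets of $Y$; thus $f$ is a closed map. A continuous closed surjection is a quotient map: if $f^{-1}(U)\subset X$ is open, then $X\setminus f^{-1}(U)$ is closed, its image $f(X\setminus f^{-1}(U))$ is closed in $Y$, and by surjectivity this image equals $Y\setminus U$, so $U$ is open in $Y$.

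Next, given an open set $V\subset Y$ and a connected component $C$ of $V$, I would show that $f^{-1}(C)$ is open in $X$. The set $f^{-1}(V)$ is open, so by local connectedness of $X$ it decomposes as a disjoint union $\bigsqcup_\alpha U_\alpha$ of open connected components. For each $\alpha$ the image $f(U_\alpha)$ is a connected subset of $V$ and therefore lies entirely in exactly one component of $V$. A point $x\in f^{-1}(C)$ lies in some $U_\alpha$, and then $f(U_\alpha)$ is a connected subset of $V$ meeting $C$, forcing $f(U_\alpha)\subseteq C$. Conversely, any $U_\alpha$ with $f(U_\alpha)\subseteq C$ is contained in $f^{-1}(C)$. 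Hence
\[
f^{-1}(C)=\bigsqcup_{\alpha:\ f(U_\alpha)\subseteq C} U_\alpha,
\]
which is a union of open sets and therefore open in $X$. Applying the quotient map property of $f$, the set $C$ itself is open in $Y$. Since $V$ and $C$ were arbitrary, every component of every open set in $Y$ is open, so $Y$ is locally connected.

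I do not expect any real obstacle: the only subtle point is the justification that $f$ is a quotient map, which follows cleanly from compactness of $X$ and the Hausdorff property of $Y$. Everything else is a routine manipulation of connected components under continuous images.
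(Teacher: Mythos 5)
Your proof is correct. Note, however, that the paper does not actually supply a proof of this lemma: it states it with a citation to Milnor's \emph{Dynamics in One Complex Variable} (p.~184), so there is no in-paper argument to compare against. Your route is one of the two standard ones: you observe that a continuous surjection from a compact space onto a Hausdorff space is closed and hence a quotient map, and then transfer local connectedness via the ``components of open sets are open'' characterization, writing $f^{-1}(C)$ as a union of (open) components of the open set $f^{-1}(V)$. The alternative textbook argument, which is closer to what Milnor gives, constructs connected neighborhoods directly: given $y_0\in V$ with $V$ open, cover the compact fiber $f^{-1}(y_0)$ by finitely many connected open sets $U_1,\dots,U_k\subset f^{-1}(V)$ (possible by local connectedness of $X$), set $W=\bigcup_i U_i$, and use closedness of $f$ to see that $Y\setminus f(X\setminus W)$ is an open neighborhood of $y_0$ contained in the connected set $f(W)\subset V$. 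Both approaches rest on the same two facts you isolate (closedness of $f$ and local connectedness of $X$); yours delivers the stronger-looking ``open components'' form of local connectedness directly, while the direct construction stays with the neighborhood-basis definition and avoids invoking that equivalence. There is no gap in your argument.
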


We also note:
\begin{thm}In the case when $ W$ is Jordan, the identity map $ W\to W$ extends to a
homeomorphism between the Carath{\'e}odory closure $\widehat W$ and $\overline W$.
\end{thm}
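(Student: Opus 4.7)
Since $W$ is bounded by a Jordan curve, $\partial W$ is locally connected, so \propref{trivial-impression} guarantees that every prime end $p$ has a single-point impression $\cI(p)=\{x_p\}$. This immediately supplies a candidate extension
$$\Phi:\widehat W\to\overline W,\qquad \Phi|_W=\mathrm{id}_W,\qquad \Phi(p)=x_p,$$
and the plan is to show that $\Phi$ is a homeomorphism. The full Carath{\'e}odory Theorem recalled above provides a homeomorphism $\hat\phi:\widehat W\to\overline\DD$, so in particular $\widehat W$ is compact and Hausdorff. Since a continuous bijection from a compact space onto a Hausdorff space is automatically a homeomorphism, it suffices to exhibit $\Phi$ as a continuous bijection.

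For continuity at a prime end $p$, I would use local connectivity of $\partial W$ to select, in the equivalence class of $p$, a fundamental chain $(N_{\gamma_j})$ for which the Euclidean diameter of $\overline{N_{\gamma_j}}$ — and not merely that of $\gamma_j$ — tends to zero; this is where the Jordan curve structure lets one control the boundary arc cut out by $\gamma_j$. The basic neighbourhoods $\widetilde N_{\gamma_j}$ of $p$ are then mapped by $\Phi$ into arbitrarily small Euclidean neighbourhoods of $x_p$. Surjectivity of $\Phi$ onto $\partial W$ is comparatively routine: every point of a Jordan curve is accessible from inside $W$, and an accessibility sequence of shrinking crosscuts generates a prime end whose impression is the prescribed point.

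The main obstacle is injectivity of $\Phi$ on prime ends, and this is the only place where the full Jordan hypothesis (rather than mere local connectivity) is used; a slit disk shows that local connectivity alone allows distinct prime ends to collapse to a common impression on the slit. Suppose $p\neq q$ are prime ends with $\cI(p)=\cI(q)=\{x\}$, represented by fundamental chains $(N_{\gamma_j})$ and $(N_{\tau_j})$. By the disjoint-or-equivalent dichotomy noted in the text, for $j$ large the crosscut neighbourhoods $N_{\gamma_j}$ and $N_{\tau_j}$ are disjoint; yet each of $\overline{N_{\gamma_j}}\cap\partial W$ and $\overline{N_{\tau_j}}\cap\partial W$ is a closed sub-arc of the Jordan curve $\partial W$ containing $x$, with diameters shrinking to zero. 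Two closed sub-arcs of a Jordan curve that both accumulate on $x$ and have interiors collapsing to $x$ must eventually nest, and this nesting of boundary arcs combined with the Jordan curve theorem forces one of $N_{\gamma_j}$, $N_{\tau_j}$ to be contained in the other, contradicting disjointness. This establishes injectivity and, combined with the previous paragraph, completes the proof that $\Phi$ is a homeomorphism extending $\mathrm{id}_W$.
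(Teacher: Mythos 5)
Your overall scaffolding is sound: defining $\Phi(p)=x_p$ via Proposition~\ref{trivial-impression}, observing that $\widehat W$ is compact Hausdorff (from Carath\'eodory's theorem), and reducing to a continuous-bijection argument is the right plan. The continuity step (pick a fundamental chain with $\diam\overline{N_{\gamma_j}}\to 0$ via the Carath\'eodory modulus, note $\Phi(\widetilde N)\subset\overline N$) and surjectivity (accessibility of every point of a Jordan curve) are fine, modulo standard facts you are entitled to quote. This is a genuinely different route from the classical proof, which argues directly that the Riemann map extends to an injective map $\partial\DD\to\partial W$ using a chord of $\DD$ and the Jordan curve theorem; your version stays purely on the prime-end side.

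The gap is in the injectivity step. The assertion that ``two closed sub-arcs of a Jordan curve that both accumulate on $x$ and have interiors collapsing to $x$ must eventually nest'' is false. On the unit circle with $x=1$, take $\alpha_j$ the arc from $e^{-i/j}$ to $e^{i/j^2}$ and $\beta_j$ the arc from $e^{-i/j^2}$ to $e^{i/j}$; both contain $1$ and both shrink to $\{1\}$, yet for every $j$ the endpoints interleave and neither arc contains the other. So the case of interleaving endpoints is not handled, and the ``nesting forces containment of crosscut neighbourhoods'' step never gets off the ground. What you actually need, and what genuinely uses the Jordan hypothesis beyond local connectivity of $\partial W$, is that $\overline W$ is a $2$-manifold with boundary (Schoenflies), so for every $x\in\partial W$ and all sufficiently small disks $D\ni x$ the intersection $D\cap W$ is \emph{connected}. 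Fix $j$ large so that $N_{\gamma_j}\cap N_{\tau_j}=\emptyset$ and $x\notin\overline\gamma_j\cup\overline\tau_j$, and pick $D$ small enough to be disjoint from $\overline\gamma_j\cup\overline\tau_j$ with $D\cap W$ connected. Since $x\in\overline{N_{\gamma_j}}$, the set $D\cap W$ meets $N_{\gamma_j}$; being connected and disjoint from the crosscut $\gamma_j$, it lies entirely in $N_{\gamma_j}$, and by the same argument entirely in $N_{\tau_j}$ -- contradicting disjointness. Replacing your nesting claim by this local-connectedness-of-$W$-at-the-boundary argument closes the gap and correctly explains why the slit disk (where $D\cap W$ has two components near the slit) escapes the conclusion.
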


In the Jordan case, we will use the notation $\overline\phi$ for the extension of a conformal map to the closure of $ W$.
Of course,
$$\overline\phi=(\overline f)^{-1}.$$

Carath{\'e}odory compactification of $( W,w)$
can be seen as its metric completion for the following metric. Let $z_1$, $z_2$ be two points in
$ W$ distinct from $w$.
We will define the {\it crosscut distance} $\dist^ W_C(z_1,z_2)$ between $z_1$ and $z_2$ as the infimum of the diameters of
curves $\gamma$ in $ W$ for which one of the following properties holds:
\begin{itemize}
\item $\gamma$ is a crosscut such that $z_1$ and $z_2$ are contained in the crosscut neighborhood $N_\gamma$; 
\item $\gamma$ is a simple closed curve such that $z_1$ and $z_2$ are contained in the bounded component of the complement $\CC\setminus \gamma$ and 
$\gamma$ separates $z_1$, $z_2$ from $w$.
\end{itemize}

\noindent
It is easy to verify that:
\begin{thm}
The crosscut distance is a metric on $ W\setminus\{w\}$ which is locally equal to the Euclidean one.
The completion of $ W$ equipped with $\dist_C$ is homeomorphic to $\widehat W$.
\end{thm}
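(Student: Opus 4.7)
My plan is to decompose the theorem into three parts, handled in order: that $\dist_C$ is a metric on $W\setminus\{w\}$, that it agrees locally with the Euclidean metric, and that its metric completion is naturally homeomorphic to $\widehat W$.

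The metric axioms I would dispatch as follows. Symmetry and non-negativity are immediate. Non-degeneracy of $\dist_C(z_1,z_2)$ for $z_1\ne z_2$ follows from a lower bound on diameters of curves satisfying either condition in terms of $|z_1-z_2|$ and the Euclidean distance of $\{z_1,z_2\}$ to $\partial W\cup\{w\}$. For the triangle inequality I would pick near-optimal witnesses $\gamma_{12},\gamma_{23}$ for $\dist_C(z_1,z_2)$ and $\dist_C(z_2,z_3)$; in each of the four cases depending on whether they are loops (condition (ii)) or crosscuts (condition (i)), the union $\gamma_{12}\cup\gamma_{23}$ can be surgered into a single valid curve for $(z_1,z_3)$ of diameter at most $\diam(\gamma_{12})+\diam(\gamma_{23})$. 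For the local equivalence at $z\in W\setminus\{w\}$, I would let $\rho>0$ be the infimum of diameters of crosscuts separating $z$ from $w$ in $W$ (positive by simple connectivity and $z\ne w$) and pick $r<\tfrac14\min\{\dist(z,\partial W),|z-w|,\rho\}$. On $B(z,r)$ the upper bound $\dist_C\le|\cdot-\cdot|$ comes from arbitrarily narrow circular loops satisfying (ii); the matching lower bound follows because any loop satisfying (ii) has diameter $\ge|z_1-z_2|$ by projecting to the line through $z_1,z_2$, while any crosscut $\gamma$ satisfying (i) has diameter $<\rho$ only if $\gamma$ does not separate $z$ from $w$, in which case $\{z_1,z_2\}\not\subset N_\gamma$.

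The main content is the identification of the completion with $\widehat W$. I would construct $\Phi\colon\widehat W\to X$, where $X$ denotes the $\dist_C$-completion, setting $\Phi|_W=\mathrm{id}$ and sending a prime end $p$ with fundamental chain $(N_{\gamma_j})$ to the equivalence class of any Cauchy sequence $(x_j)$ with $x_j\in N_{\gamma_j}$. Such a sequence is $\dist_C$-Cauchy because $\dist_C(x_j,x_k)\le\diam(\gamma_{\min(j,k)})\to 0$ via condition (i). Well-definedness follows from the interlacing definition of equivalence of fundamental chains; injectivity uses the distinctness of impressions of distinct prime ends; surjectivity is obtained by extracting a nested sequence of crosscut witnesses from any $\dist_C$-Cauchy sequence that does not converge inside $W$.

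The main obstacle I anticipate is proving that $\Phi$ is a homeomorphism, which requires matching the basis $\{\widetilde N\}$ of the Carath\'eodory topology with the $\dist_C$-ball basis in the completion. Concretely, I would establish that a sequence $x_n\in W$ lies eventually inside every crosscut neighborhood of a given fundamental chain if and only if its $\dist_C$-distance to the corresponding class in $X$ tends to zero, bounding this distance above by the diameter of any crosscut of the chain whose neighborhood contains $x_n$, and below by the diameter of the first crosscut of the chain ``missed'' by $x_n$.
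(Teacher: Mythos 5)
The paper states this theorem without proof, so there is no author argument to compare against. Your overall decomposition and the construction of $\Phi$ via Cauchy sequences drawn from fundamental chains are the right ideas, but the injectivity step is wrong as written. You claim that ``injectivity uses the distinctness of impressions of distinct prime ends,'' yet distinct prime ends need not have distinct impressions: in the slit disk $\DD\setminus[0,1)$ the two prime ends approaching the tip of the slit from opposite sides share the same one-point impression, and comb-type domains give distinct prime ends with identical non-trivial impressions. Impressions therefore cannot separate prime ends, and injectivity cannot be deduced from them. The ingredient you actually need is the fact, recalled in the paper just before the definition of a prime end, that two inequivalent fundamental chains are \emph{eventually disjoint}: $N_{\gamma_j}\cap N_{\tau_i}=\emptyset$ for all $i,j$ large. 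One must then show that representative Cauchy sequences $(x_j)$ and $(y_i)$ stay a definite $\dist_C$-distance apart; this is not immediate either, since you must rule out a third qualifying curve of small diameter whose crosscut neighborhood swallows both disjoint tails, and that exclusion needs its own short argument.

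There are also two smaller imprecisions in the metric axioms. For non-degeneracy, a lower bound on $\diam\gamma$ for a crosscut $\gamma$ with $z_1\in N_\gamma$ cannot be given purely in terms of $|z_1-z_2|$ and $\dist(\{z_1,z_2\},\partial W\cup\{w\})$: in a dumbbell-shaped domain a crosscut across the narrow neck, far from both points, separates $z_1$ from $w$ while having arbitrarily small diameter. The correct bound is $\diam\gamma\ge\dist(\alpha,\partial W)$ for any fixed arc $\alpha\subset W$ joining $z_1$ to $w$, because $\gamma$ must cross $\alpha$ while having endpoints on $\partial W$; this is still positive, so the conclusion survives. Similarly, in the local lower bound, ``$\gamma$ does not separate $z$ from $w$'' gives $z\notin N_\gamma$ but not directly $\{z_1,z_2\}\not\subset N_\gamma$; you need the extra observation that a crosscut of diameter below $3r$ misses $B(z,3r)\supset\{z_1,z_2\}$, while one of diameter at least $3r$ already exceeds $|z_1-z_2|<2r$. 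Both of these are repairable in place; the injectivity step is the one requiring a genuinely different idea.
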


\subsection{Computational representation of prime ends}

\begin{defn}
We say that a curve $g:(0,1)\to\CC$ is a {\it rational polygonal curve} if:
\begin{itemize}
\item the image of $g$ is a simple curve;
\item $g$ is piecewise-linear with rational coefficients.
\end{itemize}
\end{defn}

\noindent
The following is elementary:
\begin{prop}
\label{specify-end}
Let $N_{\gamma_j}$ be a fundamental chain in a pointed simply-connected domain $( W,w)$. Then there exists
an equivalent fundamental chain $N_{\tau_j}$ such that the following holds. For every $j$ there exists a rational
polygonal curve $t_j:(0,1)\to\CC$ with
$$t_j(0.5)\in N_{\gamma_j}\setminus \overline{N_{\gamma_{j+1}}},$$
and such that $\tau_j\subset t_j([0.01,0.99])$.
Furthermore, $t_j$ can be chosen so that $$\diam t_j(0,1)\underset{j\to\infty}\longrightarrow 0.$$
\end{prop}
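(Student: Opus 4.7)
The plan is to construct, for each $j$, a simple rational polygonal curve $t_j$ by choosing a rational midpoint $p_j\in N_{\gamma_j}\setminus\ov{N_{\gamma_{j+1}}}$ close to $\gamma_j$, concatenating at $p_j$ two polygonal approximations of the two halves of $\gamma_j$, and extending each half past $\partial W$ at its outer endpoint. The crosscut $\tau_j$ is then defined as the connected component of $t_j((0,1))\cap W$ containing $p_j$.

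Since $N_{\gamma_j}\setminus\ov{N_{\gamma_{j+1}}}$ is a nonempty open set with $\gamma_j\subset\partial N_{\gamma_j}$, for any $\epsilon_j>0$ we may pick a rational point $p_j$ in this set with $\dist(p_j,\gamma_j)<\epsilon_j$; we set $t_j(0.5)=p_j$ and let $p_j^*=\gamma_j(s)$ be a nearest point to $p_j$ on $\gamma_j$, using a parameterization $\gamma_j:[0,1]\to\ov{\gamma_j}$ with $\gamma_j(0),\gamma_j(1)\in\partial W$. For each of the two half-arcs $\gamma_j|_{[0,s]}$ and $\gamma_j|_{[s,1]}$, we take a fine subdivision and replace each subdivision point by a nearby rational point in $W$; start each polygonal half from a rational point slightly to one side of $p_j^*$ (linked to $p_j$ by a short straight segment), and terminate with a short segment from a rational point near $\gamma_j(0)$ or $\gamma_j(1)$ to a rational point in $\CC\setminus\ov{W}$. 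Such an endpoint exists in every neighborhood of a boundary point, since $\Int\ov{W}=W$ for open $W$. Concatenating the two halves at $p_j$ yields $t_j$.

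For $\epsilon_j$ small enough, the two polygonal halves lie in disjoint tubular neighborhoods of the corresponding disjoint halves of $\gamma_j$, and each half is simple by smallness of perturbation; hence $t_j$ is a simple rational polygonal curve. Each half crosses $\partial W$ at least once, by the intermediate value theorem applied to its last segment, so the connected component $\tau_j$ of $t_j((0,1))\cap W$ containing $p_j$ is an arc whose closure meets $\partial W$ at exactly two points, i.e., a crosscut of $W$. A linear reparameterization arranges that $t_j(0.5)=p_j$ and that the outer tails occupy $[0,0.01]\cup[0.99,1]$, giving $\tau_j\subset t_j([0.01,0.99])$; moreover, $\diam t_j\leq \diam\gamma_j+O(\epsilon_j)\to 0$. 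Taking $\epsilon_j<\tfrac{1}{3}\dist(\gamma_j,\ov{\gamma_{j-1}}\cup\ov{\gamma_{j+1}})$, positive by the fundamental-chain hypothesis, forces $\tau_j\subset N_{\gamma_{j-1}}\setminus\ov{N_{\gamma_{j+1}}}$, and a standard separation argument then gives $N_{\gamma_{j+1}}\subset N_{\tau_j}\subset N_{\gamma_{j-1}}$, so that $(N_{\tau_j})$ is equivalent to $(N_{\gamma_j})$.

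The main technical obstacle is ensuring simplicity of the concatenated curve $t_j$, since a priori the two polygonal halves from $p_j$ could cross each other. This is handled by keeping each half within a small tubular neighborhood of its own half of $\gamma_j$, the two such halves of $\gamma_j$ being disjoint by simplicity of $\gamma_j$. A subsidiary subtlety is that the outward extensions may re-enter $W$ if $\partial W$ is irregular; this causes no harm, as we work only with the component of $t_j((0,1))\cap W$ that contains $p_j$, whose two boundary points on $\partial W$ are produced by the first crossings of $\partial W$ on either side of $p_j$.
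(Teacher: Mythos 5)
The paper offers no proof (it just labels the proposition ``elementary''), so I can only evaluate your argument on its own terms. Your overall plan --- approximate each $\gamma_j$ by a rational polygonal curve that crosses $\partial W$ near the two ends of $\gamma_j$, and take $\tau_j$ to be the component of $t_j((0,1))\cap W$ through $p_j$ --- is reasonable, and the bookkeeping for $\diam t_j\to 0$ and $\tau_j\subset N_{\gamma_{j-1}}\setminus\overline{N_{\gamma_{j+1}}}$ is fine.

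There are, however, genuine gaps. First, a factual error: $\operatorname{Int}\overline W=W$ is false for general open sets (e.g.\ the slit disk $\DD\setminus[0,1)$ has $\operatorname{Int}\overline W=\DD$), so one cannot always terminate the half-arcs at a rational point of $\CC\setminus\overline W$; the last segment must instead merely \emph{pass through} $\CC\setminus W$, which needs a different (and more careful) justification. Second, and more seriously, the ``standard separation argument'' producing $N_{\gamma_{j+1}}\subset N_{\tau_j}\subset N_{\gamma_{j-1}}$ is not standard and does not follow from $\tau_j\subset N_{\gamma_{j-1}}\setminus\overline{N_{\gamma_{j+1}}}$. Transporting to the disk by a Riemann map, that region is a topological quadrilateral bounded by $\gamma_{j-1}$, $\gamma_{j+1}$ and two arcs of $\partial\DD$; a crosscut in it satisfies your inclusions only if its two endpoints land on \emph{different} boundary arcs. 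Euclidean proximity of $\tau_j$ to $\gamma_j$ does not control this: near the endpoints of $\gamma_j$, where the curve meets $\partial W$, a perturbation of size $\epsilon_j$ can land on a ``wrong'' prime end when $\partial W$ is wild (e.g.\ spirals or combs with features finer than $\epsilon_j$), and then $\tau_j$ cuts off a small piece that misses $N_{\gamma_{j+1}}$ entirely. Establishing that the first exit points of $t_j$ from $W$ lie on the correct prime-end sides is exactly the nontrivial content here, and it is waved away. Finally, even granting those inclusions for each $j$, they do not make $(N_{\tau_j})$ nested --- $N_{\tau_j}\subset N_{\gamma_{j-1}}$ and $N_{\gamma_{j+2}}\subset N_{\tau_{j+1}}$ do not give $N_{\tau_{j+1}}\subset N_{\tau_j}$ --- so you have not verified that $(N_{\tau_j})$ is a fundamental chain (one needs either the sharper inclusion $N_{\tau_j}\subset N_{\gamma_j}$, or to pass to a subsequence and re-index, neither of which is done).
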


\noindent
We call the sequence of polygonal curves $t_j$ as described in the above proposition  a \textit{representation}
of the prime end
$p$ specified by $N_{\gamma_j}$. Since only a finite amount of information suffices to describe each rational polygonal curve $t_j$,
the sequence $t_j$ can be specified by an oracle. Namely, there exists 
an algorithm $\cA$ such that for every representation $(t_j)$ of a prime end there exists 
a function $\phi:\NN\to\NN$ such that
given access to  the values of $\phi(i),$ $i=1,\ldots,n$, the algorithm $\cA$  outputs the coefficients of the rational polygonal curves
$t_j$, $j=1,\ldots,m_n$ with $m_n\underset{n\to\infty}{\longrightarrow}\infty$. We will refer to such $\phi$ simply as {\it an oracle for }$p$.

\subsection{Structure of a computable metric space on $\widehat W$.}
Let $K\Subset\CC$. We say that $\phi$ is {\it an oracle for }$K$ if $\phi$ is a function from the natural numbers to
sets of finite sequences of triples $(x_j,y_j,r_j)$ of rational numbers with the following property.
Let $$\phi(n)=\{(x_j,y_j,r_j)\}_{j=1}^{k_n},$$
and let $B_j$ be the ball of radius $r_j$ about the point $x_j+iy_j$. Then
$$\dist_H(\bigcup_{j=1}^{k_n} \ov{ B_j}, K)<2^{-n}.$$

Let $( W,w)$ be a simply-connected pointed domain. Then the following conditional computability result holds (see \cite{BRY}):

\begin{thm}
\label{cond-comp-metric}
The following is true in the presence of oracles for $w$ and for $\partial W$.
 The Carath{\'e}odory completion $\widehat W$
equipped with the crosscut distance is a computable metric space, whose ideal points are rational points in $ W$.
Moreover, this space is computably compact.
\end{thm}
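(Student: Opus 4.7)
The plan is to verify, using the oracles for $w$ and $\partial W$, the three requirements of a computable metric space structure on $\widehat W$ with rational points of $W$ as ideal points, and then upgrade this to computable compactness. For the ideal set $\cS$, I would enumerate rational points $z \in \CC$ equipped with a certificate that $z \in W$: using the Hausdorff oracle for $\partial W$, the estimate $\dist(z, \partial W) > 2^{-n}$ is semi-decidable, and the ``same component as $w$'' condition becomes semi-decidable by additionally enumerating a rational polygonal path from $w$ to $z$ staying far from a Hausdorff approximation of $\partial W$. The resulting $\cS$ is dense in $W$, hence dense in $\widehat W$ in the crosscut metric.

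Next I would make $\dist_C(z_1,z_2)$ uniformly computable for $z_1, z_2 \in \cS$. For the upper-approximation side, I enumerate rational polygonal curves $\gamma$ and use the $\partial W$-oracle to certify $\gamma \subset W$ by separation from a sufficiently fine Hausdorff approximation of $\partial W$; combinatorially one then checks whether $\gamma$ is a crosscut whose neighborhood $N_\gamma$ contains both $z_1$ and $z_2$, or a simple closed curve separating $\{z_1, z_2\}$ from $w$. Every such $\gamma$ yields the bound $\dist_C(z_1,z_2) \leq \diam \gamma$, and a standard piecewise-linear approximation shows rational polygonal witnesses are $C^0$-cofinal among all continuous ones, so this enumerates upper bounds converging to the true value. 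For the lower-approximation side I would use a grid discretization: at each scale $2^{-n}$ build a finite planar graph $G_n$ by deleting $2^{-n}$-cells meeting an outer Hausdorff approximation of $\partial W$, and verify combinatorially that no cycle or arc in $G_n$ of diameter $\leq q$ realizes the separation condition. For $n$ sufficiently large, the absence of a combinatorial witness rules out any continuous one after absorbing $O(2^{-n})$ inflations, giving semi-decidability of $\dist_C(z_1,z_2) > q$.

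Finally, for computable compactness, I would semi-decide that a finite union $U = \bigcup_j B(z_j, r_j)$ of ideal crosscut balls covers $\widehat W$ by the same grid approach: at scale $2^{-n}$ check that every vertex of $G_n$ lies in some $B(z_j, r_j)$, and that every connected component of $G_n \setminus U$ adjoining the $\partial W$-approximants is small enough that the prime ends potentially abutting it already lie in some $B(z_j, r_j)$ (the latter reduces, via the representation of prime ends by short polygonal crosscuts, to a finitary inclusion test). Compactness of $\widehat W$ as a metric completion guarantees that if $U$ is a genuine open cover then the procedure halts at some finite $n$.

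The main obstacle is the lower bound on $\dist_C$ --- and, in the same spirit, the cover-checking step --- both of which require converting a statement about arbitrary continua (curves of small diameter, prime ends escaping the cover) into a finitary combinatorial assertion on a grid. The essential inputs are the cofinality of rational polygonal witnesses for prime ends furnished by Proposition~\ref{specify-end} and Hausdorff-closeness of $\partial W$ to its oracle approximants, which together allow the $O(2^{-n})$ inflations introduced by discretization to be absorbed without changing the separation type of a crosscut or closed curve.
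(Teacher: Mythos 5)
Your overall strategy (rational ideal points, upper/lower rational approximation of $\dist_C$, grid discretization for the semi-decidable parts) is a reasonable template and probably in the right spirit, but the two technical hearts of the argument are glossed over in ways that are not merely ``details to fill in.''

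First, the upper-approximation step as written is wrong for the crosscut half of the definition of $\dist_C$. You certify candidate curves $\gamma$ by separating them from a Hausdorff approximation of $\partial W$, i.e.\ by showing $\dist(\gamma,\partial W)>0$; but a crosscut is precisely a curve whose \emph{closure} hits $\partial W$ at its two endpoints, so your certificate automatically excludes every crosscut. You would instead have to enumerate rational polygonal arcs whose endpoints are certified to lie \emph{outside} $\overline W$ (semi-decidable again via the oracle and the path-to-$w$ test in reverse), and then argue that the components of $\gamma\cap W$ are crosscuts, at least one of which has $z_1,z_2$ in its crosscut neighborhood with diameter bounded by $\diam\gamma$. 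That last combinatorial claim is not automatic (the trace $\gamma\cap W$ can consist of infinitely many crosscuts in a bad domain) and needs an actual argument; without it your enumerated upper bounds need not converge to the true $\dist_C$ from above, since you may be systematically missing the cheap crosscut witnesses (think of a thin fjord, where a tiny crosscut at the mouth is far cheaper than any simple closed curve enclosing the whole fjord).

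Second, for the lower bound you delete every cell meeting an outer approximation of $\partial W$ and then search for combinatorial separators in the kept graph $G_n$ --- but crosscuts terminate \emph{inside} the deleted region, and a crosscut can dip in and out of it, so its trace in $G_n$ need not be a single arc, and conversely a combinatorial arc in $G_n$ does not obviously extend to a crosscut in $W$ with the correct separation behavior (you need the extension from a kept cell to $\partial W$ to stay in $W$, and you need the resulting continuous curve to actually put $z_1,z_2$ on one side and $w$ on the other, not just to be Hausdorff-close to the combinatorial object). The ``absorb $O(2^{-n})$ inflations without changing the separation type'' sentence is exactly the nontrivial planar-topology lemma that is missing, and it is where a careless implementation would fail. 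The same vagueness infects the computable-compactness step: ``the prime ends potentially abutting it already lie in some $B(z_j,r_j)$'' is not a finitary condition until you prove a quantitative lemma relating the Euclidean size of an uncovered cell cluster adjacent to the deleted region to an upper bound on $\dist_C$ from that cluster to some ideal point, and Proposition~\ref{specify-end} (which you invoke) gives you representations of a \emph{single} prime end, not a uniform bound over all prime ends abutting a region. Until these transfer lemmas are stated and proved, the proposal is a plan rather than a proof.
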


\subsection{Moduli of locally connected domains}
Suppose $\partial W$ is locally connected. The following definition is standard:
\begin{defn}[{\bf Modulus of local connectivity}]
Let $X\subset \RR^2$ is a connected set. Any strictly increasing function $m:(0,a)\to\RR$
is called a {\it modulus of local connectivity} of $X$ if
\begin{itemize}
\item for all $x,y\in X$ such that $\dist(x,y)<r<a$ there exists a connected subset $L\subset X$
containing both $x$ and $y$ with the property $\diam(L)<m(r)$;
\item $m(r)\searrow 0$ as $r\searrow 0$.
\end{itemize}
\end{defn}

\noindent
Of course, the existence of a modulus of local connectivity implies that $X$ is locally connected.
Conversely, every compact connected and locally connected set has a modulus of local connectivity.

We note that every modulus of local connectivity is also a modulus of path connectivity:
\begin{prop}
Let $m(r)$ be a modulus of local connectivity for a connected set $X\subset\RR^2$. Let $x,y\in X$
such that $\dist (x,y)<r$. Then there exists a path $\ell$ between $x$ and $y$ with diameter
at most $m(r)$.
\end{prop}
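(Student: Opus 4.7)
The plan is to construct $\ell$ as a uniform limit of finer and finer ``chains'' in $X$, using the modulus $m$ first to produce a small connected scaffold containing $x$ and $y$, and then to fill in between consecutive chain points at each refinement level.

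Since $\dist(x,y) < r$, pick $r'$ with $\dist(x,y) < r' < r$, so by strict monotonicity of $m$ one has $m(r') < m(r)$. The definition of the modulus applied at scale $r'$ gives a connected $L_0 \subset X$ containing $x,y$ with $\diam L_0 < m(r')$. Now fix $\eps_0 > \eps_1 > \cdots \searrow 0$ with $\eps_0 < r'$ and $\sum_{k \ge 0} m(\eps_k) \le \tfrac{1}{2}(m(r) - m(r'))$, which is possible since $m(\eps) \to 0$ as $\eps \to 0$.

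Construct chains by induction. At level $1$: connectedness of $L_0$ yields an $\eps_0$-chain $x = p_0^{(1)}, \ldots, p_{k_1}^{(1)} = y$ in $L_0$ with consecutive distances less than $\eps_0$; this uses the standard fact that in a connected metric space the relation ``joined by an $\eps$-chain'' is a clopen equivalence relation, forcing a single class. From level $n$ to level $n+1$: for each consecutive pair $p_i^{(n)}, p_{i+1}^{(n)}$ (at distance less than $\eps_{n-1}$), apply the modulus to obtain a connected $C_i^{(n)} \subset X$ of diameter less than $m(\eps_{n-1})$ containing both, then choose an $\eps_n$-chain inside $C_i^{(n)}$ and concatenate. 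Parameterizing each chain on a common nested dyadic grid in $[0,1]$ (padding sub-chains with repeated points to equalize lengths across levels) gives a sequence of maps $\phi_n$ from the dyadic rationals into $X$; by construction, whenever $\tau,\tau'$ lie in a common level-$n$ sub-interval, $\phi_N(\tau), \phi_N(\tau')$ both lie in some $C_i^{(n)}$ for all $N \ge n$, so $\dist(\phi_N(\tau),\phi_N(\tau')) < m(\eps_{n-1})$. Since $m(\eps_n) \to 0$, the sequence $\phi_n$ is uniformly Cauchy and extends continuously to $\ell : [0,1] \to \overline{X}$ with $\ell(0)=x$, $\ell(1)=y$.

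For the diameter bound, each $C_i^{(n+1)}$ shares a point with some $C_{i'}^{(n)}$ and has diameter less than $m(\eps_n)$, so it lies in the $m(\eps_n)$-neighborhood of $C_{i'}^{(n)}$; by induction the level-$n$ sets lie in the $\bigl(\sum_{k=0}^{n-1} m(\eps_k)\bigr)$-neighborhood of $L_0$. Thus $\ell([0,1])$ is contained in the $\bigl(\sum_{k \ge 0} m(\eps_k)\bigr)$-neighborhood of $\overline{L_0}$, and
\[
\diam \ell \;\le\; \diam L_0 + 2 \sum_{k \ge 0} m(\eps_k) \;<\; m(r') + (m(r) - m(r')) \;=\; m(r).
\]
The main technical difficulty is the coherent parameterization of the successive chains on a common dyadic grid, which is a standard Hahn--Mazurkiewicz-type bookkeeping issue; the main conceptual idea is the use of the strict inequality $\dist(x,y) < r$ and strict monotonicity of $m$ to create the ``slack'' $m(r) - m(r')$ that absorbs the iterated enlargements $m(\eps_k)$ introduced by refinement.
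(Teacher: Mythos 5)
The paper does not give its own argument but defers to Proposition~2.2 of the Douady--Hubbard Orsay notes; your proof is the standard chain-refinement (Hahn--Mazurkiewicz--type) construction and is essentially correct, and in particular the device of interpolating $r'<r$ to bank the slack $m(r)-m(r')$ against the iterated enlargements is exactly what is needed to land on the bound $m(r)$ rather than $m(r)+\eps$. Two small points deserve care. The intermediate claim that $\phi_N(\tau)$ and $\phi_N(\tau')$ both lie \emph{in} $C_i^{(n)}$ for all $N\ge n$ is an overstatement: the next-level sets $C_j^{(n+1)}$ are required only to lie in $X$, not in $C_i^{(n)}$, so later refinements can leave $C_i^{(n)}$. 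What your neighborhood induction actually establishes is that both points lie within $\sum_{k\ge n} m(\eps_k)$ of $C_i^{(n)}$, which tends to $0$ and is all that the uniform-Cauchy estimate needs. Second, your limit curve $\ell$ lands a priori in $\overline X$ rather than in $X$; this is automatic in the paper's setting ($X$ compact, e.g.\ $X=\partial W$), but worth flagging since the proposition as written does not assume $X$ closed and the intended conclusion is a path in $X$.
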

For the proof, see Proposition 2.2 of \cite{Orsay}.  The following notion was introduced in \cite{BRY}. 

\begin{defn}[{\bf Carath{\'e}odory modulus}] Let $( W,w)$ be a pointed simply-connected domain. A strictly increasing function
$\eta:(0,a)\to\RR$ is called a {\it Carath{\'e}odory modulus} if
 for every crosscut $\gamma$ with $\diam(\gamma)<r<a$ we have $\diam N_\gamma<\eta(r)$.
\end{defn}
We note (see e.g. \cite{BRY}) that this alternative modulus also allows to characterise local connectivity.  
\begin{prop}
There exists a Carath{\'e}odory modulus  $\eta(r)$ such that $\eta(r)\searrow 0$ when $r\searrow 0$
 if and only if the boundary $\partial W$ is locally connected.
\end{prop}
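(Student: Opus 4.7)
My plan is to prove both implications via the \emph{impression map} $\pi:\widehat W\to\overline W$, which is the identity on $W\subset\widehat W$ and sends each prime end to its impression, combined with the homeomorphism $\hat\phi:\widehat W\to\overline\DD$ from the Carath\'eodory Theorem and the identification of $\widehat W$ (with its topology) with the completion of $W$ under the crosscut distance $\dist_C$.

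For the direction ($\Rightarrow$), the first step is to use the modulus to rule out non-trivial impressions: if $(N_{\gamma_j})$ is any fundamental chain representing a prime end $p$, then $\diam\gamma_j\to 0$ forces $\diam\overline{N_{\gamma_j}}\le\eta(\diam\gamma_j)\to 0$, so $\cI(p)=\bigcap_j\overline{N_{\gamma_j}}$ is a single point and $\pi$ is well-defined. The same estimate shows that $\pi$ is continuous: every basic neighborhood $\widetilde{N_\tau}$ of a prime end satisfies $\pi(\widetilde{N_\tau})\subset\overline{N_\tau}$, whose diameter is bounded by $\eta(\diam\tau)$. Since $\hat\phi$ identifies the prime ends of $W$ with $S^1\subset\overline\DD$, and a standard argument (compactness of $\widehat W$ plus the fact that impressions lie in $\partial W$) gives $\pi(\{\text{prime ends}\})=\partial W$, we exhibit $\partial W$ as a continuous image of $S^1$ under $\pi\circ\hat\phi^{-1}|_{S^1}$; \lemref{cont-lc} then yields the required local connectivity.

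For the direction ($\Leftarrow$), I would invoke the Carath\'eodory Theorem for locally connected domains to obtain a continuous extension $\overline f:\overline\DD\to\overline W$; equivalently, every prime end has trivial impression and $\pi=\overline f\circ\hat\phi$ is continuous. By the theorem preceding \thmref{cond-comp-metric}, the $\dist_C$-completion of $W$ is homeomorphic to $\widehat W\cong\overline\DD$ and hence compact, so $\pi$ is uniformly continuous. Given $\varepsilon>0$, pick $\delta>0$ so that $\dist_C(x,y)<\delta$ implies $|\pi(x)-\pi(y)|<\varepsilon$. For any crosscut $\gamma$ with $\diam\gamma<\delta$ and any $u,v\in N_\gamma$, the first bullet in the definition of the crosscut distance gives $\dist_C(u,v)\le\diam\gamma<\delta$, whence $|u-v|=|\pi(u)-\pi(v)|<\varepsilon$, so $\diam N_\gamma\le\varepsilon$. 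Consequently $\eta_0(r):=\sup\{\diam N_\gamma:\diam\gamma<r\}\searrow 0$ as $r\searrow 0$, and $\eta(r):=\eta_0(r)+r$ is a strictly increasing Carath\'eodory modulus vanishing at $0$.

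The main technical obstacle is securing continuity of $\pi$: in direction ($\Rightarrow$) it drops out directly from the modulus together with the definition of the topology on $\widehat W$, whereas in direction ($\Leftarrow$) it is repackaged from the classical Carath\'eodory extension theorem for locally connected boundaries. Once continuity is in hand, each implication reduces to a short compactness/uniform-continuity argument, whose key geometric input is the simple observation that whenever $u,v\in N_\gamma$, the crosscut $\gamma$ itself certifies $\dist_C(u,v)\le\diam\gamma$.
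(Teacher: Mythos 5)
The paper does not include a proof of this proposition; it is stated with a citation to \cite{BRY}, so there is no in-text argument to compare against. Your proof, however, is correct and is the natural argument. A few points worth highlighting. In the forward direction, your key step is that a vanishing Carath\'eodory modulus simultaneously forces all impressions to be singletons and gives continuity of the impression map $\pi$ at prime ends, after which \lemref{cont-lc} applied to $\pi\circ\hat\phi^{-1}|_{S^1}$ delivers local connectivity of $\partial W$; the surjectivity $\pi(\{\text{prime ends}\})=\partial W$ you invoke is the standard compactness argument and is fine. In the reverse direction, the crucial geometric input is exactly what you flag at the end: for $u,v\in N_\gamma$ the crosscut $\gamma$ itself witnesses $\dist_C(u,v)\le\diam\gamma$, so uniform continuity of $\pi$ (available because $\widehat W$ is compact and, under the local-connectivity hypothesis, $\pi=\overline f\circ\hat\phi$ is continuous) translates directly into a bound on $\diam N_\gamma$ in terms of $\diam\gamma$. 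Two small technicalities you glossed over, neither of which affects the argument: the paper's definition gives $\diam N_\gamma<\eta(r)$ only for $r>\diam\gamma$, so the clean inequality $\diam \overline{N_{\gamma_j}}\le\eta(\diam\gamma_j)$ should really be a one-sided limit, but since you only need $\diam N_{\gamma_j}\to 0$ this is harmless; and $\dist_C$ is literally a metric on $W\setminus\{w\}$, which is also harmless here since $w\notin N_\gamma$ by definition. Your explicit construction $\eta(r)=\sup\{\diam N_\gamma:\diam\gamma<r\}+r$ neatly handles strict monotonicity. Overall this is a complete and correct proof.
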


\subsection{Computable Carath{\'e}odory Theory}
 To simplify the exposition, we present the results for bounded domains only.
However, all the theorems we formulate below may be stated for general simply-connected domains on the 
Riemann sphere $\riem$. In this case,
the spherical metric on $\riem$ would have to be used in the statements instead of the Euclidean one.

The following is shown in \cite{BRY}:
\begin{thm}
\label{main1}
Suppose $( W,w)$ is a bounded simply-connected pointed domain. Suppose the Riemann mapping
$$\phi: W\to\DD\text{ with }\phi(w)=0,\;\phi'(w)>0$$
is computable.
Then there exists an algorithm $\cA$ which, given a representation of a prime end $p\in\widehat W$
computes the value of $\hat\phi(p)\in S^1.$

\end{thm}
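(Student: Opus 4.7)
The plan is to leverage the computable-metric structure on the Carath\'eodory completion $\widehat W$ given by Theorem~\ref{cond-comp-metric}. Since $\phi$ is computable, Theorem~\ref{comp-riem-map} ensures that $W$ is lower-computable open, $\partial W$ is lower-computable closed, and $w$ is computable. These are precisely the oracles required by Theorem~\ref{cond-comp-metric}, so $(\widehat W,\dist_C)$ becomes a computably compact computable metric space whose ideal points are the rational points of $W$; on these ideal points $\hat\phi$ agrees with the computable map $\phi$ and is therefore itself uniformly computable.

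By Carath\'eodory's Theorem, $\hat\phi:\widehat W\to\overline\DD$ is continuous (in fact a homeomorphism). I would then invoke the standard effective Heine--Cantor principle: a function on a dense computable sequence of ideal points of a computably compact space that extends continuously to the whole space in fact extends to a computable map, and admits an effectively computable modulus of continuity $\delta:\N\to\QQ_{>0}$ with $|\hat\phi(x)-\hat\phi(y)|<2^{-n}$ whenever $\dist_C(x,y)<\delta(n)$. Operationally, $\delta(n)$ is produced by a bounded search: test candidates $\delta=2^{-k}$ for increasing $k$, verifying on a finite $(\delta/2)$-net of ideal points (whose existence is guaranteed by computable compactness) that $|\phi(s)-\phi(s')|<2^{-(n+1)}$ for every pair with $\dist_C(s,s')<\delta$; a positive test certifies the modulus on all of $\widehat W$ by continuity and density, and a sufficient $k$ exists by abstract uniform continuity.

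Equipped with the modulus, the algorithm $\cA$ proceeds as follows on input $n$ and an oracle representation $(t_j)$ of $p$ in the sense of Proposition~\ref{specify-end}. It first computes $\delta:=\delta(n+1)$, then reads successive $t_j$'s until finding one with $\diam t_j<\delta$; this happens at some finite $j$ because $\diam t_j\to 0$. From the rational polygonal description of $t_j$ the algorithm effectively locates a rational point $z_j\in N_{\tau_j}$ (for instance, a rational point sufficiently close to $t_j(0.5)$). Both $p$ and $z_j$ lie in the neighborhood $\widetilde N_{\tau_j}\subset\widehat W$, whose $\dist_C$-diameter is bounded by $\diam\tau_j\le\diam t_j<\delta$, so the modulus gives $|\phi(z_j)-\hat\phi(p)|<2^{-(n+1)}$. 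Computing $\phi(z_j)$ to precision $2^{-(n+1)}$ using computability of $\phi$, the algorithm outputs a $2^{-n}$-approximation of $\hat\phi(p)\in S^1$.

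The principal obstacle is the effective modulus of continuity invoked in the second step. Continuity of $\hat\phi$ and compactness of $\widehat W$ yield only the abstract existence of a modulus; its effectivization requires the simultaneous computability of $\dist_C$ on pairs of ideal points, of $\phi$ on rational interior points, and of finite nets in $\widehat W$, all of which are furnished by the combination of Theorems~\ref{comp-riem-map} and \ref{cond-comp-metric}. Once this is assembled, the remainder of the argument is the routine packaging of the effective extension theorem for continuous maps on computably compact spaces.
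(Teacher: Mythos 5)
Your proposed reduction founders at the very first step. You claim that computability of $\phi$ (via Theorem~\ref{comp-riem-map}) furnishes ``precisely the oracles required by Theorem~\ref{cond-comp-metric}'' and hence the computably compact computable metric structure on $(\widehat W,\dist_C)$. This is not so. Hertling's theorem gives you that $\partial W$ is a \emph{lower}-computable closed set, whereas the ``oracle for $\partial W$'' assumed in Theorem~\ref{cond-comp-metric} is, by the definition that precedes its statement, a Hausdorff-distance approximation oracle --- i.e.\ full computability (upper \emph{and} lower) of $\partial W$ as a compact set. These are genuinely different: lower-computability lets you enumerate balls meeting $\partial W$, but gives no way to certify that a given ball is disjoint from $\partial W$, which is needed both to compute the crosscut distance $\dist_C$ and to verify finite covers for computable compactness. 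The paper flags exactly this asymmetry in the remark following Theorem~\ref{main2}: ``computability of $\partial W$ implies lower computability of $W$ and $\partial W$, but not vice versa,'' so the hypotheses of Theorem~\ref{main2} (and of Theorem~\ref{cond-comp-metric}) are strictly stronger than those of Theorem~\ref{main1}. Your plan therefore proves a weaker statement than what is claimed.

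The downstream machinery inherits the same defect: the effective Heine--Cantor search you run requires testing conditions of the form $\dist_C(s,s')<\delta$ on a computable net, which presupposes the very computable metric structure you have not established. Without it there is no modulus of continuity to extract, and your final step (``find $t_j$ with $\diam t_j<\delta$, output $\phi(z_j)$'') has no way to choose $\delta$. A correct proof must instead work directly with the given representation $(t_j)$ of the single prime end $p$ rather than a global modulus on $\widehat W$: one exploits that $\phi$ is computable to examine the images of the crosscuts and crosscut neighborhoods $\phi(N_{\tau_j})$ in $\DD$ --- a decreasing sequence of crosscut neighborhoods of the disk not containing $0$, whose diameters tend to $0$ because $\hat\phi$ is a homeomorphism --- and certifies, using only semi-decisions about $W$ and rational sample points on $t_j\cap W$ (all available from lower-computability), when $\phi(N_{\tau_j})$ has shrunk below $2^{-n}$; at that point $\phi(t_{j+1}(0.5))$ is a valid output. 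This per-prime-end argument never needs a uniform modulus and never needs to upper-compute $\partial W$, which is why Theorem~\ref{main1} holds under weaker hypotheses than Theorem~\ref{main2}.
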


\noindent
In view of \thmref{comp-riem-map},  we have:
\begin{cor}
\label{cormain1}
Suppose we are given oracles for $W$ as a lower-computable open set,  for $\partial W$ as a lower-computable closed set,
and an oracle for the value of $w$ as well.
Given a representation of a prime end $p\in\widehat W$, the value $\hat\phi(p)\in S^1$ is uniformly computable.
\end{cor}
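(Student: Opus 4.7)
The plan is to derive the corollary by composing the two previously stated results in a straightforward manner. Specifically, given oracles for $W$ (as a lower-computable open set), for $\partial W$ (as a lower-computable closed set), and for the point $w$, the hypotheses of part (i) of Theorem \ref{comp-riem-map} are satisfied. Invoking the implication (i) $\Longrightarrow$ (ii) of that theorem, we extract from these oracles a computable description of the normalized Riemann mapping
\[
\phi:W\to\DD,\qquad \phi(w)=0,\;\phi'(w)>0.
\]
This puts us exactly in the setting of Theorem \ref{main1}, which then provides an algorithm $\cA$ that, given a representation of a prime end $p\in\widehat W$, outputs approximations to $\hat\phi(p)\in S^{1}$ with arbitrary precision.

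To conclude, I would then verify that the composition of these two algorithmic procedures preserves uniformity in the inputs. The algorithm from Theorem \ref{comp-riem-map} produces a name for $\phi$ as a computable function uniformly from the three given oracles; the algorithm from Theorem \ref{main1} takes as input a name for $\phi$ together with an oracle for $p$ and produces $\hat\phi(p)$ uniformly. Chaining the two reductions yields a single algorithm that, with access to the oracles for $W$, $\partial W$, $w$, and for a representation of $p$, computes $\hat\phi(p)$ to any prescribed precision.

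The only subtlety I anticipate is the bookkeeping of uniformity across the composition. Theorem \ref{comp-riem-map} is a qualitative equivalence, and one has to make sure its proof (as cited from Hertling's work) in fact produces $\phi$ \emph{uniformly} as a computable function from the three given oracles, rather than merely asserting its computability case by case. Once this is checked, the composition with Theorem \ref{main1} is immediate, since Theorem \ref{main1} is already phrased in a uniform manner: the algorithm $\cA$ there depends only on having a name for $\phi$ and a representation of $p$. No further analytic input is needed, and in particular no additional information about $\partial W$ (such as a modulus of local connectivity) is required, because the conclusion concerns only the extension $\hat\phi$ to the Carathéodory compactification $\widehat W$, not to $\overline W$.
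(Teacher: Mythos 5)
Your proposal is correct and is exactly what the paper has in mind: the paper's ``proof'' of Corollary~\ref{cormain1} is the single phrase ``In view of Theorem~\ref{comp-riem-map}, we have:'', which is precisely your composition of Theorem~\ref{comp-riem-map} (i)$\Rightarrow$(ii) with Theorem~\ref{main1}. Your remark about checking that Hertling's construction yields $\phi$ uniformly from the three oracles is the right subtlety to flag, and indeed it holds, so nothing further is needed.
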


\noindent
To state a ``global'' version of the above computability result, we use the structure of a computable metric space:
\begin{thm}[\cite{BRY}]
\label{main2}
In the presence of oracles for $w$ and for $\partial W$, both the Carath{\'e}odory extension
$$\hat\phi:\widehat W\to\overline\DD\text{ and its inverse }\hat f\equiv \hat\phi^{-1}:\overline\DD\to\widehat W$$
are computable, as functions between computable metric spaces.
\end{thm}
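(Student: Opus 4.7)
The plan is to promote the pointwise computability results (Theorem~\ref{comp-riem-map} for interior points, Corollary~\ref{cormain1} for prime ends) to the functional computability of $\hat\phi$ and $\hat f$ as maps between the computable metric spaces $\widehat W$ (with crosscut distance, from Theorem~\ref{cond-comp-metric}) and $\overline\DD$. Both spaces are computably compact, and $\hat\phi$ is a continuous bijection by Carath\'eodory's theorem. I will invoke the standard criterion that, for a continuous $F:X\to Y$ between computable metric spaces with $X$ computably compact, $F$ is computable if and only if it is computable on a uniformly computable dense subset of $X$ and admits an effective, upper-computable modulus of uniform continuity.

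For $\hat\phi$: the dense-subset computability is immediate, since the ideal points of $\widehat W$ are rationals of $W$, on which $\hat\phi$ coincides with $\phi$, computable by Theorem~\ref{comp-riem-map}. The essential step is to upper-compute the oscillation $\omega_{\hat\phi}(r):=\sup\{|\hat\phi(p)-\hat\phi(p')| : \dist_C(p,p')<r\}$ at rational $r>0$. I will cover $\widehat W$ by finitely many ideal balls $B(s_i,r/3)$ (using computable compactness), and for each $i$ upper-bound $\diam\hat\phi(\overline{B(s_i,r/3)})$ by combining two subroutines: (i) for each $s'\in\cS$ with $\dist_C(s_i,s')<r$, compute $|\phi(s')-\phi(s_i)|$ to any desired precision via Theorem~\ref{comp-riem-map}; (ii) for each prime end $p\in\overline{B(s_i,r/3)}$, extract a representation of $p$ by rational polygonal crosscuts contained in $B(s_i,r)$, available via Proposition~\ref{specify-end} together with the oracle for $\partial W$ as a lower-computable closed set, and feed it to Corollary~\ref{cormain1} to estimate $|\hat\phi(p)-\phi(s_i)|$. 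Taking the supremum over (i) and (ii) yields an upper bound on $\omega_{\hat\phi}(r)$ which tends to $0$ as $r\to 0$ by continuity, witnessing the required effective modulus.

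For $\hat f$: the argument runs symmetrically. The ideal points of $\overline\DD$ consist of rationals in $\DD$ and effectively enumerated rationals in $S^1$; on interior rationals, $\hat f$ restricts to the computable $f$ by Theorem~\ref{comp-riem-map}. The oscillation of $\hat f$ on ideal balls of $\overline\DD$ is upper-computable using the computable crosscut metric on $\widehat W$ from Theorem~\ref{cond-comp-metric} applied to $f$-images of interior rationals, with Corollary~\ref{cormain1} handling boundary points. The effective modulus is then obtained as before.

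The main obstacle will be the uniform algorithmic construction in step (ii): given an ideal ball $B(s_i,r/3)\subset\widehat W$, effectively producing rational polygonal crosscuts that lie inside $B(s_i,r)$, represent candidate prime ends in its closure, and shrink in diameter, using only a lower-computable description of $\partial W$. This requires a careful dovetailing argument marrying Proposition~\ref{specify-end} with Corollary~\ref{cormain1}, uniformly across all ball parameters and all candidate prime ends, with the $\partial W$ oracle feeding polygonal approximations on demand. Once this is carried out, both computability statements follow from the criterion recalled above.
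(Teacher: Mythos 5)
Your high-level framework is the right one: reduce to computability of $\hat\phi$ (resp.\ $\hat f$) on the ideal points of $\widehat W$ (resp.\ $\overline\DD$), where the map coincides with the already-computable $\phi$ (resp.\ $f$) of Theorem~\ref{comp-riem-map}, and then supply an effective modulus of uniform continuity, using computable compactness of $\widehat W$ from Theorem~\ref{cond-comp-metric}. That reduction is standard and sound. The gap is in how you propose to obtain the modulus. Your plan is to upper-compute the oscillation $\omega_{\hat\phi}(r)$ by evaluating $\hat\phi$ at ideal interior points (step (i)) and at prime ends fed through Corollary~\ref{cormain1} (step (ii)) and ``taking the supremum.'' This cannot produce an \emph{upper} bound. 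Evaluating a continuous function at a (countable, enumerable) dense family of points and taking the supremum of the observed values yields a \emph{lower}-computable real, not an upper-computable one; and there is no countable enumeration of prime ends to dovetail over in the first place, so step (ii) is not an algorithm. The ``main obstacle'' you flag -- producing polygonal crosscut representations on demand -- misidentifies the problem: even a perfect supply of representations only lets you compute $\hat\phi(p)$ for the particular $p$ you are handed, and gives no a priori bound on $\sup_{p\in B}|\hat\phi(p)-\phi(s_i)|$. This is not a technicality: Theorems~\ref{main3} and~\ref{main4} show precisely that computability of $f$ on a dense set does \emph{not} by itself yield a computable modulus for the boundary extension.

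What is actually needed, and what the proof in \cite{BRY} supplies, is an \emph{a priori} geometric bound on the modulus of uniform continuity, computable directly from the oracles without evaluating $\hat\phi$ or $\hat f$ at any boundary point. For $\hat f:\overline\DD\to\widehat W$, a length--area (Gr\"otzsch) argument shows that for $z_1,z_2\in\DD$ with $|z_1-z_2|<\delta$ one can find a crosscut or closed curve of $W$ separating $f(z_1),f(z_2)$ from $w$ with diameter $O(\sqrt{\operatorname{Area}(W)/\log(1/\delta)})$, so $\dist_C(\hat f(z_1),\hat f(z_2))$ is bounded by a quantity computable from $\operatorname{Area}(W)$. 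For $\hat\phi:\widehat W\to\overline\DD$, if $\dist_C(x_1,x_2)<\delta$ there is a separating curve $\gamma$ of Euclidean diameter comparable to $\delta$, and the Beurling projection theorem bounds the harmonic measure at $w$ of the crosscut neighborhood $N_\gamma$ by $O(\sqrt{\delta/\dist(w,\partial W)})$; since $\hat\phi$ transports this harmonic measure to normalized arc length on $S^1$, this bounds $|\hat\phi(x_1)-\hat\phi(x_2)|$. Both constants ($\operatorname{Area}(W)$, $\dist(w,\partial W)$) are computable from the oracles for $w$ and for $\partial W$. With such a computable modulus in hand, your criterion then closes the argument; but the modulus must come from these quantitative potential-theoretic estimates, not from sampling the map.
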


\begin{rem}
The assumptions of \thmref{main2} are stronger than those of \corref{cormain1}: computability of $\partial W$
implies lower computability of $W$ and $\partial W$, but not vice versa.
\end{rem}

\noindent
For the particular but important case when the domain $ W$ has a locally connected boundary, it is natural to ask what boundary information is required to make  the  extended map $\overline f:\overline\DD\to\overline W$  computable.  A natural candidate to consider is a description of the modulus of local connectivity.  Indeed, as it was shown in \cite{mcn, mcnicholl2014computing}, a computable local connectivity modulus implies computability of the Carath\'eodory extension.  Such a modulus of local connectivity, however,  turned out to be unnecessary, as shown by the following two results proven  in  \cite{BRY}.  The first one says that the right  boundary information to consider is  Carath\'eodory modulus $\eta(r)$, and the second one tells us that the two moduli, although classically equivalent, are indeed computationally different. 
\begin{thm}
\label{main3}
Suppose $( W,w)$ is a pointed simply-connected bounded domain with a locally connected boundary. Assume that  the holomorphic bijection
$$f:\DD\to W\text{ with }f(0)=w,\; f'(0)>0$$
is computable.

Then the boundary extension
$$\overline f:\overline\DD\to\overline W$$
is computable if and only if there exists a computable Carath{\'e}odory modulus $\eta(r)$ with
$\eta(r)\searrow 0$ as $r\searrow 0$.
\end{thm}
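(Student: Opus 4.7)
The plan is to prove the two implications separately, exploiting the computable metric space structure on the Carath\'eodory completion $\widehat W$ from \thmref{cond-comp-metric} together with the computability of the extensions $\hat\phi$ and $\hat f$ provided by \thmref{main2}.

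For the direction $(\Rightarrow)$, I would argue as follows. Assume $\bar f$ is computable. Then $\partial W = \bar f(S^1)$ is a computable compact set as the image of a computably compact set under a computable map, and $w = f(0)$ is a computable point, so the hypotheses of \thmref{main2} are met and $\hat\phi : \widehat W \to \overline\DD$ is computable between computable metric spaces. Consider the projection $\pi := \bar f \circ \hat\phi : \widehat W \to \overline W$, which sends each prime end to its (necessarily trivial, by \propref{trivial-impression}) impression; as a composition of computable maps it is computable. Since $\widehat W$ is computably compact, $\pi$ admits a computable modulus of uniform continuity $\omega_\pi$: for each rational $\varepsilon > 0$, one computes a rational $\omega_\pi(\varepsilon) > 0$ such that $\dist_C(p,q) < \omega_\pi(\varepsilon)$ forces $|\pi(p) - \pi(q)| < \varepsilon$. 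The key geometric observation is that whenever $\gamma$ is a crosscut with $\diam\gamma \le r$, the curve $\gamma$ itself is an admissible separator in the definition of $\dist_C$ between any two points of $N_\gamma$, so the closure of $N_\gamma$ inside $\widehat W$ has $\dist_C$-diameter at most $r$; consequently $r \le \omega_\pi(\varepsilon)$ forces $\diam N_\gamma \le \varepsilon$. I would therefore set
\begin{equation*}
\eta(r) := r + \inf\{2^{-k} : \text{a certifiable rational lower bound on } \omega_\pi(2^{-k}) \text{ exceeds } r\},
\end{equation*}
which is uniformly computable in $r$, strictly increasing (thanks to the $+r$ correction), and tends to $0$ as $r \to 0$; by construction it is a Carath\'eodory modulus.

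For the converse direction $(\Leftarrow)$, given a computable Carath\'eodory modulus $\eta$, the plan is to compute $\bar f(z)$ to precision $2^{-n}$ as follows. First compute rational $r > 0$ with $\eta(r) < 2^{-n-1}$. If $z \in \DD$, simply output $f(z)$, which is computable by hypothesis. For $z \in S^1$, consider a shrinking family of rational chords $\sigma_j \subset \overline\DD$ symmetric around $z$ with $\diam\sigma_j \to 0$. Since $\bar f$ is classically continuous on $\overline\DD$, we have $\diam f(\sigma_j) \to 0$; once $j$ is large enough that this diameter is certifiably below $r$, the image $f(\sigma_j)$ is a crosscut of $W$ whose crosscut-neighborhood has Euclidean diameter less than $\eta(r) < 2^{-n-1}$ and contains $\bar f(z)$ (because $\bar f(\overline{D_{\sigma_j}}) = \overline{N_{f(\sigma_j)}}$), so any rational interior point $f(q)$ with $q$ strictly inside $D_{\sigma_j}$ lies within $2^{-n}$ of $\bar f(z)$.

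The main obstacle will be the \emph{certification} step in direction $(\Leftarrow)$: although $f$ is computable on any compact subset of $\DD$, the chord $\sigma_j$ has its endpoints on $S^1$, so the diameter contribution from the image of small endpoint caps cannot be controlled by the computability of $f$ alone, and one ends up only lower-computing $\diam f(\sigma_j)$. My plan is to split $\sigma_j$ into a compact middle segment (on which $\diam f$ is directly upper-computable) together with two small endpoint caps, and to bound the image of each cap by enclosing it within an auxiliary finer chord whose image-diameter is in turn controlled by applying $\eta$ a second time at a smaller scale. This two-level use of the Carath\'eodory modulus --- first at scale $r$ for the main chord $\sigma_j$, then at a refined scale for the endpoint caps --- should yield a computable rational upper bound on $\diam f(\sigma_j)$, while termination of the outer search over $j$ is guaranteed by the classical continuity of $\bar f$ even without an a priori computable rate.
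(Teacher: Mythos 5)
The paper itself does not spell out a proof of this theorem (it is quoted from the reference [BRY]), so I will assess your argument on its own terms.

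Your forward direction ($\Rightarrow$) is essentially sound: computability of $\overline f$ gives a computable compact $\partial W=\overline f(S^1)$ and a computable $w=f(0)$, so \thmref{main2} applies; the composite $\pi=\overline f\circ\hat\phi$ is a computable map from the computably compact space $\widehat W$ and therefore has a computable modulus of uniform continuity, and your observation that $\gamma$ itself witnesses $\dist_C(p,q)\le\diam\gamma$ for $p,q\in N_\gamma$ correctly converts that modulus into a Carath\'eodory modulus. (One could also argue more elementarily without $\widehat W$: $\overline f$ and its inverse $\overline\phi$ both have computable moduli of continuity, a small crosscut of $W$ pulls back to a small crosscut of $\DD$, whose crosscut neighborhood in $\DD$ is comparably small by elementary geometry, and one pushes forward by $\overline f$; this avoids \thmref{cond-comp-metric} and \thmref{main2} entirely.) The precise formula for $\eta$ needs a little more care than what you wrote so that the ``certifiable lower bound'' test is actually decidable and the resulting $\eta$ is monotone, but this is routine.

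The converse direction ($\Leftarrow$), however, has a genuine gap, and you have correctly put your finger on exactly where it is: certifying an upper bound on $\diam f(\sigma_j)$ when $\sigma_j$ is a chord with endpoints on $S^1$. Your proposed two-level repair is circular. Bounding the image of an endpoint cap by $\diam N_{f(\tau)}\le\eta(\diam f(\tau))$ for a finer chord $\tau$ requires an upper bound on $\diam f(\tau)$, which has exactly the same endpoint-cap problem; iterating never terminates, and truncating the recursion with the trivial bound $\diam f(\tau)\le\diam W$ yields $\eta(\diam W)$, a fixed constant that does not shrink as the caps shrink (and indeed $\diam W$ may not even lie in the domain $(0,a)$ of $\eta$). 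So no finite amount of recursion produces a certified bound that tends to $0$, and the outer search over $j$ can never be made to halt with a certificate.

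The missing ingredient is that one should \emph{not} try to certify that the crosscut $f(\sigma)$ is small; one should instead build one's own crosscut $\gamma$ of $W$ with certifiably small diameter and apply $\eta$ to that. The tool that makes this possible is the Koebe distortion estimate $\dist(f(z),\partial W)\asymp(1-|z|)\,|f'(z)|$ (with absolute constants), which gives a \emph{computable two-sided estimate} for the distance to $\partial W$ even though $\partial W$ itself is not assumed computable. Concretely: take $\sigma$ a small computable crosscut of $\DD$ separating $z$ from $0$, truncate it to a compact arc $\sigma_0=\sigma\cap\{|u|\le 1-\delta\}$, and compute $\gamma_0=f(\sigma_0)$; shrink $\sigma$ and $\delta$ until $\diam\gamma_0<r/2$ \emph{and} the Koebe quantity $(1-|u|)|f'(u)|$ at each endpoint of $\sigma_0$ is certifiably $<r/8$ (both are now genuine upper bounds on computable quantities over a compact set, so the test is decidable, and it eventually succeeds because $\overline f$ is classically continuous, which forces $(1-|u|)|f'(u)|\to 0$ along $\sigma$). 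Then each endpoint of $\gamma_0$ is within $r/8$ of $\partial W$ and one can prolong $\gamma_0$ by two short arcs to an honest crosscut $\gamma$ of $W$ with $\diam\gamma<r$; hence $\diam N_\gamma<\eta(r)$, and with a small additional topological argument (checking that $\gamma$ still separates $f((1-\delta')z)$ from $w$, which reduces to the fact that $\gamma$ and $f(\sigma)$ share the separating middle arc $\gamma_0$) one gets $\overline f(z)\in\overline{N_\gamma}$ and outputs any computable point of $N_\gamma$. This Koebe step is the substantive content of the $(\Leftarrow)$ direction, and your proposal does not contain a substitute for it.
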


\begin{remark}
With routine modifications, the above result can be made \emph{uniform} in the sense that there is an algorithm which from a description of $f$ and $\eta$ computes a description of $\overline f$, and there is an algorithm which from a description of $\overline f$
computes a Carath{\'e}odory modulus $\eta$. See for example  \cite{Hertling}  for statements made in this generality.
\end{remark}

We note that the seemingly more ``exotic'' Carath{\'e}odory modulus cannot be replaced by the modulus of local connectivity in the above
statement:
\begin{thm}[\cite{BRY}]
\label{main4}
There exists a simply-connected domain $ W$ such that $\partial W$ is  locally-connected, $\overline W$ is computable,
and there exists a computable Carath{\'e}odory modulus $\eta(r)$, however, no computable modulus of local connectivity exists
for $\partial W$.
\end{thm}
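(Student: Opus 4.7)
My plan is to construct an explicit counterexample. Fix an r.e.\ non-recursive $A\subseteq \NN$, enumerated by stages $A_s$, and write $t(n)=\min\{s : n\in A_s\}$, with $t(n)=\infty$ when $n\notin A$. The domain will be $W=\DD\setminus\bigcup_n F_n$, where each $F_n$ is a small piecewise-linear ``decoration'' rooted at $\zeta_n=e^{i/n}$ and contained in a Euclidean disk $D_n=B(\zeta_n,c\cdot 2^{-n})$. I declare two candidate shapes for $F_n$: a \emph{benign} straight radial slit of length $2^{-n-1}$ (used when $n\notin A$), and a \emph{hairpin} Jordan arc (used when $n\in A$), also contained in $D_n$ and of comparable Hausdorff size, designed so that it contains two specific points $p_n,q_n$ with $|p_n-q_n|\le\eps_n:=2^{-n\cdot t(n)}$ while the unique sub-arc of $F_n$ joining them has diameter at least $2^{-n-2}$. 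The transition from slit to hairpin is performed adaptively starting at stage $t(n)$, by gradually bending the slit so that at stage $s$ the shape change is of Hausdorff size $O(2^{-s})$.

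I then check the three positive properties. For computability of $\overline{W}$: since every $F_n$ is contained in $D_n$ and has diameter $O(2^{-n})$ regardless of its shape, only finitely many indices (those $n$ with $2^{-n}>2^{-s}$) need to be resolved at precision $2^{-s}$, and the adaptive unveiling guarantees that ambiguity about these indices contributes at most $O(2^{-s})$ to the Hausdorff error. For local connectivity of $\partial W$: $\diam F_n\to 0$, so near every boundary point there is a small connected neighborhood. For a computable Carath\'eodory modulus $\eta(r)=Cr$: any crosscut $\gamma\subset W$ of Euclidean diameter $<r$ which has an $F_n$ in its crosscut neighborhood must have its bow reach past the radial depth of $F_n$, forcing $r\ge 2^{-n-1}$, so the enclosed crosscut neighborhood has diameter at most $\diam\gamma+\diam F_n\le Cr$ with a computable constant $C$ independent of $A$.

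Finally, to prove non-existence of a computable modulus of local connectivity $m$, I argue by contradiction. Given a computable $m$, I can compute $m(\eps_n)$ within precision $2^{-n-3}$. By the hairpin property, $m(\eps_n)\ge 2^{-n-2}$ precisely when the hairpin at index $n$ has been installed, i.e., precisely when $n\in A$; otherwise $m(\eps_n)\le\eps_n\ll 2^{-n-3}$. A single threshold comparison then decides $n\in A$, contradicting the non-recursiveness of $A$. The crucial obstacle lies in realizing the hairpin gadget: it must simultaneously (i) stay inside $D_n$ so that $K=\partial W$ remains a computable compact set, (ii) concentrate a topological defect of boundary-diameter $\gtrsim 2^{-n-2}$ into a Euclidean scale of order $\eps_n=2^{-n\cdot t(n)}$, and (iii) create no narrow neck inside $W$ that would allow a short crosscut to enclose a disproportionately large neighborhood --- this last point would spoil the linear Carath\'eodory bound and hence the asymmetry on which the whole construction rests.
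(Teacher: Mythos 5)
The paper does not reproduce the proof of this theorem --- it is cited from \cite{BRY} --- so I can only assess your proposal on its own merits, not line it up against the paper's argument.

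Your recursion-theoretic skeleton (encode an r.e.\ non-recursive set $A$ into the scales at which a modulus of local connectivity must degrade, while keeping the Carath\'eodory modulus under control) is the right idea and almost certainly the overall shape of the actual construction. But the central gadget, as you describe it, does not work, and you essentially admit this in your final paragraph without resolving it. A \emph{hairpin slit} is a Jordan arc of zero width. The narrow gap of width $\eps_n$ between its two arms is therefore a gap \emph{inside} $W$, not inside $\CC\setminus W$. Consequently the straight segment $[p_n,q_n]$ (or a small perturbation thereof) is a crosscut of $W$ of diameter $\approx\eps_n$, and the crosscut neighborhood it cuts off contains the entire bend of the hairpin, a region of diameter $\gtrsim 2^{-n-2}$. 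That forces $\eta(\eps_n)\gtrsim 2^{-n-2}$, so the Carath\'eodory modulus degrades at \emph{exactly} the same scales as the modulus of local connectivity, and the asymmetry your whole argument relies on collapses. This is not an engineering inconvenience to be handled later, as your point (iii) frames it --- it is a structural defect of any zero-width hairpin.

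What makes the theorem true is that the modulus of local connectivity is an intrinsic property of the \emph{set} $\partial W$, blind to which side of the boundary carries the domain, whereas the Carath\'eodory modulus is a one-sided property of the \emph{domain} $W$. To exploit this asymmetry the gadget must excise a set of \emph{positive area} --- a thin wedge or crescent attached to $\partial\DD$ whose two long sides come within $\eps_n$ of each other. Then the two close boundary points are separated by $\hat\CC\setminus W$, so every crosscut of $W$ joining the two sides must detour around the peninsula and has diameter comparable to the peninsula's length; the linear Carath\'eodory bound survives, while $m(\eps_n)$ still blows up. Replacing your slit by such a wedge is not a cosmetic change: it changes what $\overline W$ looks like and how the adaptive unveiling must be organized. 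Separately, your non-computability argument is circular as written --- you cannot ``compute $m(\eps_n)$ within precision $2^{-n-3}$'' without already knowing $t(n)$, i.e.\ without already knowing whether $n\in A$. This part is repairable by the standard dovetail: for each $n$ simultaneously enumerate $A_s$ and evaluate $m(2^{-ns})$ to accuracy $2^{-n-3}$ for increasing $s$; either $n$ appears in $A_s$, or $m(2^{-ns})$ certifiably drops below $2^{-n-2}$, and since $m$ is increasing the latter rules out any later installation of the gadget. Incorporating both fixes would make this a viable sketch.
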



Finally, we turn to computational complexity questions in the cases when $\overline \phi$ or $\overline f$ are computable. Intuitively, when the boundary of the domain has a geometrically complex structure, one expects the Carath\'eodory extension to also be computationally complex.  Using this idea,  in \cite{BRY} it is shown that:
\begin{thm}
\label{main5}
Let $q:\NN\to\NN$ be any computable function. There exist Jordan domains $ W_1\ni 0$, $ W_2\ni 0$ such that the following holds:
\begin{itemize}
\item the closures $\overline W_1$, $\overline W_2$ are computable;
\item the extensions $\overline\phi:\overline W_1\to\overline\DD$ and  $\overline f:\overline \DD\to\overline W_2$  are both computable functions;
\item the time complexity of $\overline f$ and $\overline\phi$ is bounded from below by $q(n)$ for large enough values of $n$.
\end{itemize}

\end{thm}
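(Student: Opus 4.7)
We construct $W_2$ and prove the lower bound for $\overline f$; one may take $W_1=W_2$, with the bound for $\overline\phi$ then following from $\overline\phi=(\overline f)^{-1}$ by a standard inverse-function reduction (at the cost of a polynomial overhead absorbed into $q$, the Carath\'eodory extensions being mutually inverse homeomorphisms of computable metric spaces). The domain $W_2$ is $\DD$ with a sequence of thin inward rectangular ``fjords'' $F_n$ attached to $\partial\DD$, the $n$-th fjord having aspect ratio $q(n)$. The preimage arcs $A_n=\overline\phi(F_n)\subset S^1$ are then of length $\asymp\exp(-\pi q(n))$, forcing any algorithm computing $\overline f$ to query its input oracle for $\zeta\in S^1$ to depth $\Omega(q(n))$ in order to produce the required output.

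\textbf{Construction and computability.} WLOG assume $q$ is monotone with $q(n)\ge 2$. Place $p_n:=e^{i 2^{-n}}\in\partial\DD$. At each $p_n$ attach an inward rectangular fjord $F_n$ of angular width $w_n=2^{-n}/q(n)$ and radial depth $d_n=2^{-n}$; smooth each fjord's four corners with small rational circular arcs so that the resulting boundary is a Jordan curve. The fjords are pairwise disjoint (since $q(n)\ge 2$), and we let $W_2\ni 0$ denote the resulting domain. Since $q$ is computable, $\partial W_2$ admits a computable parametrisation, so $\overline{W_2}$ is a computable compact set. To invoke Theorem~\ref{main3} we exhibit a computable Carath\'eodory modulus: any crosscut of diameter $r$ which enters $F_n$ satisfies $w_n\le r$, and its crosscut neighborhood has diameter at most $r+d_n$. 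Writing $n_0(r):=\min\{n:w_n\le r\}$,
$$
\eta(r)\;=\;r\,+\,2^{-n_0(r)}
$$
defines a computable, increasing Carath\'eodory modulus with $\eta(r)\to 0$ as $r\to 0$. Hence $\overline f$ is computable.

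\textbf{Lower bound and main obstacle.} The conformal modulus of $F_n$ is $d_n/w_n=q(n)$, and the classical extremal-length estimate for long thin quadrilaterals gives the uniform asymptotic $|A_n|\asymp\exp(-\pi q(n))$. Set $z^*_n:=p_n-i\,d_n/2$ (a rational point in $F_n$ at depth $d_n/2$) and put $\zeta^*_n:=\overline\phi(z^*_n)\in A_n$, which is computable by Theorem~\ref{main2}. Pick a computable $\zeta^{**}_n\in S^1\setminus A_n$ with $|\zeta^*_n-\zeta^{**}_n|\le 2|A_n|$, obtained as a rational approximation lying just outside the nearest endpoint of $A_n$. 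Then $\overline f(\zeta^*_n)=z^*_n$ lies at depth $d_n/2$ inside $F_n$ while $\overline f(\zeta^{**}_n)\in\partial\DD$ lies near $p_n$, so $|\overline f(\zeta^*_n)-\overline f(\zeta^{**}_n)|\ge d_n/2=2^{-n-1}$. Any algorithm computing $\overline f$ at output precision $2^{-n-3}$ must therefore produce distinct approximations on these two inputs, hence must query its oracle at some depth $k$ with $2^{-k}<|A_n|$, i.e.\ $k=\Omega(q(n))$. Since each such query costs $\Omega(k)$ time units, $T_{\overline f}(n+3)=\Omega(q(n))$; after a routine rescaling of $q$ (applying the construction to $q(\cdot+3)$ and shifting back) this gives the desired $T_{\overline f}(N)\ge q(N)$ for large $N$. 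The principal technical obstacle is the \emph{uniform} validity of $|A_n|\asymp\exp(-\pi q(n))$ in the presence of infinitely many other fjords and the corner-smoothings: one must show that these collectively perturb the conformal modulus of $F_n$ by at most a bounded multiplicative factor, independent of $n$. This is handled by a careful but standard Koebe-distortion argument treating the rest of $\partial W_2$ as a bounded conformal perturbation of the basic thin-rectangle extremal-length computation.
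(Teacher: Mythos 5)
The outline (thin inward fjords, Carath\'eodory modulus, exponential extremal-length decay) is a plausible skeleton, but as written the argument has two genuine gaps, one quantitative and one structural.

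\textbf{The estimate $|A_n|\asymp\exp(-\pi q(n))$ is false.} The arc $A_n=\hat\phi(\partial F_n\cap\partial W_2)$ has length $2\pi\cdot\omega(0,\partial F_n,W_2)$, the harmonic measure of the fjord boundary seen from the base point. A Brownian path from $0$ reaches the mouth of $F_n$ (an arc of width $w_n$ at distance $\asymp 1$) with probability $\asymp w_n$, and once there it exits through $\partial F_n$ with probability bounded below; by the maximum principle it also cannot exit through $\partial F_n$ with probability much larger than $w_n$. Hence $|A_n|\asymp w_n=2^{-n}/q(n)$, which is polynomially, not exponentially, small. With your choice of $\zeta^{**}_n$ just outside $A_n$ this gives $|\zeta^*_n-\zeta^{**}_n|\lesssim 2^{-n}/q(n)$, forcing oracle depth only $\asymp n+\log q(n)$ at output precision $n+O(1)$ — nowhere near $q(n)$. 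What is exponentially small is not $|A_n|$ but the sub-arc of $A_n$ mapping to the \emph{deep} part of the fjord: the length of $\hat\phi(\{z\in\partial F_n:\text{depth}\ge y\})$ is $\asymp w_n\exp(-\pi y/w_n)$ (from the explicit half-strip map $z\mapsto\cos(\pi z/w_n)$ together with the harmonic-measure kernel in $\HH$). The proof can be repaired by choosing $\zeta^{**}_n$ to be the preimage of a point on the \emph{same} wall at depth $d_n/4$, so that $|\zeta^*_n-\zeta^{**}_n|\lesssim w_n\exp(-\pi q(n)/4)$ while $|\overline f(\zeta^*_n)-\overline f(\zeta^{**}_n)|\ge d_n/4$. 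With this fix the $\overline f$ lower bound goes through.

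\textbf{The ``standard inverse-function reduction'' for $\overline\phi$ does not exist.} Taking $W_1=W_2$ and invoking $\overline\phi=(\overline f)^{-1}$ does not transfer the lower bound: a function and its inverse can have wildly different complexities, and that is exactly what happens here. The very place where $\overline f$ is expanding (the tiny sub-arcs of $A_n$ mapping onto the deep parts of $F_n$, with derivative $\asymp e^{\pi q(n)}$) is where $\overline\phi$ is \emph{contracting} by the same factor; elsewhere $|\overline\phi'|\asymp 1$. So a coarse approximation of $z$ already determines $\overline\phi(z)$ to high precision everywhere on $\partial W_2$, and the modulus-of-continuity obstruction that drives your lower bound for $\overline f$ gives no lower bound at all for $\overline\phi$. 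An attempt to recover it by binary search (compute $\overline f(\zeta)$ by inverting $\overline\phi$ via an algorithm $B$) requires calling $B$ at precision $n+c\,q(n)$ because of that very contraction, so the ``overhead'' is $\exp$ in $q(n)$, not polynomial; plugging the known $T_{\overline f}(n)\ge q(n)$ into this reduction yields only an essentially linear bound on $T_{\overline\phi}$. Obtaining expansion for $\overline\phi$ requires a genuinely different boundary geometry — a portion of $\partial W_1$ of small length but comparatively large harmonic measure — which is why the theorem is stated with two distinct domains $W_1$ and $W_2$. You need to actually construct $W_1$; the one-domain shortcut is a gap, not an omitted routine step.

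Two smaller points: with $q(n)\ge 2$ the fjords at $p_n=e^{i2^{-n}}$ and $p_{n+1}$ can still overlap (one needs, e.g., $q(n)\ge 3$ or a sparser placement of the $p_n$); and $z^*_n=p_n-id_n/2$ is not on the boundary wall of a radially-inward fjord at $p_n$, so $\hat\phi(z^*_n)$ would land in the open disk rather than in $A_n\subset S^1$ — the point must be taken on $\partial W_2$. Both are easy to fix, but they should be fixed.
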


In other words, the computational complexity of the extended map can be arbitrarily high. 
\section{Computability in Complex Dynamics: Julia sets}\label{juliasets}

\nothing{
  
\subsection{Numerical simulation of a chaotic dynamical system: the modern paradigm}

A dynamical system can be simple, and thus easy to implement numerically. Yet its orbits may exhibit a very complex
behaviour. The famous paper of Lorenz \cite{Lorenz}, for example, described a rather simple nonlinear system of
ordinary differential equations $\bar x'(t)=F(\bar x)$ in three dimension which exhibits {\it chaotic} dynamics.
In particular, while the flow of the system $\Phi_t(\bar x_0)$ is easy to calculate with an arbitrary precision for 
any initial value $x_0$ and any time $t$. However, any error in estimating the initial value $\bar x_0$ grows exponentially
with $t$. This renders impractical attempting to numerically simulate the behaviour of a trajectory of the system for an extended 
period of time: small computational errors are magnified very rapidly. If we recall that the Lorenz system was introduced
as a simple model of weather forecasting, one understands why predicting weather conditions several days in advance
is difficult to do with any accuracy. 

On the other hand, there is a great regularity in the global structure of a typical trajectory of Lorenz system. As was
ultimately shown by Tucker \cite{Tucker}, there exists a set $\cA\subset \RR^3$ such that for almost every initial point $\bar x_0$, the 
limit set of the orbit,
$$\omega(\bar x_0)=\cA.$$
This set is the {\it attractor} of the system \cite{Smale,Milnor-attractor}. Moreover, for any continuous test function $\psi$, the 
time average of $\psi$ along a typical orbit 
$$\frac{1}{T}\int_{t=0}^T \psi(\Phi_{\bar x_0}(t) dt$$
converges to the integral $\int \psi d\mu$ with respect to a measure $\mu$ supported on $\cA$. 

Thus, both the spatial layout and the statistical properties of a large segment of a typical trajectory can be understood, and,
indeed, simulated on a computer: even mathematicians unfamiliar with dynamics have seen the butterfly-shaped picture of  Lorenz attractor $\cA$.
This example summarizes the modern paradigm of numerical study of chaos: while the simulation of an individual orbit for an extended
period of time does not make a practical sense, one should study the limit set of a typical orbit (both as a spatial object and as
a statistical distribution).
A modern summary of this paradigm is found, for example, in the article of J.~Palis \cite{Palis}.

}

\subsection{Basic properties of Julia sets}
An excellent general reference for the material in this section is 
the textbook of Milnor \cite{Mil}.

The modern paradigm of numerical study of chaos (see e.g. the article of J.~Palis \cite{Palis}) can be briefly summarized as follows.
While the simulation of an individual orbit for an extended
period of time does not make a practical sense, one should study the limit set of a typical orbit (both as a spatial object and as
a statistical distribution).
One of the best known illustration of this approach is the numerical study of Julia sets in Complex Dynamics. The Julia set $J(R)$ is the {\it repeller} of a rational mapping 
 $R$ of degree $\deg R=d\geq 2$ considered as a dynamical system on the Riemann
sphere
$$R:\riem\to\riem,$$
that is, it is the attractor for (all but at most two) inverse orbits under $R$
$$z_0,z_{-1},z_{-2},\ldots,\text{ where }R(z_{-n+1})=z_{-n}.$$
In fact, let $\delta_z$ denote the Dirac measure at $z\in\riem$.
Denote $$\mu_n(z)=\frac{1}{d^n}\sum_{w\in R^{-n}(z)}\delta_w.$$
These probability  measures assign equal weight to the $n$-th preimages of $z$, counted with multiplicity. 
Then for all points $z$ except at most two, the measures $\mu_n(z)$ weakly converge to the {\it Brolin-Lyubich measure} $\lambda$ of $R$, whose support is equal to $J(R)$ \cite{Brolin,Lyubich}.

Another way to define $J(R)$ is as the locus of chaotic dynamics of $R$, that is, 
the complement of the set where the dynamics is Lyapunov-stable:

\begin{defn}
Denote $F(R)$ the set of points $z\in\riem$ having an open neighborhood $U(z)$ on which the
family of iterates $R^n|_{U(z)}$ is equicontinuous. The set $F(R)$ is called the Fatou set of $R$
and its complement $J(R)=\riem\setminus F(R)$ is the Julia set.
\end{defn}

\noindent
Finally, when the rational mapping is a polynomial $$P(z)=a_0+a_1z+\cdots+a_dz^d:\CC\to\CC$$ an equivalent
way of defining the Julia set is as follows. Obviously, there exists a neighborhood of $\infty$ on $\riem$
on which the iterates of $P$ uniformly converge to $\infty$. Denoting $A(\infty)$ the maximal such domain of attraction
of $\infty$ we have $A(\infty)\subset F(R)$. We then have 
$$J(P)=\partial A(\infty).$$
The bounded set $\riem \setminus  A(\infty)$ is called {\it the filled Julia set}, and denoted $K(P)$;
it consists of points whose orbits under $P$ remain bounded:
$$K(P)=\{z\in\riem|\;\sup_n|P^n(z)|<\infty\}.$$
Using the above definitions, it is not hard to make a connection between computability questions for polynomial Julia sets, and the general framework of computable complex analysis discussed above. For instance, the Brolin-Lyubich measure $\lambda$ on the Julia set is the {\it harmonic measure} at $\infty$ of the basin $A(\infty)$; that is, the pull-back of the Lebesgue measure on the unit circle by the appropriately normalized Riemann mapping of the basin $A(\infty)$. We will discuss the computability of this measure below. We will also see an even more direct connection to computability of a Riemann mapping when we talk about computability of polynomial Julia sets.

\noindent
For future reference, let us summarize in a proposition below the main properties of Julia sets:

\begin{prop}
\label{properties-Julia}
Let $R:\riem\to\riem$ be a rational function. Then the following properties hold:
\begin{itemize}
\item $J(R)$ is a non-empty compact subset of $\riem$ which is completely
invariant: $R^{-1}(J(R))=J(R)$;
\item $J(R)=J(R^n)$ for all $n\in\NN$;
\item $J(R)$ has no isolated points;
\item if $J(R)$ has non-empty interior, then it is the whole of $\riem$;
\item let $U\subset\riem$ be any open set with $U\cap J(R)\neq \emptyset$. Then there exists $n\in\NN$ such that
$R^n(U)\supset J(R)$;
\item periodic orbits of $R$ are dense in $J(R)$.
\end{itemize}
\end{prop}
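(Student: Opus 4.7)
The plan is to follow the classical approach of Fatou and Julia, where essentially every property is extracted from a single tool: Montel's theorem on normal families of meromorphic functions on $\riem$, which says that a family of meromorphic maps into $\riem$ is normal on an open set as soon as it omits three values there. Recall that $F(R)$ is by definition the locus of normality of $\{R^n\}$, so Montel's theorem translates at once into statements about $J(R)$.

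I would first dispose of the easy structural claims. Compactness of $J(R)$ is immediate since $F(R)$ is open. Complete invariance $R^{-1}(J(R))=J(R)$ comes from the fact that $R$ is a surjective, open, finite-to-one holomorphic map, so equicontinuity of $\{R^n\}$ transfers along $R$ and along local branches of $R^{-1}$. Non-emptiness is a global obstruction: if $J(R)=\emptyset$ then $\{R^n\}$ would be normal on all of $\riem$, so some subsequence would converge uniformly to a meromorphic $g:\riem\to\riem$; but $\deg R^{n}=d^{n}\to\infty$ while $\deg g$ is finite, a contradiction. The identity $J(R)=J(R^n)$ follows because $\{R^{nk}\}_{k}$ is a subfamily of $\{R^k\}_{k}$, giving $F(R)\subset F(R^n)$, while the reverse inclusion uses that each of $R,R^2,\dots,R^{n-1}$ is uniformly continuous on $\riem$ so equicontinuity of $\{R^{nk}\}$ pulls back to equicontinuity of the full sequence.

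The heart of the argument is the expansion statement (fifth bullet), which I would prove next, since the remaining items then fall out. Let $U$ be open with $U\cap J(R)\neq\emptyset$. Then $\{R^n|_U\}$ is non-normal, so by Montel the complement $E:=\riem\setminus\bigcup_n R^n(U)$ contains at most two points. A short separate argument shows $E\cap J(R)=\emptyset$: the set $E$ is backward invariant under $R$, and any finite completely invariant set of $R$ consists of exceptional points, which by Riemann--Hurwitz must lie in $F(R)$. Hence $\bigcup_n R^n(U)\supset J(R)$, and by compactness of $J(R)$ together with the nested structure $R^{n}(U)\subset R^{n+1}(U)$ (ensured after shrinking $U$ slightly and using openness of $R$), a single iterate $R^n(U)$ already contains $J(R)$.

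The remaining items now follow quickly. No isolated points: if $z_0\in J(R)$ were isolated, pick $U$ with $U\cap J(R)=\{z_0\}$; then $R^n(U)\supset J(R)$ would force $J(R)$ to be finite and completely invariant, hence exceptional, contradicting non-emptiness of $J(R)$ and its disjointness from the exceptional set. If $J(R)$ has non-empty interior $V$, then applying the expansion property to $V$ gives $R^n(V)\supset J(R)$; combining with complete invariance and the fact that $V$ itself is mapped inside $J(R)$ by all iterates, one propagates $V$ to cover $\riem$. Finally, for density of periodic orbits, in any $U$ with $U\cap J(R)\neq\emptyset$ one uses $R^n(U)\supset U$ for some large $n$ together with a Brouwer/normal-families fixed-point argument to produce a periodic point inside $U$. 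The main obstacle is precisely this last step: one must rule out the degenerate case where every local inverse branch of $R^n$ on $U$ is an isometry of some conformal structure, and this is handled by varying $n$ and $U$ and using non-normality of $\{R^n|_U\}$ to force a contraction.
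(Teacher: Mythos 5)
The paper does not actually prove this proposition: it is presented as a summary of well-known background facts, with Milnor's textbook \cite{Mil} cited as a general reference, so there is no in-paper proof to compare against. Evaluating your argument on its own terms, the overall strategy --- extracting everything from Montel's theorem, with the expansion statement (fifth bullet) as the workhorse from which perfectness, the interior dichotomy and density of periodic orbits are deduced --- is the classical Fatou--Julia route, and most of the individual steps (compactness, complete invariance, non-emptiness via degree growth, $J(R)=J(R^n)$) are sound. However, there is a genuine gap precisely at the expansion step.

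You correctly obtain from Montel that $J(R)\subset\bigcup_n R^n(U)$, and then claim a single iterate suffices ``by compactness of $J(R)$ together with the nested structure $R^n(U)\subset R^{n+1}(U)$ (ensured after shrinking $U$ slightly and using openness of $R$).'' This nesting, equivalently $U\subset R(U)$, is a nontrivial dynamical assertion that neither shrinking $U$ nor the openness of $R$ produces; an arbitrary small disk meeting $J(R)$ need not be contained in its own image. The two standard ways to repair this are: (a) first prove density of periodic points directly from Montel (e.g.\ via the cross-ratio argument comparing $R^n(z)$, $z$, and two local inverse branches, which cannot omit $0,1,\infty$ on a set meeting $J(R)$), invoke the Fatou--Shishikura finiteness of non-repelling cycles to upgrade to repelling ones, and then shrink $U$ around a repelling periodic point to obtain $U\subset R^m(U)$; or (b) avoid periodic points entirely by placing three disjoint closed disks $\overline{D_1},\overline{D_2},\overline{D_3}\subset U$, each meeting $J(R)$, using the quantitative form of Montel's theorem to show that some $R^n(D_i)\supset\overline{D_j}$, and then a pigeonhole over $i\mapsto j$ to extract $R^m(D_i)\supset\overline{D_i}$. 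Note that your current ordering is in fact circular: you derive density of periodic orbits \emph{from} the expansion property, while the only apparent way to justify the nesting you invoke is via density of repelling periodic points. A related slip appears in the last bullet, where you write $R^n(U)\supset U$; the expansion statement only gives $R^n(U)\supset J(R)$, and again one needs the self-nesting $R^m(V)\supset\overline V$ from (a) or (b) to run the Brouwer/Schwarz fixed-point step. With either repair in place, the remaining deductions you sketch go through.
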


\noindent
For a periodic point $z_0=R^p(z_0)$
of period $p$ its {\it multiplier} is the quantity $\lambda=\lambda(z_0)=DR^p(z_0)$.
We may speak of the multiplier of a periodic cycle, as it is the same for all points
in the cycle by the Chain Rule. In the case when $|\lambda|\neq 1$, the dynamics
in a sufficiently small neighborhood of the cycle is governed by the Mean
Value Theorem: when $|\lambda|<1$, the cycle is {\it attracting} ({\it super-attracting}
if $\lambda=0$), if $|\lambda|>1$ it is {\it repelling}.
Both in the attracting and repelling cases, the dynamics can be locally linearized:
\begin{equation}
\label{linearization-equation}
\psi(R^p(z))=\lambda\cdot\psi(z)
\end{equation}
where $\psi$ is a conformal mapping of a small neighborhood of $z_0$ to a disk around $0$.

\noindent
In the case when $|\lambda|=1$, so that $\lambda=e^{2\pi i\theta}$, $\theta\in\RR$, 
 the simplest to study is the {\it parabolic case} when $\theta=n/m\in\QQ$, so $\lambda$ 
is a root of unity. In this case $R^p$ is not locally linearizable and  $z_0\in J(R)$.

 In the complementary situation, two non-vacuous possibilities  are considered:
{\it Cremer case}, when $R^p$ is not linearizable, and {\it Siegel case}, when it is.
In the latter case, the linearizing map $\psi$ from (\ref{linearization-equation}) conjugates
the dynamics of $R^p$ on a neighborhood $U(z_0)$ to the irrational rotation by angle $\theta$
(the {\it rotation angle})
on a disk around the origin. The maximal such neighborhood of $z_0$ is called a {\it Siegel disk}.

Fatou showed that for a rational mapping $R$ with $\deg R=d\geq 2$
at most finitely
many periodic orbits are non-repelling. A sharp bound on their number  depending on $d$ has
been established by Shishikura; it is equal to the number of critical points of $R$ counted with
multiplicity: 

\begin{fshb}
For a rational mapping of degree $d$ the number of the non-repelling periodic
cycles taken together with the number of cycles of Herman rings is at most $2d-2$.
For a polynomial of degree $d$ the number of non-repelling periodic cycles in $\CC$
is at most $d-1$.
\end{fshb}

\noindent
 Therefore, the last statement of \propref{properties-Julia} can be restated as:

\begin{itemize}
\item {\it repelling periodic orbits are dense in $J(R)$}.
\end{itemize}

To conclude the discussion of the basic properties of Julia sets, let us consider the simplest
examples of non-linear rational endomorphisms of the Riemann sphere, the quadratic polynomials.
Every affine conjugacy class of quadratic polynomials has a unique representative of the
form $f_c(z)=z^2+c$, the family
$$f_c(z)=z^2+c,\;c\in\CC$$
is often referred to as {\it the quadratic family}.
For a quadratic map the structure of the Julia set is governed by the behavior of the orbit of the only
finite critical point $0$. In particular, the following dichotomy holds:

\begin{prop}
\label{quadratic-Julia}
Let $K=K(f_c)$ denote the filled Julia set of $f_c$, and $J=J(f_c)=\partial K$. Then:
\begin{itemize}
\item $0\in K$ implies that $K$ is a connected, compact subset of the plane with connected complement;
\item $0\notin K$ implies that $K=J$ is a planar Cantor set.
\end{itemize}
\end{prop}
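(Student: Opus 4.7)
The plan is to use the Böttcher coordinate $\phi$ at infinity, which initially is defined on some neighborhood of $\infty$ and conjugates $f_c$ to $w\mapsto w^2$, together with the associated Green's function $G(z)=\log|\phi(z)|$. By the functional equation $G(f_c(z))=2G(z)$, this $G$ extends to a continuous nonnegative function on $\riem$ that is positive and subharmonic on $A(\infty)$, vanishes exactly on $K$, and is harmonic on $A(\infty)$ away from the forward orbit of the only finite critical point $0$. Both cases of the dichotomy will follow from a careful analysis of how far $\phi$ extends.

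\textbf{Case $0\in K$.} I would extend $\phi$ throughout $A(\infty)$ by iteratively pulling back the functional equation $\phi(z)=\sqrt{\phi(f_c(z))}$. Because the only finite critical value is $c=f_c(0)\in K$, none of these pullbacks ever encounters a branch point in $A(\infty)$, so monodromy is trivial and $\phi$ extends to a conformal isomorphism
\[
\phi:A(\infty)\to\{|w|>1\}.
\]
Hence $A(\infty)$ is simply connected in $\riem$, so $K=\riem\setminus A(\infty)$ is a connected compact subset of $\CC$, and $\CC\setminus K=A(\infty)\setminus\{\infty\}$ is homeomorphic to $\{1<|w|<\infty\}$ and therefore connected.

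\textbf{Case $0\notin K$.} Now $0\in A(\infty)$ and $t:=G(0)>0$. From $G(z)=\tfrac12 G(z^2+c)$ near $0$ one checks that $G$ has a non-degenerate saddle at $0$. I would then identify the critical level set $\Gamma=\{G=t\}$: away from $0$ it is a smooth 1-manifold, and $f_c$ maps $\Gamma$ as a degree-2 branched cover, branched only at $0$, onto the smooth Jordan curve $\{G=2t\}$ that bounds a topological disk $D\supset K$. This forces $\Gamma$ to be a figure-eight consisting of two Jordan loops $\partial L_0,\partial L_1$ meeting only at $0$, and $f_c$ restricts to a homeomorphism $L_i\to D$ for $i=0,1$. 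Thus the two inverse branches
\[
\psi_i=(f_c|_{L_i})^{-1}:D\to L_i\subset D
\]
have disjoint images (they meet only at $0\notin K$) and form an iterated function system. Since $K\subset\{G\le t\}=L_0\cup L_1$ and $f_c^{-1}(K)=K$, one has $K=\psi_0(K)\sqcup\psi_1(K)$, and iterating gives
\[
K=\bigcap_{n\ge 0}\bigcup_{\omega\in\{0,1\}^n}\psi_{\omega_0}\circ\cdots\circ\psi_{\omega_{n-1}}(D).
\]
Using that $\psi_0,\psi_1$ are uniform contractions in the hyperbolic metric of $\{G<2t\}\setminus K$, the nested closed pieces indexed by each $\omega\in\{0,1\}^{\NN}$ shrink to a single point, producing a homeomorphism $\{0,1\}^{\NN}\to K$. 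In particular $K$ is a nonempty, perfect, compact, totally disconnected subset of $\CC$, hence a Cantor set; and since it has empty interior, $K=\partial K=J$.

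\textbf{Expected main obstacle.} The delicate point is the rigorous global identification of the critical level set $\Gamma$ as a figure-eight with precisely two loops. The saddle analysis at $0$ is routine, but ruling out additional components or higher-genus topology requires controlling how the components of $\{G<t\}$ sit relative to each other. The efficient route I would pursue is to exploit the fact that $f_c\colon\Gamma\to\{G=2t\}$ is a degree-2 branched covering of compact 1-complexes, branched only at $0$: a Riemann-Hurwitz count on this graph map then forces exactly two edges at the single vertex $0$ and no other components, giving the figure-eight structure needed for the IFS argument.
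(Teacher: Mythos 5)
Your proof is correct, and it is essentially the standard B\"ottcher/Green's-function argument found in Milnor's textbook, which is the reference the paper cites for this proposition (the paper itself gives no proof). Let me make three remarks that tighten the sketch, two of which bear on the ``main obstacle'' you identify. In the connected case, the claim that ``monodromy is trivial'' is precisely where the work lies; the standard justification is a Riemann--Hurwitz induction on equipotentials: $f_c\colon\{G>r/2\}\to\{G>r\}$ is proper of degree $2$ with $\infty$ as its only ramification point (because $0\in K$), so $\chi(\{G>r/2\})=2\cdot 1-1=1$, each $\{G>r\}$ is a topological disk, $A(\infty)=\bigcup_{r>0}\{G>r\}$ is simply connected, and the iterated square roots glue unambiguously. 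In the Cantor case you can avoid the figure-eight identification entirely: since $G(0)=t$ and $G(c)=2t$, the critical point lies on $\partial\{G<t\}$ and the critical value on $\partial D$ (with $D=\{G<2t\}$ a Jordan domain, as $\{G=2t\}$ carries no critical points of $G$), so $f_c\colon\{G<t\}\to D$ is a proper \emph{unbranched} degree-$2$ covering of a simply connected base; it is therefore the trivial covering $U_0\sqcup U_1\to D$ with each $f_c|_{U_i}$ a biholomorphism onto $D$, which is exactly the iterated-function-system structure you need, with no graph Riemann--Hurwitz required. Finally, the contraction should be measured in the hyperbolic metric $\rho_D$ of the disk $D$ itself rather than of $D\setminus K$: the latter metric blows up along $K$, which lies on its ideal boundary, so the branches are not uniform contractions there. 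Since each $\ov{U_i}\Subset D$, the maps $\psi_i\colon D\to U_i\subset D$ contract $\rho_D$ by a definite factor, the nested pieces $\psi_{\omega_1}\circ\cdots\circ\psi_{\omega_n}(\ov{U_i})$ shrink to points, and one obtains the claimed homeomorphism $\{0,1\}^{\NN}\to K$.
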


\noindent
The {\it Mandelbrot set} $\cM\subset \CC$ is defined as the set of parameter values $c$ for which 
$J(f_c)$ is connected.

\noindent
A rational mapping $R:\hat\CC\to\hat\CC$ is called {\it hyperbolic} if the orbit of every critical point of $R$
is either periodic, or  converges to an
attracting cycle.
As easily follows from Implicit Function Theorem and considerations of local dynamics of an attracting orbit,
hyperbolicity is an open property in the parameter space of rational mappings of degree $d\geq 2$.

\noindent
Considered as a rational mapping of the Riemann sphere, a quadratic polynomial $f_c(z)$ has two critical points:
the origin, and the super-attracting fixed point at $\infty$. In the case when $c\notin \cM$, the orbit of the
former converges to the latter, and thus $f_c$ is hyperbolic. 
A classical result of Fatou implies that whenever $f_{c}$ has an attracting orbit in $\CC$, this orbit attracts the orbit of the critical point. Hence,
$f_c$  is a hyperbolic
mapping and $c\in \cM$. The following conjecture is central to the field of dynamics in one complex variable:

\medskip
\noindent
{\bf Conjecture (Density of Hyperbolicity in the Quadratic Family).} {\it Hyperbolic parameters are dense  
in $\cM$.}

\medskip
\noindent
Fatou-Shishikura Bound implies that a quadratic polynomial has at most one non-repelling cycle in the complex
plane. Therefore, 
we will call the polynomial $f_c$ (the parameter $c$, the Julia set $J_c$) {\it Siegel, Cremer,} or 
{\it parabolic} when it has an orbit of the corresponding type.

\subsection{Occurence of Siegel disks and Cremer points in the quadratic family}
\label{sec:quad siegel}
Before stating computability/complexity results for Julia sets we need to
discuss in more detail the occurrence of Siegel disks in the quadratic family.
For a number $\theta\in [0,1)$ denote $[r_1,r_2,\ldots,r_n,\ldots]$, $r_i\in\NN\cup\{\infty\}$ its possibly finite 
continued fraction expansion:
\begin{equation}
\label{cfrac}
[r_1,r_2,\ldots,r_n,\ldots]\equiv\cfrac{1}{r_1+\cfrac{1}{r_2+\cfrac{1}{\cdots+\cfrac{1}{r_n+\cdots}}}}
\end{equation}
Such an expansion is defined uniquely if and only if $\theta\notin\QQ$. In this case, the {\it rational 
convergents } $p_n/q_n=[r_1,\ldots,r_{n}]$ are the closest rational approximants of $\theta$ among the
numbers with denominators not exceeding $q_n$.

\nothing{
\begin{defn}
The {\it diophantine numbers
of order k}, denoted $\cD(k)$
is the following class of irrationals ``badly'' approximated by rationals.
 By definition, $\theta\in\cD(k)$ if there exists $c>0$ such that
$$q_{n+1}<cq_n^{k-1}$$
\end{defn}

\noindent
The numbers $q_n$ can
be calculated from the recurrent relation
$$q_{n+1}=r_{n+1}q_n+q_{n-1},\text{ with }q_0=0,\; q_1=1.$$ Therefore, $\theta\in\cD(2)$ if and only if the sequence $\{r_i\}$
is bounded. Dynamicists call such numbers {\it bounded type} (number-theorists prefer {\it constant type}). 
An extreme example of a number of bounded type is the golden mean
$$\theta_*=\frac{\sqrt{5}-1}{2}=[1,1,1,\ldots].$$
The set $$\displaystyle\cD(2+)\equiv\bigcap_{k>2}\cD_k$$
has full measure in the interval $[0,1)$. In 1942 Siegel showed:

\begin{thm}[\cite{siegel}]
Let $R$ be an analytic map with a periodic point $z_0\in\riem$ of period $p$. Suppose the multiplier of the cycle
$$\lambda=e^{2\pi i\theta}\text{ with }\theta\in\cD(2+),$$
then the local linearization equation (\ref{linearization-equation}) holds.
\end{thm}

}

\noindent
Inductively define $\theta_1=\theta$
and $\theta_{n+1}=\{1/\theta_n\}$. In this way, 
$$\theta_n=[r_{n},r_{n+1},r_{n+2},\ldots].$$
We define the {\it Yoccoz's Brjuno function} as
$$\Phi(\theta)=\displaystyle\sum_{n=1}^{\infty}\theta_1\theta_2\cdots\theta_{n-1}\log\frac{1}{\theta_n}.$$
In 1972, Brjuno proved the following:
\begin{thm}[\cite{Bru}]
Let $R$ be an analytic map with a periodic point $z_0\in\riem$ of period $p$. Suppose the multiplier of the cycle
$$\lambda=e^{2\pi i\theta}\text{ with }\Phi(\theta)<\infty,$$
then the local linearization equation (\ref{linearization-equation}) holds.
\end{thm}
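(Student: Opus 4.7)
The plan is to follow the classical majorant method of Brjuno. First I would localize the problem. By passing to the iterate $g = R^p$ and choosing a holomorphic chart sending $z_0$ to the origin, it suffices to prove that a germ $g(w) = \lambda w + \sum_{n\geq 2} a_n w^n$, holomorphic on some disk $\{|w|\leq r\}$ with $|g(w)-\lambda w|\leq M|w|^2$, is analytically conjugate near $0$ to its linear part $w\mapsto \lambda w$. I seek $\psi(w) = w + \sum_{n\geq 2} c_n w^n$ satisfying $\psi\circ g = \lambda\psi$. Comparing Taylor coefficients of order $n$ on both sides yields a triangular recursion
\[
(\lambda^n-\lambda)\,c_n \;=\; P_n(a_2,\ldots,a_n;\,c_2,\ldots,c_{n-1}),
\]
where each $P_n$ is a universal polynomial with non-negative integer coefficients. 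This determines the formal series $\psi$ uniquely, and the only obstruction to analytic convergence comes from the small divisors $|\lambda^n-\lambda|^{-1}=|\lambda^{n-1}-1|^{-1}$.

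Second, I would introduce the classical majorant. Setting $\omega_k = |e^{2\pi i (k-1)\theta}-1|^{-1}$ and $\varepsilon_k = \max(\log\omega_k,\,0)$, an induction on $n$ using the recursion above, where each appearance of $P_n$ is dominated by the corresponding term in the Taylor expansion of $(1-w/r)^{-1}$ (possible because $P_n$ has non-negative coefficients), gives a bound of the form
\[
|c_n| \;\leq\; C^{\,n}\,\exp\!\bigl(\Sigma_n\bigr),
\]
where $\Sigma_n$ is a sum of $\varepsilon_{k}$'s distributed along binary trees with $n$ leaves encoded by the recursion. The goal is to show $\Sigma_n = O(n)$, which would yield $\limsup_{n\to\infty}|c_n|^{1/n}<\infty$ and hence a positive radius of convergence for $\psi$.

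Third, and this is the arithmetic heart of the proof, I would invoke Brjuno's estimate linking the combinatorial sum $\Sigma_n$ to the Yoccoz--Brjuno series $\Phi(\theta)$. Letting $p_k/q_k$ denote the convergents of $\theta$, the small divisor $\omega_m$ is of order $q_{k+1}$ only when $q_k\leq m<q_{k+1}$, and between consecutive denominators $\omega_m$ is bounded by $C\,m$. Grouping the contributions by blocks $[q_k,q_{k+1})$ and using the classical identity
\[
\Phi(\theta) \;=\; \sum_{n=1}^\infty \theta_1\theta_2\cdots\theta_{n-1}\log\tfrac{1}{\theta_n} \;\asymp\; \sum_{k=0}^\infty \frac{\log q_{k+1}}{q_k},
\]
one obtains
\[
\frac{1}{n}\sum_{k=2}^{n}\varepsilon_k \;\leq\; C\,\Phi(\theta) + o(1).
\]
A careful bookkeeping of the tree sum (charging each small divisor $\omega_{q_k}$ to roughly $q_k$ distinct indices so that each block contributes its share exactly once) promotes this Ces\`aro bound to the required estimate $\Sigma_n \leq D\,n\,(\Phi(\theta)+1)$. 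Pulling $\psi$ back through the chart and restricting to a small enough neighborhood of $z_0$ then produces the linearization \eqref{linearization-equation}.

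The main obstacle is the step just described: one must arrange the inductive bound so that the small divisors are not overcounted along the binary trees coming from the recursion. The elegant device, due to Brjuno, is precisely to sum by continued-fraction blocks, which converts the accumulated divisor loss into the partial sums of $\Phi(\theta)$ rather than into a cruder Diophantine quantity; this is what makes the condition $\Phi(\theta)<\infty$ (and not a stronger Diophantine hypothesis) sufficient for linearizability.
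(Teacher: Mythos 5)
The paper cites Brjuno's theorem from \cite{Bru} without proof, as one would expect in a survey, so there is no in-paper argument to compare against; I can only assess your sketch on its own merits. What you outline is Brjuno's original power-series (majorant) method, which is indeed the classical proof: localize to a germ $g(w)=\lambda w+O(w^2)$, derive the triangular coefficient recursion with small divisors $|\lambda^{n-1}-1|$, run an inductive majorant bound of the form $|c_n|\le C^n\exp(\Sigma_n)$, and reduce convergence to showing that the accumulated small-divisor loss $\Sigma_n$ grows at most linearly. Your description of the combinatorial core (an internal node of a decomposition tree of size $m\in[q_k,q_{k+1})$ covers at least $q_k$ leaves, so at most $n/q_k$ such nodes appear, giving a per-block contribution $\lesssim n\log q_{k+1}/q_k$ and a total $\lesssim n\sum_k\log q_{k+1}/q_k \asymp n\,\Phi(\theta)$) is the right way to think about Brjuno's arithmetic lemma; this is what makes the Brjuno condition, rather than a Diophantine condition, the sharp sufficient hypothesis.

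Two caveats. First, the Ces\`aro bound $\frac{1}{n}\sum_{k\le n}\varepsilon_k \le C\Phi(\theta)+o(1)$ is not really an intermediate that gets ``promoted'' to the tree estimate; the tree bound $\Sigma_n\le Dn(\Phi(\theta)+1)$ must be proved directly by the block-charging argument (Brjuno's lemma, or Davie's 1994 streamlining), and the Ces\`aro average plays no role in that inequality. Framing it as ``Ces\`aro $\Rightarrow$ tree bound'' obscures the actual logic. Second, and just for context since the paper cites both works: Yoccoz's 1995 paper referenced here for the converse also contains a quite different, geometric proof of Brjuno's theorem via renormalization and area estimates on the linearization domain, which sidesteps the power-series recursion entirely. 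Your sketch follows the original Brjuno route; either is legitimate, and the paper itself is agnostic on method.
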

This theorem generalized the classical result by Siegel \cite{siegel} which established the same statement for all Diophanitive values of $\theta$.

\begin{figure}[h]
\centerline{\includegraphics[width=0.5\textwidth]{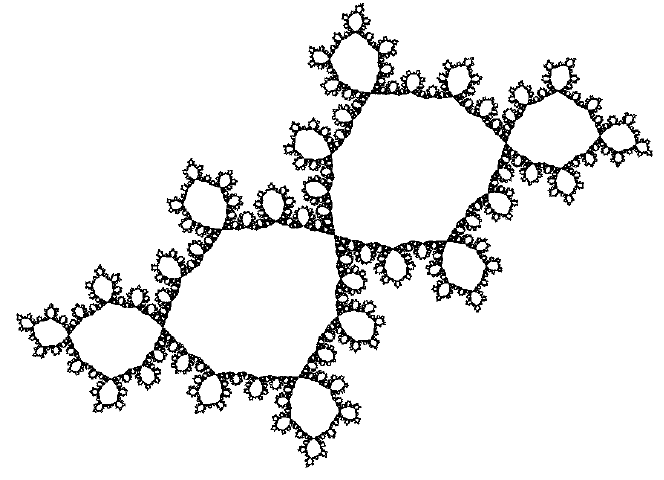}}
\caption{The  Julia set of $P_\theta$ for $\theta=[1,1,1,1,\ldots]$ (the inverse golden mean). This set is actually computable (see \cite{BY-MMJ}).}
\label{fig-siegel}
\end{figure}

\noindent
Note that a quadratic polynomial with a fixed Sigel disk with rotation angle $\theta$ after an affine
change of coordinates can be written as 
\begin{equation}
\label{form-1}
P_\theta(z)=z^2+e^{2\pi i \theta}z.
\end{equation}
\noindent
In 1987 Yoccoz \cite{Yoc} proved the following converse to Brjuno's Theorem:

\begin{thm}[\cite{Yoc}]
Suppose that for $\theta\in[0,1)$ the polynomial $P_\theta$ has a Siegel point at the origin.
Then $\Phi(\theta)<\infty$.
\end{thm}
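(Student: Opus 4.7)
The plan is to deduce the theorem from Yoccoz's sharper two-sided estimate
$$|\log r(\theta) + \Phi(\theta)| \le C_0,$$
where $r(\theta)$ denotes the conformal radius at $0$ of the maximal linearization domain of $P_\theta$, with the convention $r(\theta) = 0$ when no Siegel disk exists. Once this is in hand, the theorem is immediate: if $P_\theta$ has a Siegel point at $0$ then $r(\theta) > 0$, so $\log r(\theta) > -\infty$, and therefore
$$\Phi(\theta) \le -\log r(\theta) + C_0 < \infty.$$

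I would prove the estimate by running Yoccoz's renormalization. Introduce an operator $\cR$ on a suitable class of univalent germs with an indifferent fixed point whose effect on the rotation number is the Gauss map $\theta \mapsto \{1/\theta\}$. Writing $r_n := r(\theta_n)$ for the conformal radii along the orbit of $\theta$ under this map, the central one-step inequality is the \emph{upper} bound
$$\log r_n \;\le\; \theta_n \log r_{n+1} \;+\; \log\theta_n \;+\; C.$$
Iterating $n$ times and using the identity
$$\sum_{k=1}^n \theta_1\theta_2\cdots\theta_{k-1}\log\theta_k \;=\; -\Phi_n(\theta),$$
together with the uniform summability of $\sum_k \theta_1\cdots\theta_k$ (these products decay at least geometrically), yields
$$\log r_1 \;\le\; \theta_1\cdots\theta_n \log r_{n+1} \;-\; \Phi_n(\theta) \;+\; C'.$$
Because the Siegel disk of each normalized quadratic is contained in its filled Julia set, and hence in a fixed disk of $\CC$, we have a uniform upper bound $\log r_{n+1} \le \log R$, so the first term on the right tends to $0$. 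Passing to the limit gives $\Phi(\theta) \le -\log r(\theta) + C''$, which is exactly the Cremer direction of Yoccoz's estimate.

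The main obstacle is establishing the one-step upper bound; this is Yoccoz's sector renormalization. One chooses a fundamental crescent $\cF$ for the dynamics near $0$, bounded by a transversal arc $\gamma$ and its image $f^{q}(\gamma)$ for an appropriate partial-convergent iterate $q\approx 1/\theta_1$; identifying the sides of $\cF$ via $f^{q}$ produces a topological cylinder, whose uniformization followed by exponentiation gives the renormalized germ $\cR f$. If $f$ has a Siegel disk $\Delta$ of conformal radius $r_1$, then its projection to the quotient cylinder is a round disk inside the linearization domain of $\cR f$. Applying the Schwarz lemma to the inverse of the uniformization, together with the $\theta_1^{-1}$-fold covering intrinsic to the construction (the renormalized coordinate wraps around roughly $1/\theta_1$ times under $f^{q}$), produces the exponent $\theta_1$ and the additive $\log\theta_1$. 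The delicate point is controlling the conformal distortion of the uniformization uniformly in $\theta$; this is carried out via an \emph{a priori} Koebe-type estimate on the shape of $\cF$, and is the technical core of Yoccoz's original argument.
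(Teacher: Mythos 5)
The survey you are working from does not actually prove this theorem --- it is quoted from \cite{Yoc}, and the only quantitative estimate the paper states in this direction is Yoccoz's \emph{lower} bound (that $\Phi(\theta)+\log r(\theta)$ is bounded from below). Your reduction of the theorem to the \emph{upper} bound $\Phi(\theta)\le -\log r(\theta)+C_0$ is correct and is indeed the standard architecture, but that upper bound is precisely the hard content of Yoccoz's theorem, so everything hinges on your one-step inequality $\log r_n\le\theta_n\log r_{n+1}+\log\theta_n+C$, and this is where the proposal has a genuine gap.

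That inequality, with a constant $C$ uniform over the renormalization class, cannot be obtained by projecting the Siegel disk to the quotient cylinder and applying the Schwarz lemma in a class of \emph{univalent germs}, because in such a class the inequality is simply false: test it on the rigid rotation $z\mapsto e^{2\pi i\theta}z$ (or any germ sufficiently close to a rotation), where $r_n$ and $r_{n+1}$ are both comparable to the normalization radius of the class; the inequality then forces $\log(1/\theta_n)\le C$, which fails whenever the continued-fraction entry is large. An upper bound of this shape is special to maps whose Siegel disk is obstructed by a critical point, and after one renormalization step your map is no longer the quadratic $P_\theta$, so the argument must carry quantitative control of the critical value of $P_\theta$ through the entire renormalization tower; the per-step loss of domain in the crescent construction is otherwise not summable. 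This propagation of the critical point --- not the distortion of the uniformization of the crescent $\cF$, which is the technical core of the \emph{lower}-bound (Brjuno--Siegel) direction --- is the actual heart of Yoccoz's proof of the converse, and it is also why the later treatments (e.g.\ Buff--Ch\'eritat's proof of the upper bound, combining parabolic explosion with renormalization) are delicate. Relatedly, your uniform bound $\log r_{n+1}\le\log R$ ``because the Siegel disk of each normalized quadratic lies in its filled Julia set'' does not apply: the renormalized maps are not quadratics, and the bound must instead come from the normalization of whatever class the renormalization preserves --- again pointing to the fact that the class and the role of the critical point are the missing ingredients rather than routine technicalities.
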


\noindent
In fact, the value of the function $\Phi$ is directly related to the size of the Siegel disk in the following way.

\begin{defn}
Let $P(\theta)$ be a quadratic polynomial with a Siegel disk $\Delta_\theta\ni 0$. Consider a conformal isomorphism
$\phi:\DD\mapsto\Delta$ fixing $0$. The {\it conformal radius of the Siegel disk $\Delta_\theta$} is
the quantity
$$r(\theta)=|\phi'(0)|.$$
For all other $\theta\in[0,\infty)$ we set $r(\theta)=0$. 
\end{defn} 

\noindent
By the Koebe One-Quarter Theorem of classical complex analysis, the internal radius of $\Delta_\theta$ is at least
$r(\theta)/4$. Yoccoz \cite{Yoc} has shown that the sum 
$$\Phi(\theta)+\log r(\theta)$$
is bounded from below independently of $\theta\in\cB$. Buff and Ch{\'e}ritat have greatly improved this result
by showing that:

\begin{thm}[\cite{BC}]
\label{phi-cont}
The function $\upsilon:\theta\mapsto \Phi(\theta)+\log r(\theta)$ extends to $\RR$ as a 1-periodic continuous
function.
\end{thm}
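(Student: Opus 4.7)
The 1-periodicity is immediate: since $P_{\theta+1}=P_\theta$, we have $r(\theta+1)=r(\theta)$; and the continued fraction of $\{\theta+1\}$ coincides with that of $\theta\in[0,1)$, so $\Phi(\theta+1)=\Phi(\theta)$ by the defining series. The substance of the theorem is the continuity statement, which I would attack in two stages.

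First, I would establish continuity on the Brjuno set where $\upsilon$ is a priori defined. Yoccoz's theorem already supplies the upper bound $\upsilon\le C$. For the matching lower bound, originally due to Risler for bounded type and then extended, one performs a near-parabolic quasiconformal construction: starting from the parabolic polynomial $P_{p/q}$, one perturbs through a controlled change of Fatou coordinates to produce quadratic polynomials with rotation numbers near $p/q$ whose Siegel disks have conformal radius bounded below by an explicit function of $\Phi$. Upper semi-continuity of $r$ is a Hurwitz-type normality argument applied to the normalized linearizers $\phi_\theta:\DD\to\Delta_\theta$ together with Koebe distortion; the lower semi-continuity, which is the delicate point, requires the precise control provided by the Inou--Shishikura near-parabolic renormalization machinery to ensure that Siegel disks cannot collapse abruptly under perturbation of a Brjuno parameter.

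Second, I would extend $\upsilon$ to all of $\RR$ by defining $\upsilon(\theta_\infty)$ at a non-Brjuno $\theta_\infty$ as the limit of $\upsilon(\theta_n)$ along any Brjuno sequence $\theta_n\to\theta_\infty$. The existence and sequence-independence of this limit is the main obstacle. The governing mechanism is parabolic implosion: when $\theta_n\to p/q$, the iterate $P_{\theta_n}^q$ is a small perturbation of the parabolic $P_{p/q}^q$, and the size of the induced Siegel disk is dictated in a universal way by the Fatou coordinates of $P_{p/q}^q$. Matching Fatou coordinates of two perturbations $\theta_n,\theta_n'$ forces the difference $\upsilon(\theta_n)-\upsilon(\theta_n')$ to tend to zero; iterating this comparison inductively along the continued fraction expansion of $\theta_\infty$ yields convergence. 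This is where all the heavy machinery of \cite{BC} is concentrated, and it is the step I expect to be the genuine obstacle, because each level of the induction requires uniform quantitative estimates on the deviation between the perturbed and parabolic Fatou coordinates.

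Continuity of the resulting extension at Brjuno points reduces to the first stage, and continuity at non-Brjuno points follows tautologically from the definition and the uniformity of the matching in the second stage; combined with the 1-periodicity, this produces the required continuous function on $\RR$.
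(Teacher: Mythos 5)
The survey does not prove this theorem; it is stated as a quoted result of Buff and Chéritat with a citation to [BC], so there is no in-paper proof for your proposal to be checked against. Taken on its own terms, your sketch gestures at the right circle of ideas --- parabolic implosion controlled inductively along the continued fraction expansion of $\theta$ is indeed the engine of the Buff--Chéritat argument, and you are right that the inductive matching of perturbed and parabolic Fatou coordinates is where all the difficulty lives. Since you explicitly leave that step undone, what you have is a plan rather than a proof; I will only flag the factual problems in the plan.

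Two attributions should be corrected. You write that Yoccoz supplies the upper bound $\upsilon\le C$ and that the matching lower bound is the delicate near-parabolic construction; this is reversed. As the survey records directly above the statement, Yoccoz's quantitative estimate is the \emph{lower} bound, $\Phi(\theta)+\log r(\theta)\ge -C$ uniformly over Brjuno $\theta$, and it is the complementary upper bound together with the actual continuity that [BC] establishes. Second, invoking Inou--Shishikura near-parabolic renormalization is anachronistic here: the Buff--Ch\'eritat continuity paper predates the availability of that machinery and instead relies on the Douady--Lavaurs--Shishikura theory of parabolic implosion combined with their own ``parabolic explosion'' estimates for Siegel disk sizes near roots of unity; Inou--Shishikura renormalization is the tool in their later positive-area work, not this one. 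Finally, a structural caution on your two-stage split: pointwise continuity of $\upsilon$ on the dense, totally disconnected Brjuno set (your first stage) does not by itself yield a continuous extension to $\RR$; what is actually needed, and what the implosion estimates deliver, is a uniform Cauchy-type modulus on the Brjuno set, which then produces the extension and its continuity at once --- so continuity at Brjuno points should be a by-product of the quantitative estimate in your second stage, not an independent first stage.
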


\noindent
The following stronger conjecture exists (see \cite{MMY}):

\medskip
\noindent
{\bf Marmi-Moussa-Yoccoz Conjecture.} \cite{MMY} {\it The function $\upsilon: \theta\mapsto \Phi(\theta)+\log r(\theta)$ is H{\"o}lder of exponent $1/2$.}

\medskip
\noindent
It is important to note that computability of the function $\upsilon:\RR\to\RR$ would follow from above conjecture. In fact, the following is true
(see \cite{BY-book}):

\begin{tet}
If the function 
$$\upsilon: \theta\mapsto \Phi(\theta)+\log r(\theta)$$
has a computable modulus of continuity, then it 
is uniformly computable on the entire interval $[0,1]$. 
\end{tet}

\begin{lem}
[{\bf Conditional}] \label{2main-equiv}
Suppose the Conditional Implication holds. Let $\theta \in [0,1]$
be such that $\Phi(\theta)$ is finite. Then there is an oracle Turing 
Machine $M^{\phi}_1$ computing $\Phi(\theta)$ with an oracle access to 
$\theta$ if and only if there is an oracle Turing 
Machine $M^{\phi}_2$ computing $r(\theta)$ with an oracle access to 
$\theta$. 
\end{lem}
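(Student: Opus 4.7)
The plan is to let the Buff--Ch\'eritat identity
$$\upsilon(\theta) = \Phi(\theta) + \log r(\theta)$$
serve as the bridge between the two computational problems. Under the Conditional Implication, the function $\upsilon$ is uniformly computable on $[0,1]$, so with an oracle for $\theta$ one may compute $\upsilon(\theta)$ to arbitrary precision. The whole statement then reduces to the observation that, with $\upsilon(\theta)$ available, the remaining quantities $\Phi(\theta)$ and $\log r(\theta)$ differ by a computable real, and the passage between $r(\theta)$ and $\log r(\theta)$ is effective because $r(\theta)>0$.

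Strict positivity of $r(\theta)$ is exactly Brjuno's theorem: the hypothesis $\Phi(\theta)<\infty$ forces $P_{\theta}$ to have a Siegel disk at the origin, and hence $r(\theta)>0$, so that $\log r(\theta)$ is a finite real. Combined with the standard facts that $\exp:\RR\to\RR$ is computable and that $\log$ is computable on every strictly positive computable real, this yields both directions. For $(\Rightarrow)$, given $M^{\phi}_1$ computing $\Phi(\theta)$, construct $M^{\phi}_2$ by running $M^{\phi}_1$ and the algorithm for $\upsilon$ in parallel to obtain $\log r(\theta)=\upsilon(\theta)-\Phi(\theta)$, and then exponentiate. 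For $(\Leftarrow)$, given $M^{\phi}_2$ computing $r(\theta)>0$, build $M^{\phi}_1$ by first computing $\log r(\theta)$ from the approximations produced by $M^{\phi}_2$, then outputting $\upsilon(\theta)-\log r(\theta)$.

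The step that needs some care is precision bookkeeping: to output $\Phi(\theta)$ or $r(\theta)$ to precision $2^{-n}$, the auxiliary approximations to $\upsilon(\theta)$ and to the input ($\Phi(\theta)$ or $r(\theta)$) must be taken at a precision compensating for the local Lipschitz constants of $\exp$ and $\log$ near the relevant arguments. For the $(\Leftarrow)$ direction this also requires obtaining an effective positive lower bound on $r(\theta)$ from $M^{\phi}_2$ before the logarithm can be safely applied, which in turn is possible precisely because $r(\theta)>0$. I expect this routine adaptive-precision accounting to be the only real obstacle; it is standard in computable analysis and does not require new ideas beyond the Buff--Ch\'eritat relation and Brjuno's theorem.
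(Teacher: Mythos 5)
Your proof is correct and matches the intended argument: the Buff--Ch\'eritat identity $\upsilon(\theta)=\Phi(\theta)+\log r(\theta)$ together with computability of $\upsilon$ (the operative content of assuming the Conditional Implication) reduces each direction to computable arithmetic, with Brjuno's theorem supplying $r(\theta)>0$ so that $\log$ can be applied effectively after extracting a positive rational lower bound from $M^{\phi}_2$. The precision-bookkeeping remark and the observation that $\exp$ (globally) and $\log$ (on positives with a known lower bound) are computable are exactly the routine steps needed; nothing is missing.
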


\begin{figure}[ht]
\label{phi figure}
\centerline{
\mbox{\includegraphics[width=0.45\textwidth]{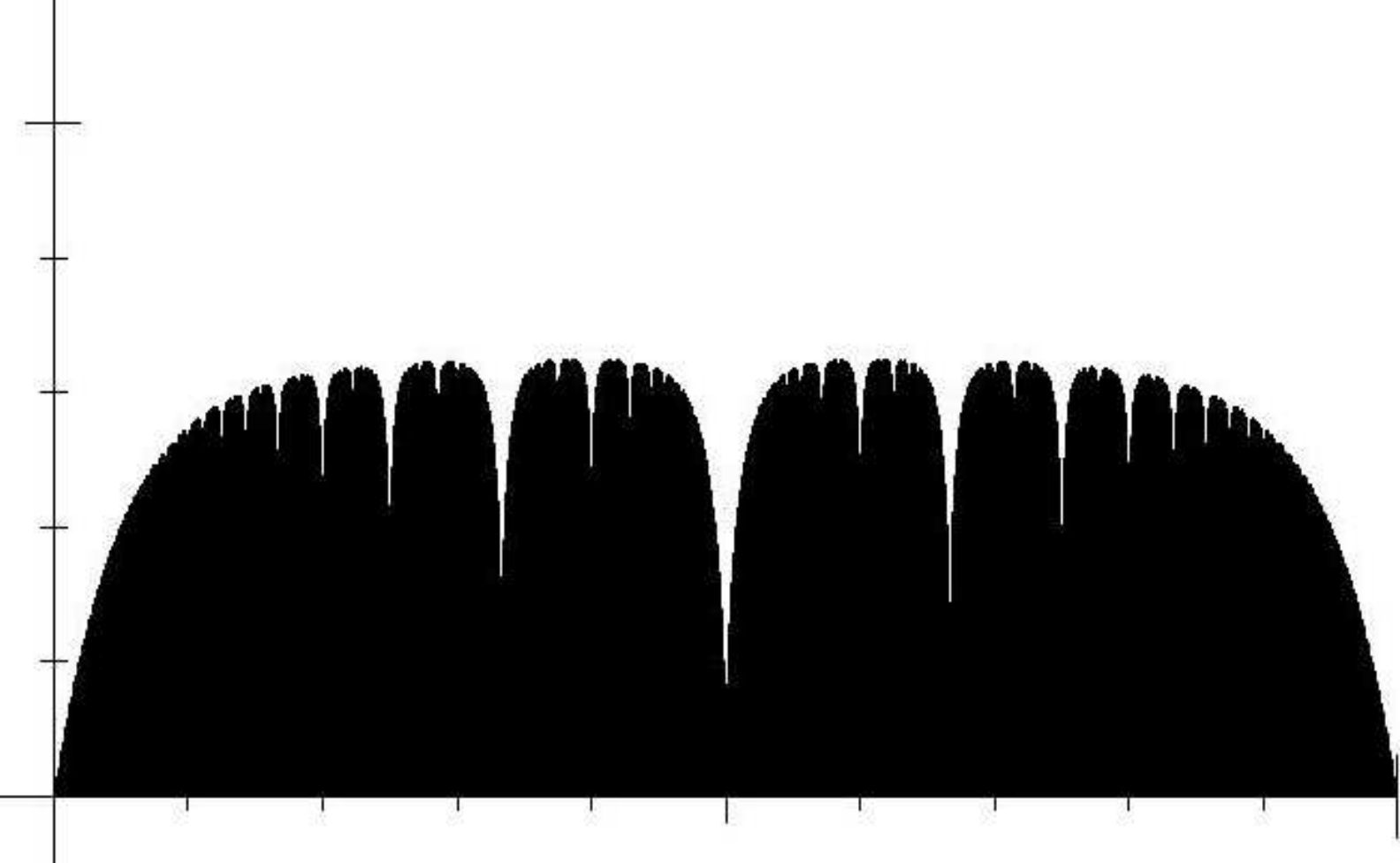}}
\mbox{\includegraphics[width=0.45\textwidth]{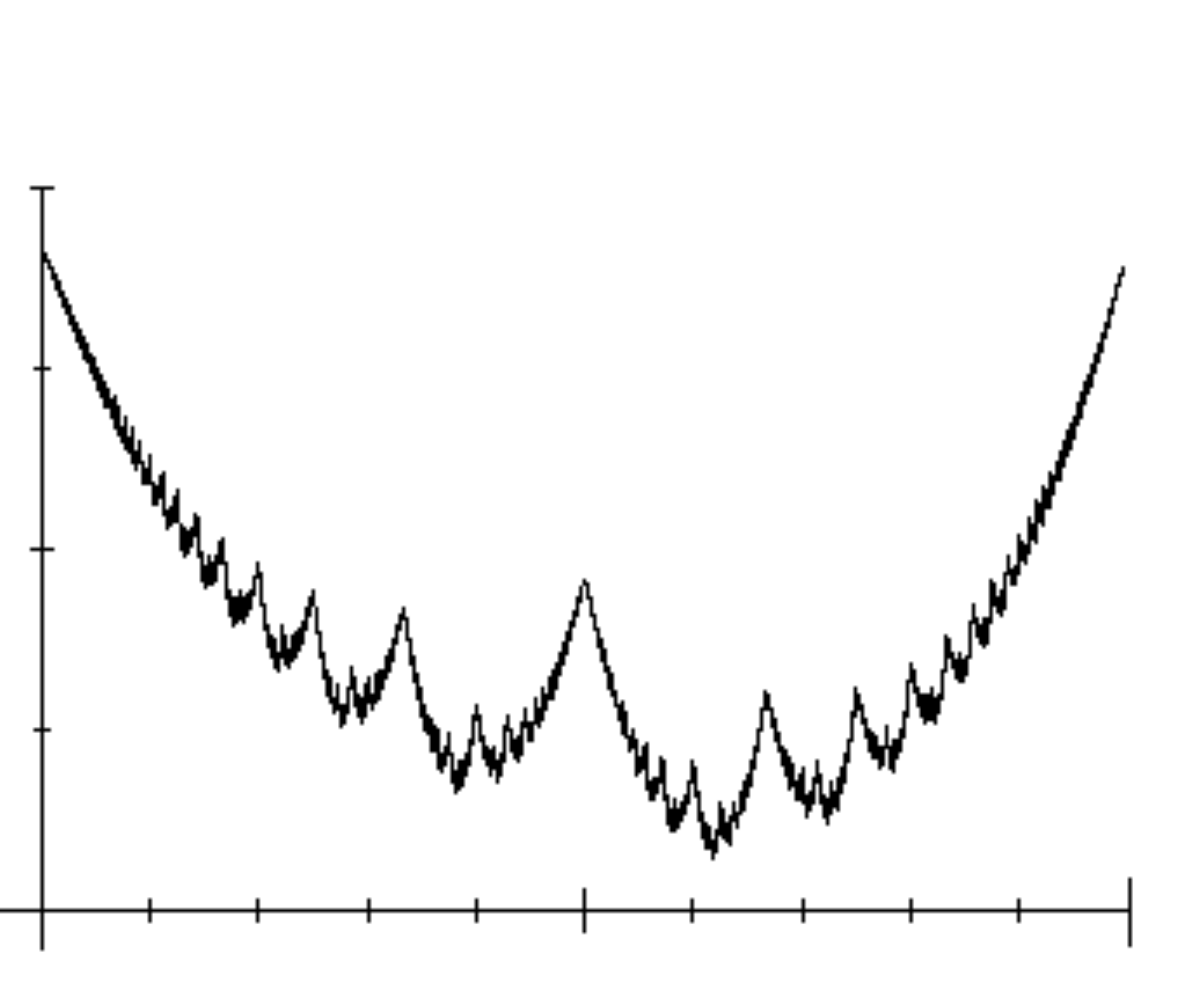}}
}
\caption{The figure on the left is an attempt to visualize the function $\Phi$,
by plotting the heights of $\exp(-\Phi(\theta))$ over a grid of Brjuno irrationals.
On the right is the graph of the (conjecturally computable) function $\upsilon(x)$.}

{\it Both figures courtesy of Arnaud Ch{\'e}ritat}

\end{figure}

\subsection{Computability of Julia sets}
When we address computability of a Julia set of a rational map $R$, it will always be by a Turing machine $M^\theta$ with an oracle access to the coefficients of the rational map $R$. This corresponds to the numerical problem of drawing a Julia set when the coefficients of the map $R$ are given with an arbitrary finite precision.

The question of computability of polynomial Julia sets was first raised in a paper of Zhong \cite{Zh}.
I was investigated exhaustively by Braverman and Yampolsky (see the monograph \cite{BY-book} as a general reference) with surprising results. For simplicity of exposition let us specialize to the case of the quadratic family.
Firstly,
\begin{thm}[\cite{BY-MMJ}]
  \label{comput-filled}
The filled Julia set $K_c$ is always computable by an oracle  Turing machine $M^\theta$ with an oracle for $c$.  

\end{thm}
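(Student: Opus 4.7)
The plan is to verify that $K_c$ is both upper-computable and lower-computable relative to an oracle for $c$; by definition this yields computability.

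For upper-computability, set $R_c = \max(2, |c|+1)$: the bound $|f_c(z)| \ge |z|^2 - |c| > |z|$ for $|z| > R_c$ gives $\{|z| > R_c\} \subset A(\infty)$, whence
\[
A(\infty) \;=\; \bigcup_{N \ge 0} f_c^{-N}\bigl(\{|z| > R_c\}\bigr).
\]
Each term is a computable open set, cut out by a strict polynomial inequality whose coefficients are computable from the oracle for $c$; so $A(\infty)$ is lower-computable and $K_c$ is upper-computable.

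For lower-computability, by \propref{p.comp-closed} it suffices to produce a uniformly computable sequence dense in $K_c$. The first piece of such a sequence is dense in $J_c = \partial K_c$: for each $n \ge 1$ the $2^n$ roots of $f_c^n(z) - z$ and their multipliers $(f_c^n)'(z)$ are uniformly computable, the condition $|(f_c^n)'(z)| > 1$ is semi-decidable, and by the Fatou--Shishikura bound only finitely many cycles are non-repelling, so dovetailing over $n$ enumerates a list of repelling periodic points that is classically dense in $J_c$. The second piece handles the interior of $K_c$ (when non-empty). By the Fatou classification together with Sullivan's no-wandering-domains theorem, every bounded Fatou component is a pre-image of a periodic one of attracting, parabolic, or Siegel type. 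In each case one semi-decidably locates a forward-invariant trapping region $V$ around the central cycle: in the attracting/parabolic case by iterating the critical orbit $\{f_c^n(0)\}$ (which converges to or accumulates on the cycle) and verifying $f_c^k(V) \Subset V$ or its parabolic analogue, and in the Siegel case by constructing a small linearized disk around a computed Siegel fixed point. The semi-decidable predicate ``the orbit of $z$ enters $V$'' then certifies $z \in K_c$ for any rational $z$, and enumerating rational points so certified together with their inverse images along all branches of $f_c$ densely populates each Fatou component.

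The main obstacle is this interior step: in the Cremer subcase $K_c = J_c$ has empty interior (by Fatou--Shishikura the presence of a Cremer cycle excludes any attracting, parabolic, or Siegel cycle, and hence any bounded Fatou component), so the first enumeration alone suffices, but otherwise the algorithm must effectively locate the non-repelling cycle and build the trapping region around it. The Sullivan and Fatou classification theorems make the case analysis exhaustive, and the computable access to periodic points together with the dynamics of the critical orbit provide the required effective handles, preserving uniformity in $c$.
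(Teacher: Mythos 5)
Your arguments for upper computability (via $A(\infty)=\bigcup_N f_c^{-N}(\{|z|>R_c\})$) and for lower-computability of $J_c$ (dovetailing over repelling periodic orbits, using the Fatou--Shishikura bound) match what the survey sketches, which is precisely the proof of the special case when $K_c$ has empty interior. The survey then states explicitly that the general case is ``rather more involved'' and defers to \cite{BY-MMJ}; you attempt to fill that in with a Fatou--Sullivan case analysis. This is fine for attracting and parabolic components, but there is a genuine gap in the Siegel subcase --- which is exactly the hard part of the theorem.

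You propose to ``construct a small linearized disk around a computed Siegel fixed point'' and then apply the semi-decidable predicate ``the orbit of $z$ enters $V$.'' Neither step is available in the Siegel case. Entering a neighborhood of a Siegel point does not certify $z\in K_c$: the local dynamics there is a rotation, not a contraction, so there is no trapping region, and for a Siegel polynomial the condition $z\in K_c$ (``the orbit stays bounded forever'') is universally quantified and hence co-semi-decidable but not semi-decidable from the oracle. More seriously, a linearized disk of computable positive radius cannot in general be produced: the conformal radius $r(\theta)$ of the Siegel disk has no computable lower bound, and can itself be a non-computable real number --- this is exactly what makes $J_c$ non-computable for certain computable $\theta$, per Theorems~\ref{th-comp1} and~\ref{noncomput-Julia} later in the same section. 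Consequently your enumeration produces, for a Siegel quadratic, no computable point strictly inside the Siegel disk other than the fixed point itself, and does not yield a sequence of computable points dense in $K_c$. That $K_c$ is nevertheless computable in the Siegel case is the delicate content of the theorem, established in \cite{BY-MMJ} by a substantially different argument which the survey does not reproduce.
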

The special case of this theorem when $K_c$ has empty interior was done in \cite{BBY1}. It is not hard to give an idea of the proof under this restriction. Indeed, the basin $A(\infty)$ is clearly lower-computable: simply take a large enough $R=R(c)$ such that for $|z|>R$, we have $|f_c(z)|>2R$. Then the basin $A(\infty)$ is exhausted by the countable union of the computable (with an oracle for $c$) open sets
$$f_c^{-n}(\{|z|>R\})\subset \riem\;;n\in\NN.$$
On the other hand, $K_c=J_c$ is a lower-computable closed set. It can be saturated by a countable sequence of computable (again with an oracle for $c$)
finite sets
$$W_k\equiv\{\text{repelling periodic orbits of }f_c\text{ with period }\leq k\}, \; k\in\NN.$$
A simpler algorithm for lower-computing $J_c$, and something that is actually used in practice, is to find a single fixed point $p\in J_c$ (elementary considerations imply that from the two fixed point of $f_c$ counted with multiplicity, at least one is either repelling or parabolic) and then
saturate $J_c$ by the sequence
$$W'_k\equiv \cup_{j\leq k}f_c^{-j}(p).$$
The proof of  the general case of Theorem~\ref{comput-filled} is rather more involved.

The next statements will directly relate computability of Julia sets with computable Riemann mapping:
\begin{thm}[\cite{BY}]
  \label{th-comp1}
Suppose $f_c$ has a Siegel periodic point $p$ and let $\Delta\ni p$ be the corresponding Siegel disk. Then $J_c$ is computable by a Turing machine $M^\phi$ with an oracle for $c$ if and only if the conformal radius of $\Delta$ is computable by a Turing machine with an oracle for  $c$. 
\end{thm}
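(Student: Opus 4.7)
The proof strategy centres on the inverse linearization at the Siegel point $p$. Since $\lambda=f_c'(p)=2p$ is computable from $c$, the formal power series $\psi^{-1}(w)=p+w+\sum_{n\ge 2}a_n w^n$ uniquely characterised by $f_c\circ\psi^{-1}(w)=\psi^{-1}(\lambda w)$ has coefficients $a_n$ uniformly computable from $c$ by a recursion read off from this functional equation. Classically, the radius of convergence of this series equals the conformal radius $r$, and $\psi^{-1}$ maps the disk $\{|w|<r\}$ conformally onto $\Delta$. Since $\Delta\subset K_c$ and $K_c$ is always computable by \thmref{comput-filled}, the constant $M:=|p|+\dia(K_c)$ is computable, and Cauchy's estimate yields $|a_n|\le M/r^n$.

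For ($\Leftarrow$), assume $r$ is computable. The Julia set $J_c$ is always lower-computable -- repelling periodic orbits in $J_c$ can be enumerated by solving $f_c^{\circ n}(z)=z$ -- so it is enough to lower-compute $F_c=A(\infty)\cup\bigcup_{n\ge 0}f_c^{-n}(\Delta)$. The basin $A(\infty)$ is always lower-computable as $\bigcup_n f_c^{-n}(\{|z|>R\})$ for any large enough $R$, and preimages under the computable map $f_c$ preserve lower-computability, so the task reduces to lower-computing $\Delta$. Writing $\Delta=\bigcup_{s<r,\,s\in\QQ}\psi^{-1}(\{|w|<s\})$: for each rational $s<r$ and rational witness $s_1\in(s,r)$, the bound $|a_n|\le M/s_1^n$ gives uniform exponential convergence of the partial sums of $\psi^{-1}$ on $\{|w|\le s\}$, producing $\psi^{-1}(\{|w|\le s\})\Subset\Delta$ as a computable compact set.

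For ($\Rightarrow$), assume $J_c$ is computable. The plan is to verify that $\Delta$ satisfies the hypotheses of \thmref{comp-riem-map} with base point $p$, so that the Riemann map $\phi:\DD\to\Delta$ with $\phi(0)=p$, $\phi'(0)>0$ becomes computable and $r=\phi'(0)$ is then read off. The point $p$ is computable from $c$ as a fixed point of $f_c$ with the prescribed multiplier. For lower-computability of $\Delta$: enumerate pairs consisting of a rational ball $B$ and a rational polygonal path $\gamma$ from the centre of $B$ to $p$, and accept $B\subset\Delta$ whenever $\dist(\overline B\cup\gamma,J_c)>0$, which is semi-decidable since $J_c$ is upper-computable. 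For lower-computability of $\partial\Delta$: by connectedness of $B$, the relation $B\cap\partial\Delta\ne\emptyset$ is equivalent to $B\cap\Delta\ne\emptyset$ together with $B\cap(\riem\setminus\overline\Delta)\ne\emptyset$. The first is semi-decidable by the previous step, while the second is semi-decidable by searching for a rational sub-ball $B'\subset B$ together with a finite family of rational balls (drawn from the Hausdorff approximations of $J_c$) whose union topologically separates $B'$ from $p$, thereby certifying that $B'$ lies in a Fatou component distinct from $\Delta$.

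The main obstacle is the density of this enumeration in $\partial\Delta$: one must verify that every $z\in\partial\Delta$ has, in every neighbourhood, a point of some Fatou component $\ne\Delta$. This is the key place where the dynamical structure intervenes. It follows from the classical density of backward orbits: for a base point $w\in\Delta$ distinct from $p$, the set $\bigcup_n f_c^{-n}(w)$ is dense in $J_c$; among the $2^n$ preimages of $w$ under $f_c^{\circ n}$ only one lies on the rotation orbit inside $\Delta$, the remaining ones populating the other preimage Fatou components $f_c^{-k}(\Delta)\setminus\Delta$ for $k\ge 1$, so that accumulation at $z\in\partial\Delta$ is realised by points outside $\overline\Delta$. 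With this density in hand, the semi-decision via $J_c$-approximate topological separators is effective, the hypotheses of \thmref{comp-riem-map} are met, and $r$ is computable.
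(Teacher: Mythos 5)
The survey cites the Braverman--Yampolsky paper for this theorem and does not reproduce a proof, so there is nothing in-text to compare against; I can only assess your argument on its own terms, and it is essentially sound. The direction $(\Leftarrow)$ --- lower-computing the Fatou set as $A(\infty)\cup\bigcup_n f_c^{-n}(\Delta)$ with $\Delta$ handled via the linearizing series and Cauchy estimates, together with the standard lower-computability of $J_c$ --- is correct; the one point to tighten is that producing $\psi^{-1}(\{|w|\le s\})$ as a computable compact set does not by itself lower-compute the open set $\Delta$: you should additionally enumerate ideal balls inside the image, e.g.\ via the Koebe one-quarter theorem applied to $\psi^{-1}$ at rational points $|w_0|<s$ together with a lower bound on $|(\psi^{-1})'(w_0)|$. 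This is routine but should be said.

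For the direction $(\Rightarrow)$, invoking \thmref{comp-riem-map} for the pointed domain $(\Delta,p)$ and then reading off $r=\phi'(0)$ is a perfectly valid route, and a natural one within the framework of this survey. However, you have made the lower-computability of $\partial\Delta$ substantially harder than it needs to be. Since $\partial\Delta\subset J_c$ and an ideal ball $B$ is connected,
\begin{equation*}
B\cap\partial\Delta\neq\emptyset\quad\Longleftrightarrow\quad B\cap\Delta\neq\emptyset\ \text{ and }\ B\cap J_c\neq\emptyset,
\end{equation*}
and both conditions on the right are directly semi-decidable: the first because $\Delta$ is lower-computable (by your path-to-$p$ argument), the second because $J_c$ is assumed computable, hence lower-computable. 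Thus the entire mechanism of certifying $B\cap(\riem\setminus\overline\Delta)\ne\emptyset$ by searching for finite topological separators among the Hausdorff approximants of $J_c$ --- and the accompanying digression on density of backward orbits --- is unnecessary. (If one did want the density fact that every point of $\partial\Delta$ is approximated from $\riem\setminus\overline\Delta$, it already follows from $J_c=\partial A(\infty)$, so the backward-orbit counting is also not needed there.) Two small additional remarks: the theorem allows a Siegel \emph{periodic} point, while your formula $\lambda=f_c'(p)=2p$ implicitly assumes $p$ is fixed --- pass to the iterate $f_c^k$ with $k$ the period and treat the full cycle of Siegel disks; and the recursion for the $a_n$ is well-posed only because the small divisors $\lambda^n-\lambda$ are nonzero for $\theta\notin\QQ$, which you should state, noting that this gives computability of the $a_n$ but, precisely because of small divisors, no uniform complexity bound.
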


\begin{thm}[\cite{BY}]
  \label{th-comp2}
  Suppose $f_c$ has no Siegel points. Then the Julia set $J_c$ is computable by a Turing machine $M^\phi$ with an oracle for $c$.
  \end{thm}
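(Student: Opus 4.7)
The plan is to combine \thmref{comput-filled}, which gives that $K_c$ is computable from $c$, with the identity $J_c=\partial K_c$. Lower-computability of $J_c$ is immediate: for every $p\geq 1$, the repelling periodic points of period $p$ are the roots of $f_c^p(z)=z$ with $|(f_c^p)'(z)|>1$, they are uniformly computable from $c$, and by the last item of \propref{properties-Julia} together with the Fatou--Shishikura bound (at most one non-repelling cycle in $\CC$ for a quadratic) their union is dense in $J_c$. \propref{p.comp-closed} then yields that $J_c$ is lower-computable.

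For upper-computability I need a semi-algorithm that, given an ideal ball $B$ disjoint from $J_c$, halts. Since $J_c=\partial K_c$, disjointness is equivalent to $B\cap K_c=\emptyset$ or $B\subset \operatorname{int}(K_c)$. The first test is semi-decidable because $K_c$ is upper-computable by \thmref{comput-filled}. For the second I combine the no-Siegel hypothesis with the Fatou--Sullivan classification: bounded periodic Fatou components must be attracting, super-attracting, parabolic, Siegel, or Herman; polynomials admit no Herman rings; the hypothesis rules out Siegel disks; and Cremer points lie in $J_c$. Hence $\operatorname{int}(K_c)$ is either empty (when all cycles in $\CC$ are repelling, or when the unique non-repelling cycle is Cremer) or equal to the full basin, preperiodic components included, of the unique attracting or parabolic cycle.

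In the empty-interior case $J_c=K_c$ and upper-computability of $J_c$ follows directly from \thmref{comput-filled}. In the attracting case, one searches in parallel, over $p$, for a root of $f_c^p(z)=z$ with $|(f_c^p)'(z)|<1$, picks a Euclidean disk $D$ around it on which $f_c^p$ is a strict contraction, and halts on $B$ as soon as some iterate $f_c^{kp}(B)$ is contained in $D$. In the parabolic case, with multiplier $e^{2\pi i r/q}$, one searches instead for a root of $f_c^p(z)=z$ whose derivative is a $q$-th root of unity, builds an explicit attracting Leau--Fatou petal $P$ for $f_c^{pq}$ from the local normal form $f_c^{pq}(z)=z_0+(z-z_0)+a(z-z_0)^{\nu+1}+O((z-z_0)^{\nu+2})$ via the standard change of coordinates $w=-1/(\nu a (z-z_0)^\nu)$, and halts on $B$ as soon as some iterate $f_c^{kpq}(B)$ lies in $P$. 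Running all of these semi-tests, together with the $K_c$-avoidance test, in parallel over all periods $p$ produces the required enumeration of balls disjoint from $J_c$.

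The step I expect to do the real work is the parabolic one: unlike in the attracting case, the contraction toward the parabolic point is not Euclidean but visible only in Fatou coordinates, so rigorously certifying $f_c^{kpq}(B)\subset P$ requires computable bounds on the remainder in the normal form together with a concretely described invariant petal that actually lies in the basin. The attracting and empty-interior branches are comparatively routine, and the no-Siegel hypothesis is used precisely to exclude the one situation---a Siegel disk of uncontrolled conformal radius---in which the upper-computability semi-test would have no converging branch, in exact agreement with \thmref{th-comp1}.
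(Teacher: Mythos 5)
The survey itself does not prove Theorem~\ref{th-comp2}; it cites \cite{BY} and only sketches the easy sub-case $J_c=K_c$ inside the discussion of Theorem~\ref{comput-filled}. Your plan --- lower-compute $J_c$ from the dense set of repelling cycles, upper-compute it by lower-computing the Fatou set, and split the latter into the escape test $\overline B\cap K_c=\emptyset$ plus trap tests for the bounded Fatou components, with the classification of periodic Fatou components and the Fatou--Shishikura bound reducing the trap construction to a single attracting or parabolic cycle --- is the same case analysis that \cite{BY} carries out, and you have correctly identified the parabolic trap as the part that carries essentially all of the technical weight. Two small imprecisions worth tightening: what is actually semi-decidable from upper-computability of $K_c$ is $\overline B\cap K_c=\emptyset$ for closed ideal balls $\overline B$ (not the open-ball disjointness as stated), which is what one wants anyway for lower-computing an open set; and in the attracting test one should iterate $f_c$ (not only $f_c^p$) and test against the union of the $p$ trap disks, since a connected $\overline B$ in the basin may be captured by any point of the cycle.

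The one conceptual point your sketch does not make explicit, and which is exactly where the real work hides, is that the algorithm can never \emph{decide} that a given cycle is parabolic: equality of $(f_c^p)'(z_0)$ with a root of unity is an equality test on reals, which is undecidable. So the parabolic branch must run all candidate $(p,q,\nu)$ in parallel, and the certification that the constructed region $P$ lies in the Fatou set has to be \emph{unconditionally sound}: it must never validate a region meeting $J_c$, even when the hypothesis is wrong (multiplier repelling or Cremer but numerically close to $e^{2\pi i r/q}$). The normal-form error bounds you mention are not by themselves enough for this, because they are derived assuming the hypothesis. One standard fix is to make the final certificate independent of the hypothesis --- for instance, verify by interval arithmetic a containment of the form $\overline{f_c^{pq}(P)}\subset P\cup E$ with $E$ an explicitly small neighborhood of the approximate cycle point, together with a direct estimate showing orbits started in $E$ re-enter $P$, and then conclude $P\subset F_c$ from normality of a bounded forward orbit (Montel). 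If the hypothesis is false, such a verification necessarily fails for small enough $E$, because a repelling or Cremer cycle does not admit such a trap. With that safeguard spelled out, your outline is a correct proof plan in the spirit of \cite{BY}; without it, the parabolic branch as written could in principle produce false positives.
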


Let us now specialize further to the case of the polynomials $P_\theta(z)=e^{2\pi i\theta}z+z^2$ with a neutral fixed point at the origin.
The main result of Braverman and Yampolsky is the following:
\begin{thm}[\cite{BY-MMJ}]
  \label{noncomput-Julia}
  There exist computable values of $\theta$ (in fact, computable by an explicit, although very complicated, algorithm) such that
  $J_\theta$ is not computable.

\end{thm}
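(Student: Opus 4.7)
The plan is to use Theorems~\ref{th-comp1} and \ref{th-comp2} to reduce the task to producing a single computable $\theta\in[0,1)$ which is Brjuno (so $P_\theta$ has a Siegel point at $0$) but whose Siegel disk conformal radius $r(\theta)$ is not computable. Given such a $\theta$, Theorem~\ref{th-comp1} immediately forces $J_\theta$ to be non-computable. Since $\theta$ itself is computable, the oracle access to $\theta$ in Theorem~\ref{th-comp1} plays no role.

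The construction exploits Theorem~\ref{phi-cont}: $\upsilon(\theta)=\Phi(\theta)+\log r(\theta)$ extends continuously and $1$-periodically to $\RR$, hence is bounded by some absolute constant $C>0$. For any Brjuno $\theta,\theta'$ this gives
\[
\log r(\theta)-\log r(\theta')=\bigl(\upsilon(\theta)-\upsilon(\theta')\bigr)-\bigl(\Phi(\theta)-\Phi(\theta')\bigr),
\]
so a large manipulation of $\Phi$ (via the continued fraction) produces a correspondingly large and computable change in $\log r$, up to a bounded error of size $\le 2C$.

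I would build $\theta=[a_1,a_2,\ldots]$ stage by stage, diagonalizing against an enumeration $M_1,M_2,\ldots$ of candidate algorithms for $r(\theta)$. Between diagonalization stages the coefficients are set to $a_n=1$ (golden-mean-like tail), which keeps $\theta_1\cdots\theta_{n-1}$ shrinking exponentially and contributes a convergent tail to $\Phi$. At stage $k$, with a commitment $a_1,\ldots,a_{N_{k-1}}$ in place, I simulate $M_k(n_k)$ for a large precision $n_k$. If $M_k(n_k)$ fails to halt, $M_k$ is already disqualified. If it halts with output $\rho_k$, I choose an index $N_k\gg N_{k-1}$ and a very large value $B_{N_k}$, then set $a_n=1$ for $N_{k-1}<n<N_k$ and $a_{N_k}=B_{N_k}$. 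This injects a new term in the Brjuno sum of order $\theta_1\cdots\theta_{N_k-1}\log B_{N_k}$, which, by taking $B_{N_k}$ enormous but $N_k$ large enough, can be made (i) much larger than $2C+2^{-n_k+2}$, yet (ii) as small as I wish once summed against the exponentially small prefactor. Thus the total change in $\Phi$ between the provisional $\theta^{(k,0)}$ (tail all ones) and the updated $\theta$ can be prescribed in a large window, while Brjuno-ness is preserved because each stage contributes only a summable amount to $\Phi$ and the default tail $a_n=1$ afterwards contributes a convergent classical Brjuno series.

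The remaining step is quantitative: given $\rho_k$, I must realize the perturbation so that $|r(\theta)-\rho_k|>2^{-n_k}$. Using the bound $|\upsilon|\le C$ I have $\log r(\theta)\le -\Phi(\theta)+C$; by boosting $\Phi(\theta)$ by an amount exceeding $|\log\rho_k|+n_k\log 2+2C+1$ through the single coefficient $a_{N_k}=B_{N_k}$, the resulting $r(\theta)$ is forced to be $<\rho_k-2^{-n_k}$, so $M_k$ cannot be a correct $2^{-n_k}$-approximation algorithm. The whole procedure is effective: each $a_n$ is determined by finitely many halting checks and arithmetic, so $\theta$ is a computable real.

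The main obstacle is the \emph{balance} at each stage: I need to perturb $\Phi$ by an amount large enough to defeat $M_k$, yet the cumulative contribution of all future perturbations must still keep $\Phi(\theta)$ finite so that the Siegel disk actually exists. The key point that makes this work is that Theorem~\ref{phi-cont} gives me the qualitative a priori bound $\sup_\RR|\upsilon|<\infty$ \emph{without} any effective modulus of continuity: I do not need to compute $\upsilon$, only to know that $\log r$ tracks $-\Phi$ up to a universally bounded error. This is what decouples the diagonalization (which operates on $\Phi$, a very concrete number-theoretic quantity controllable by a single continued fraction entry) from any hard information about the non-explicit function $\upsilon$. The choice of the growth schedule for $N_k$ and $B_{N_k}$ is the one genuine technical computation and is essentially the one carried out in \cite{BY-MMJ}.
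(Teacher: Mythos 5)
Your reduction via Theorem~\ref{th-comp1} to producing a computable Brjuno $\theta$ with non-computable $r(\theta)$ is exactly right, and diagonalizing by inserting large continued-fraction entries is indeed the core idea in \cite{BY-MMJ}. But the way you implement the diagonalization, resting on the Buff--Ch\'eritat bound $\sup|\upsilon|\le C$, has a structural flaw that I do not think can be patched within your framework.

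The central difficulty is the size of the boosts. To guarantee from the inequality $\log r\le -\Phi+C$ that the new value of $r$ has genuinely dropped past $\rho_k$, your boost to $\Phi$ has to exceed $2C$: before the boost you only know $\log r\ge -\Phi-C$, after you only know $\log r\le -\Phi-B+C$, so a boost $B\le 2C$ gives no guaranteed change in $r$ at all. But then every diagonalization stage that requires a boost increments $\Phi(\theta)$ by at least $2C>0$. With infinitely many machines to defeat, this forces $\Phi(\theta)=\sum_k(\text{stage-}k\text{ increment})=+\infty$. A $\theta$ with $\Phi(\theta)=\infty$ is not Brjuno, so $P_\theta$ has no Siegel disk at the origin, Theorem~\ref{th-comp1} no longer applies, and in fact Theorem~\ref{th-comp2} then makes $J_\theta$ \emph{computable} --- the opposite of what you want. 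Your ``balance'' remark acknowledges that $\Phi$ must stay finite and attributes the resolution to ``a growth schedule for $N_k$ and $B_{N_k}$,'' but no schedule helps here: taking $N_k$ large only forces $B_{N_k}$ to be astronomically larger in order to keep the same increment $\theta_1\cdots\theta_{N_k-1}\log B_{N_k}\ge 2C$; it does not let the increment itself tend to zero.

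There are two further problems. First, the constant $C$ is an ingredient of your algorithm (the boost size is $|\log\rho_k|+n_k\log 2+2C+1$), but Theorem~\ref{phi-cont} gives no effective bound on $\sup|\upsilon|$, so $C$ is not available as data; at best you get a nonconstructive existence statement rather than the claimed ``explicit algorithm.'' Second, your argument only refutes $M_k$ by driving $r$ \emph{below} $\rho_k-2^{-n_k}$, which is impossible when $\rho_k\le 2^{-n_k}$; in that case you must instead keep $r$ \emph{above} $\rho_k+2^{-n_k}$, which requires a usable, computable lower bound on the final $r(\theta)$ that survives all future boosts --- precisely the resource your scheme destroys by letting $\Phi$ grow.

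The proof in \cite{BY-MMJ} takes a different route around all of this. It does not invoke Buff--Ch\'eritat at all. It relies instead on explicit, one-sided estimates: Yoccoz's \emph{effective} lower bound $\log r\ge -\Phi-C_1$ with $C_1$ a known constant, together with an explicit upper bound on $r(\theta)$ coming from near-parabolic dynamics when a continued-fraction entry is large, and the fact that for tails that are eventually constant the quantity $r$ is itself computable to any precision. This last point is what lets the construction calibrate each boost: rather than arguing abstractly that ``$\Phi$ went up so $r$ went down by at least $B-2C$,'' the algorithm \emph{computes} the pre- and post-boost values of $r$ and searches for a boost that creates a gap of the needed width while contributing a summably small amount to $\Phi$. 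That is the mechanism that keeps $\theta$ Brjuno, and it is missing from your proposal.
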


In view of Theorem~\ref{th-comp1}, this is equivalent to the fact that the conformal radius $r_\theta$ is not computable.
Assuming Conditional Implication, this would also be equivalent to the non-computability of the value of $\Phi(\theta)$. In fact, computable values of $\theta$ for which $\Phi(\theta)$ cannot be computed (unconditionally) can also be constructed (see \cite{BY-book}).
In fact, assuming Conditional Implication, it is shown in \cite{BY-MMJ} that $\theta$ is Theorem~\ref{noncomput-Julia} can be poly-time. 

We thus observe a surprising scenario: for above values of $\theta$ a finite inverse orbit of a point can be effectively, and possibly efficiently, computed with an arbitrary precision. Yet the repeller $J_\theta$ cannot be computed at all. This serves as a cautionary tale for applications of the numerical paradigm described above.

Fortunately, the phenomenon of non-computability of $J_c$ is quite rare. Such values of $c$ have Lebesgue measure zero. It is shown in \cite{BY-book}, that assuming Conditional Implication they have linear measure (Hausdorff measure with exponent $1$) zero -- a very meager set in the complex plane  indeed. Furthermore, Conditional Implication and some high-level theory of Diophantine approximations(\cite{BY-book}) imply that such values of $c$ cannot be algebraic, so even if they are easy to compute, they are not easy to write down.

It is worth noting that in \cite{BY-lc}, Braverman and Yampolsky constructed non-computable quadratic Julia sets which are locally connected. In view of the above discussed theory, for such maps, the basin of infinity
$A(\infty)=\hat\CC\setminus K_c$ is a natural example of a simply-connected domain on the Riemann sphere
with locally connected boundary such that the Riemann map is computable, but the Carath{\'e}odory extension is not (the boundary does not have a computable Carath{\'e}odory modulus).

\subsection{Computational complexity of Julia sets}
While the computability theory of polynomial Julia sets appears complete, the study of computational complexity of computable Julia sets offers many unanswered questions. Let us briefly describe the known results. As before, in all of them the Julia set of a rational function $R$ is computed by a Turing Machine $M^\phi$ with an oracle for the coefficients of $R$.

The following theorem is independently due to Braverman \cite{thesis} and Rettinger \cite{Ret}:

\begin{thm}
  \label{hyperb-polytime}
Every hyperbolic Julia set is poly-time.

\end{thm}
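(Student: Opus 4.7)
The plan is to exploit the uniform expansion characterising hyperbolicity to reduce the decision at each pixel to a bounded number of forward iterations of $R$, and then to check that the interval arithmetic cost of these iterations grows only polynomially in the desired precision. Since the algorithm is allowed to depend on $R$, we may regard the hyperbolicity constants and any attracting cycles as hard-coded parameters determined by $R$; the oracle access to the coefficients is only used to evaluate $R$ itself to arbitrary precision.

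First I would precompute a geometric \emph{trapping picture}. By hyperbolicity the post-critical set $P(R)$ lies in a compact subset of $F(R)$, so we can fix once and for all
\begin{itemize}
\item a small neighbourhood $T \subset F(R)$ of the union of the attracting cycles and the finite forward orbits of the critical points, with $R(\overline{T}) \subset \mathrm{int}(T)$;
\item an open neighbourhood $V \supset J(R)$, disjoint from $P(R)$, on which there is a conformal metric and a constant $\lambda > 1$ with $\|DR(z)\|_{\mathrm{conf}} \ge \lambda$ for $z \in V$;
\item a constant $L = \max_{\hat\CC}|R'|$ controlling forward error amplification in the Euclidean metric.
\end{itemize}
Standard hyperbolic theory then gives two quantitative facts. (a)~If $z$ lies in the Fatou set and $\mathrm{dist}(z,J(R)) \ge r$, the forward orbit of $z$ enters $T$ within $K(r) = O(\log(1/r))$ iterations: pulling back from outside $V$ a bounded uniform number of times to reach a shell around $J(R)$, each inverse branch contracts by $\lambda^{-1}$, so to stay near $J(R)$ for $k$ iterations the initial distance must be at most $C \lambda^{-k}$. (b)~The set $T$ can be chosen so membership in $\mathrm{int}(T)$ and non-intersection with $\overline{T}$ are each decidable in polynomial time on dyadic points with certified precision.

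The main algorithm on input $(n,z)$, with $z$ a dyadic point in the standard grid, is then: iterate $R$ for $K = Cn$ steps (with $C$ large enough so $C\log\lambda$ exceeds the decay rate of $r$ above), maintaining each approximate iterate $\tilde z_k$ with additive error at most $2^{-M}$ for $M = C'n$, where $C' > \log_2 L \cdot C$ plus a safety margin. Output $0$ the moment some $\tilde z_k$ is certified to lie in $\mathrm{int}(T)$, and output $1$ if no such $k \le K$ is found. Correctness follows because, if $\mathrm{dist}(z,J(R)) > 2 \cdot 2^{-n}$, then fact (a) places the true orbit $z_k$ inside $T$ for some $k \le K$, and the guaranteed error $L^k \cdot 2^{-M} \le 2^{-n}$ is small enough that the approximate $\tilde z_k$ will also be certified inside $\mathrm{int}(T)$; conversely, if $B(z, 2^{-n}) \cap J(R) \neq \emptyset$ then the orbit cannot enter $T$ within $K$ steps by the $R$-invariance of $J(R)$ and our choice of $T$. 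Each single iterate of $R$ at precision $2^{-M}$ is a fixed polynomial evaluation in time $\mathrm{poly}(M) = \mathrm{poly}(n)$, and we do $O(n)$ of them, giving total complexity $\mathrm{poly}(n)$.

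The main obstacle is bookkeeping the \textbf{loss of precision}: forward iteration expands errors at rate $\le L^k$ in the Euclidean metric, even though in the conformal metric on $V$ we have the favourable rate $\lambda^{-k}$ for inverse branches. The analysis must therefore (i)~match the working precision $M$ to the iteration budget $K$ so that accumulated Euclidean error never obscures the decision between $\mathrm{int}(T)$ and a neighbourhood of $J(R)$, and (ii)~ensure that the orbit does not spend a super-logarithmic time ``hesitating'' in the shell $V \setminus T$. Both requirements are met precisely by the hyperbolic dichotomy — orbits either escape exponentially fast into $T$ or stay exponentially close to $J(R)$ — which is what makes the total complexity $O(n^c)$ rather than exponential in $n$.
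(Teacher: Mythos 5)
Your high-level picture (iterate forward at polynomially bounded precision, use hyperbolic escape) is the right starting point and matches the ``Distance Estimator'' philosophy the paper attributes to Milnor, Braverman, and Rettinger. However, your decision rule --- \emph{output $0$ iff the orbit enters $T$ within $K=Cn$ steps} --- is incorrect, because the escape time alone does not determine $\dist(z,J(R))$ to within a multiplicative constant when the expansion along $J(R)$ is non-uniform. Concretely, suppose $J(R)$ contains two repelling fixed points $p$ and $q$ with $|R'(p)|=\lambda_1$, $|R'(q)|=\lambda_2$, and $\lambda_2$ substantially larger than $\lambda_1$ (e.g.\ $z\mapsto z^2-1$, whose two repelling fixed points have multipliers of modulus $\approx 1.24$ and $\approx 3.24$). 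Then a point at distance $2^{-n}$ from $p$ takes roughly $n/\log\lambda_1$ steps to leave a fixed shell $V$ around $J$, while a point at distance $2^{-n'}$ with $n'\gg n$ from $q$ takes only roughly $n'/\log\lambda_2$ steps. If $K$ is chosen large enough to certify escape from near $p$ when $\dist\geq 2\cdot 2^{-n}$, then a point much closer than $2^{-n}$ to $q$ can also escape within $K$ steps, and your algorithm wrongly outputs $0$ in a case where (\ref{star1}) mandates $1$. Your stated justification for the converse --- ``by the $R$-invariance of $J(R)$'' --- is vacuous here: invariance constrains orbits \emph{on} $J(R)$, not orbits merely near it.

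The missing ingredient is derivative tracking together with the Koebe distortion theorem. The actual algorithm iterates $z_k=R^k(z)$ \emph{and} the cocycle $D_k=(R^k)'(z)$, stops at the first $k$ with $z_k$ outside the shell $V$, and then estimates $\dist(z,J(R))\asymp \dist(z_k,J(R))/|D_k|$ with uniformly bounded distortion constants supplied by Koebe (applicable because the inverse branch of $R^k$ along the orbit is univalent on a disk of definite size in $V$, which lies at definite distance from the postcritical set by hyperbolicity). The derivative $|D_k|$ cancels the non-uniformity that defeats the pure escape-time test, turning a multiplicative ambiguity into a bounded one, which is exactly what is needed to decide the pixel at precision $2^{-n}$. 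Your precision bookkeeping ($M=C'n$ bits against $L^K$ amplification, $K=O(n)$ iterations, each $\mathrm{poly}(M)$ arithmetic) is otherwise fine and carries over once the decision rule is corrected; the additional cost of maintaining $D_k$ is also polynomial. To repair the proof, replace your fact (a) and the escape-time test by this two-sided distance estimate, and include the Koebe-based lemma that the estimate is correct up to an absolute multiplicative constant depending only on $R$.
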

We note that the poly-time algorithm described in the above papers has been known to practitioners as {\it Milnor's Distance Estimator} \cite{Milnor-89}. Specializing again to the quadratic family $f_c$, we note that Distance Estimator becomes very slow (exp-time) for the values of $c$ for which $f_c$ has a parabolic periodic point. This would appear to be a natural class of examples to look for a lower complexity bound. However, surprisingly, Braverman \cite{braverman-parabolic} proved:

\begin{thm}
Parabolic quadratic Julia sets are poly-time.
\end{thm}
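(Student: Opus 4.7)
The main obstacle to directly applying the argument behind \thmref{hyperb-polytime} is that near a parabolic periodic point $p$ of $f_c$, the derivative $|(f_c^n)'(z)|$ grows only polynomially in $n$, so Milnor's Distance Estimator slows down catastrophically. The plan is to localize this slowdown to a small, controlled neighborhood of the parabolic cycle and to run a different, Fatou-coordinate-based algorithm there.

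After replacing $f_c$ by an iterate, which leaves $J_c$ unchanged by \propref{properties-Julia}, assume $p$ is a parabolic fixed point of $g = f_c^m$ with $g'(p) = 1$. The Leau-Fatou flower theorem produces finitely many attracting and repelling petals at $p$, and on each petal $g$ is conjugate via a Fatou coordinate $\Phi$ to the unit translation $w \mapsto w+1$ on a half-plane. Fix a small computable $r_0 > 0$ and set $U = \bigcup_j B(g^j(p), r_0)$. Outside $U$, the invariant set $J_c \setminus U$ is uniformly expanding for a sufficiently high iterate of $g$, since $p$ was the only obstruction to expansion, and the argument of \thmref{hyperb-polytime} produces the correct pixel decision for points of $\riem \setminus U$ in time $\mathrm{poly}(n)$.

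Inside $U$, I would compute $\Phi$ itself with precision $2^{-n}$ in time $\mathrm{poly}(n)$. After a computable polynomial change of coordinates bringing $g$ to the normal form $z \mapsto z + z^{k+1} + O(z^{k+2})$, the Fatou coordinate has the asymptotic expansion $\Phi(z) = -1/(k z^k) + \cdots$ and can be read off as $\lim_{N\to\infty}(\Phi_0 \circ g^N - N)$, where $\Phi_0$ is the leading-order normalization; the convergence is geometric in a suitable model coordinate, so both $\Phi$ and $\Phi^{-1}$ become poly-time computable on $U$. The intersection $J_c \cap U$, in the $w$-coordinate, is then obtained by iterating the contracting inverse branches of $g$ (which in $w$-coordinates are essentially the shift by $-1$) starting from the already-approximated points of $J_c \cap \partial U$, and $O(n)$ such iterations suffice to drive the pixel-sized error down to $2^{-n}$.

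The hardest step will be the quantitative ``parabolic transit'' estimate. For a pixel of radius $2^{-n}$ inside $U$, I need to show that only polynomially many iterations of $g^{\pm 1}$ in Fatou coordinates are required before the orbit reaches $\partial U$, where the hyperbolic estimates take over. In the Euclidean metric the escape time from a parabolic neighborhood behaves like $\dist(z,p)^{-k}$, which would be exponential in $n$ and would kill the complexity bound; the key observation is that in the Fatou coordinate the escape time is instead proportional to $|\Re \Phi(z)|$, which for a pixel at distance at least $2^{-n}$ from $p$ is only polynomial in $n$. Combining this parabolic escape estimate with the hyperbolic Distance Estimator outside $U$ and the poly-time computation of $\Phi^{\pm 1}$ yields a poly-time algorithm for the pixel decision on all of $\riem$.
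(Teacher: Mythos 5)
Your instinct to split the analysis into a hyperbolic part (handled by Distance Estimator as in \thmref{hyperb-polytime}) and a neighborhood of the parabolic cycle (handled via Fatou coordinates) is in fact the right strategy, and it is roughly what the proof in \cite{braverman-parabolic} does. The trouble is in the step you flag yourself as ``the hardest,'' and it is a genuine error, not just a gap to fill.

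Your key claim is that the escape time from the parabolic neighborhood, measured in iterations of $g$, is controlled by $|\Re\Phi(z)|$ and that this quantity is \emph{polynomial} in $n$ for a pixel at distance $\geq 2^{-n}$ from $p$. Both halves of that sentence are individually fine, but together they contradict your own preceding sentence. The Fatou coordinate has the asymptotics $\Phi(z)\sim -1/(kz^k)$, so for $\dist(z,p)\asymp 2^{-n}$ one has $|\Re\Phi(z)|\asymp 2^{kn}$. Since one iteration of $g$ advances $\Re\Phi$ by exactly $1$, the number of iterations needed to transit the petal is $\asymp 2^{kn}$, which is the same $\dist(z,p)^{-k}$ escape time you correctly identified as exponential in the Euclidean picture. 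Passing to the Fatou coordinate renames the iterates; it does not reduce their number. As written, your algorithm would still make exponentially many calls to $g^{\pm 1}$, so the complexity bound is not established.

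The correct use of the Fatou coordinate is not to count iterations in it but to \emph{avoid iterating at all}: since $\Phi$ conjugates $g$ to $w\mapsto w+1$, the $N$-fold iterate $g^N$ near $p$ is $\Phi^{-1}(\Phi(\cdot)+N)$, and once $\Phi$, $\Phi^{-1}$ are shown to be computable to precision $2^{-n}$ in time $\mathrm{poly}(n)$, the quantity $g^N(z)$ can be evaluated in one shot, in time polynomial in $n$ and in the bit length of $N$ (i.e.\ $O(\log N)=O(n)$ even when $N\asymp 2^{kn}$). This ``teleporting'' through the parabolic bottleneck, rather than a polynomial iteration count, is what makes the algorithm poly-time. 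A second, more minor, issue is your claim that $\Phi_0\circ g^N - N$ converges geometrically to $\Phi$: for the parabolic normal form the convergence of this sequence is only polynomial in $N$, so to get poly-time computability of $\Phi$ one must either match the normalization $\Phi_0$ to $\Phi$ to high enough order, or use a more refined asymptotic expansion; this is one of the places where \cite{braverman-parabolic} has to do real quantitative work. With those two repairs the outline becomes sound and matches the cited proof in spirit.
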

The algorithm presented in \cite{braverman-parabolic} is again explicit, and easy to implement in practice -- it is a major improvement over Distance Estimator.

On the other hand, Binder, Braverman, and Yampolsky \cite{BBY2} proved:
\begin{thm}
There exist Siegel quadratics of the form $P_\theta(z)=e^{2\pi i\theta}z+z^2$ whose Julia set have an arbitrarily high time complexity.
\end{thm}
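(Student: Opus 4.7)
The plan is to combine three ingredients from the excerpt: (a) the complexity-sensitive form of \thmref{th-comp1}, which polynomially reduces the computation of $J_\theta$ to the computation of the conformal radius $r(\theta)$ of the Siegel disk at $0$; (b) the Buff--Ch{\'e}ritat identity from \thmref{phi-cont}, which writes $\log r(\theta) = \upsilon(\theta) - \Phi(\theta)$ with $\upsilon$ continuous and $1$-periodic, so that small perturbations of $\theta$ cause $\log r$ and $-\Phi$ to move together up to a vanishing error; and (c) a diagonal construction of $\theta$ by stages along its continued fraction expansion, designed so that $\Phi(\theta)$ — and hence $r(\theta)$ — is arbitrarily slow to approximate. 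The theorem will follow: given any computable $q:\NN\to\NN$, it suffices to produce a computable Brjuno $\theta$ such that any oracle Turing machine approximating $r(\theta)$ to precision $2^{-n}$ takes more than $q(n)$ steps for infinitely many $n$.

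The construction is by stages. Fix an effective enumeration $M_1,M_2,\ldots$ of oracle Turing machines together with the time bound $q$. At stage $k$, we have committed to a finite prefix $r_1,\ldots,r_{L_k-1}$ of the continued fraction of $\theta$. We then pick a threshold precision $n_k$ (with $n_k\to\infty$) and an index $N_k>L_k$, and set all intermediate $r_j$ (for $L_k\le j<N_k$) to small values $\ge 2$ so that the partial Brjuno sum is stabilized. We now choose $r_{N_k}$ very large, inside a large interval $[A_k,B_k]$. Two features must hold:
\begin{itemize}
\item the convergents $p_{N_k-1}/q_{N_k-1}$ approximate $\theta$ with error $O(1/q_{N_k-1}^2)$, so that perturbing $r_{N_k}$ within $[A_k,B_k]$ changes $\theta$ only in binary digits that lie well past the first $q(n_k)$ bits, hence an oracle for $\theta$ queried by $M_k$ in time $\le q(n_k)$ cannot see the choice of $r_{N_k}$;
\item different choices of $r_{N_k}\in[A_k,B_k]$ change the term $\theta_1\cdots\theta_{N_k-1}\log(1/\theta_{N_k})$ of the Brjuno series by more than $2^{-n_k+3}$, and by Buff--Ch{\'e}ritat this forces $\log r(\theta)$ to change by more than $2^{-n_k+2}$ (once the tail and the $\upsilon$ error are controlled).
\end{itemize}
A counting argument then shows $M_k$, running in time $q(n_k)$, must output the same answer for at least two admissible choices of $r_{N_k}$, hence fails to produce $r(\theta)$ to precision $2^{-n_k}$ for at least one of them; we fix $r_{N_k}$ to be such a witness. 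Iterating defeats every fixed machine at some precision, and the sequence $(L_k,N_k,r_{N_k})$ is built effectively so $\theta$ itself is computable.

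The main obstacle is the quantitative control needed to turn Buff--Ch{\'e}ritat's qualitative continuity into the explicit bound on $\log r$-variation used in the second bullet. The Marmi--Moussa--Yoccoz conjecture that $\upsilon$ is H{\"o}lder-$1/2$ would make this immediate, but it remains open; one has to work instead with Yoccoz's explicit lower bound on $\upsilon$ and the tail estimates $\theta_1\cdots\theta_{j-1}=O(1/q_{j-1}^2)$ to show that, with $A_k,B_k$ chosen so $\log(B_k/A_k)\gg 2^{n_k}$, the contribution of the $N_k$-th term dominates both the residual tail and the $\upsilon$-error over the relevant interval. A secondary concern is keeping $\theta$ in the Brjuno class: the growth rate of $r_{N_k}$ must be calibrated (roughly at most a fixed tower of exponentials in $k$) so that the full Brjuno sum converges, guaranteeing the existence of the Siegel disk that makes \thmref{th-comp1} applicable. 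Once these quantitative estimates are in place, the diagonal construction and reduction to $J_\theta$ give the desired arbitrary lower bound on the computational complexity.
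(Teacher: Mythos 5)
The paper itself does not prove this theorem; it only states it and cites \cite{BBY2}, adding that ``given a lower complexity bound, such a $\theta$ can be produced constructively.'' So there is no in-text proof to compare against line by line. What can be said is that your strategy --- reduce the complexity of $J_\theta$ to that of the conformal radius $r(\theta)$, relate $r(\theta)$ to the Brjuno function $\Phi$, and diagonalize over continued-fraction expansions --- is exactly the strategy that the surrounding discussion in the paper (Theorem~\ref{th-comp1}, Theorem~\ref{phi-cont}, the Conditional Implication, Lemma~\ref{2main-equiv}) points towards, and is in fact the route taken in the cited literature. So at the level of ideas the proposal is on target.

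That said, several of the quantitative steps as written would not go through, and you should fix them before claiming a proof.
\begin{itemize}
\item \emph{The reduction.} Theorem~\ref{th-comp1} as stated in the paper is a computability equivalence, not a polynomial-time reduction. What you actually need is only the easy direction --- that a $2^{-n}$-picture of $J_\theta$ determines $r(\theta)$ up to roughly $2^{-n}$ with little extra work, because $\partial\Delta_\theta\subset J_\theta$ --- but even this deserves an explicit statement, and it is not simply ``the complexity-sensitive form'' of a cited theorem.
\item \emph{The continuity of $\upsilon$ is useless without a modulus.} Your second bullet asks to change $\Phi$ by about $2^{-n_k+3}$ and deduce that $\log r$ changes by about $2^{-n_k+2}$. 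This requires $|\upsilon(\theta)-\upsilon(\theta')|\ll 2^{-n_k}$, which from Buff--Ch\'eritat's bare continuity you cannot extract effectively (and using it non-effectively kills the ``$\theta$ is computable'' part, which you want). The correct remedy, which you gesture at but do not carry out, is to use Yoccoz's \emph{two-sided} bound $|\Phi(\theta)+\log r(\theta)|\le C$ with an explicit universal $C$. Then the $\Phi$-gap between admissible choices of $r_{N_k}$ must be pushed past $2C+O(1)$, i.e.\ a \emph{constant} rather than $2^{-n_k}$. This is a qualitatively different target from the one in your second bullet, and it changes the bookkeeping: the conclusion is then a \emph{multiplicative} separation $r(\theta)/r(\theta')\ge 2$, not an additive one of order $2^{-n_k}$.
\item \emph{Calibration of $n_k$ against the decay of $r$.} Because at each stage you push $\Phi$ up by another constant, $r(\theta)$ shrinks geometrically in $k$. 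The multiplicative gap $r(\theta)\ge 2r(\theta')$ only forces a difference of Julia sets larger than $2^{-n_k}$ if $r(\theta')\gtrsim 2^{-n_k}$. You must therefore choose $n_k$ to grow at least linearly in $k$ and track a lower bound for $r(\theta)$ along the construction (using the Yoccoz lower bound and an upper bound on the accumulated $\Phi$-prefix). This calibration is not mentioned in your proposal and is essential; without it the diagonal step does not produce two indistinguishable oracles whose Julia sets are genuinely $2^{-n_k}$-apart.
\item \emph{A small but real arithmetic slip.} The product $\theta_1\cdots\theta_{j-1}$ is comparable to $1/q_{j-1}$, not $1/q_{j-1}^2$ (one has $\theta_1\cdots\theta_{j-1}\in(1/(2q_{j-1}),1/q_{j-1})$). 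The quantity $\sim 1/q_{j-1}^2$ is the size of the interval of $\theta$'s compatible with the first $j-1$ partial quotients. Your two bullets need both quantities, in different roles, and the exponents propagate into the conditions you impose on $A_k,B_k$.
\item \emph{Brjuno summability.} Once the $\Phi$-gap target is a constant, the $N_k$-th contribution added at each stage is $\Theta(1)$, so $\Phi(\theta)=\sum_k\Theta(1)$ diverges unless you dampen it. The standard fix is to aim for a $\Phi$-gap of $2C+2^{-k}$ rather than $2C+\Theta(1)$ at stage $k$ --- this still dominates the $\upsilon$-error, keeps $\Phi(\theta)$ finite (hence $\theta$ Brjuno), and makes the per-stage increment summable. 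Your remark about ``a fixed tower of exponentials'' is heading in the wrong direction; what matters is summability of the per-stage contribution, not the magnitude of $r_{N_k}$.
\end{itemize}
In summary: the architecture (reduction to $r(\theta)$, control via $\Phi$ and Yoccoz's bounds, stage-by-stage diagonalization over continued fractions) is correct and matches the intent of the cited work, but the quantitative heart of the argument --- replacing the non-effective continuity of $\upsilon$ by the effective two-sided Yoccoz bound, changing the $\Phi$-gap target to a constant, and calibrating $n_k$ against the geometric decay of $r(\theta)$ while keeping $\Phi$ finite --- is left unresolved or mis-stated in the proposal and needs to be carried out carefully before the theorem is proved.
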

Given a lower complexity bound, such a $\theta$ can be produced constructively.

A major open question is the complexity of quadratic Julia sets with Cremer points. They are notoriously hard to draw in practice; no high-resolution pictures have been produced to this day -- and yet we do not know whether any of them are computably hard.

Let us further specialize to real quadratic family $f_c$, $c\in\RR$. In this case, it was recently proved by Dudko and Yampolsky \cite{DudYam2} that:
\begin{thm}
Almost every real quadratic Julia set is poly-time.

\end{thm}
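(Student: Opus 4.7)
The plan is to combine three ingredients: a measure-theoretic dichotomy that reduces the problem to two disjoint cases, the known poly-time algorithm for hyperbolic Julia sets, and a Yoccoz-puzzle based algorithm for the non-hyperbolic case that exploits Collet-Eckmann expansion along the critical orbit.

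First, I would invoke Lyubich's \emph{regular-or-stochastic} theorem for real quadratic polynomials: for Lebesgue almost every $c \in [-2, 1/4]$, either $f_c$ is hyperbolic, or $f_c$ satisfies the Collet-Eckmann condition, meaning that there exist constants $C > 0$ and $\Lambda > 1$ with $|(f_c^n)'(f_c(0))| \geq C\Lambda^n$ for all $n \geq 0$. In the hyperbolic case \thmref{hyperb-polytime} already gives a poly-time Julia set, so the task reduces to the Collet-Eckmann parameters. One also discards the measure-zero set of infinitely renormalizable real parameters (Lyubich), so that $f_c$ may be assumed to be at most finitely renormalizable.

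Second, for a finitely renormalizable Collet-Eckmann map, I would build a \emph{Distance-Estimator-style} algorithm based on the Yoccoz puzzle. The puzzle provides a dynamically defined partition of a neighborhood of $J_c$ into pieces of depth $k$, whose preimages under iteration refine the partition. Under Collet-Eckmann, the diameters of puzzle pieces at depth $k$ that avoid a small neighborhood of the critical point shrink at least like $\Lambda^{-k/2}$, while pieces meeting the critical orbit are controlled through the \emph{a priori} bounds (Kahn--Lyubich) at the last renormalization level. To decide the fate of a pixel of size $2^{-n}$, one localizes the pixel to a puzzle piece of depth $k \sim n/\log \Lambda$ by tracing its forward orbit and then determines whether this piece is separated from $J_c$ by a computable equipotential. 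The total cost is polynomial in $n$, mirroring Milnor's Distance Estimator, with Collet-Eckmann expansion substituting for uniform hyperbolicity.

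The main obstacle is making the whole scheme \emph{uniformly} poly-time on a set of parameters of full measure, rather than for each individual Collet-Eckmann parameter separately. This requires a quantitative version of the regular-or-stochastic dichotomy: for any $\varepsilon > 0$, on a set of parameters of measure $1-\varepsilon$ one must take the Collet-Eckmann constants $C,\Lambda$, the renormalization depth, and the moduli of the initial puzzle annuli all uniform, and moreover computable from $c$ with polynomial overhead in the precision $n$. Such quantitative estimates follow from the measure-theoretic machinery developed by Avila--Lyubich--de Melo around typical real quadratic dynamics, combined with Kahn--Lyubich \emph{a priori} bounds to handle finite renormalization uniformly. Assembling these ingredients into a single oracle Turing machine with oracle for $c$, and verifying that the exceptional set of parameters where the uniform bounds fail can be arranged to have measure zero, is the principal technical burden of the argument.
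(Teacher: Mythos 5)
The survey you are working from does not actually contain a proof of this theorem; it is cited from Dudko--Yampolsky~\cite{DudYam2}, and the only indication of their method is the remark that their main technical result concerns Collet--Eckmann parameters, which combined with the fact that Collet--Eckmann parameters have full measure among non-hyperbolic real quadratics gives the statement. Your decomposition into ``hyperbolic $\cup$ Collet--Eckmann, full measure'' therefore matches the structure of the actual argument. However, several of your ingredients are either misattributed or misdirected.

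First, the measure-theoretic input is not Lyubich's regular-or-stochastic theorem, which says a.e.\ real quadratic is hyperbolic \emph{or} admits an absolutely continuous invariant measure; stochasticity alone does not yield Collet--Eckmann. The statement you actually need --- that a.e.\ non-hyperbolic real quadratic satisfies Collet--Eckmann \emph{together with polynomial (slow) recurrence of the critical orbit} --- is due to Avila and Moreira, and the extra recurrence control is not a decorative refinement but the crux of the analytic step. In your Yoccoz-puzzle sketch you wave at ``pieces meeting the critical orbit'' being handled by Kahn--Lyubich a priori bounds, but those bounds are designed for infinitely renormalizable maps and are irrelevant here; once the map is finitely renormalizable and CE, what governs the diameters of the pieces that pass near the critical point is precisely the recurrence rate of the critical orbit. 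Without a quantitative recurrence hypothesis your depth-$k\sim n/\log\Lambda$ estimate does not close, because CE expansion along the critical orbit does not by itself control the distortion accumulated by deep returns of a puzzle piece to a neighborhood of $0$.

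Second, the ``uniformity'' obstacle you identify in the last paragraph is a red herring. The theorem is a statement about each individual parameter: for a.e.\ $c$, the set $J_c$ is poly-time computable by an oracle machine $M^\phi$ with oracle for $c$, and the polynomial (and all constants such as the CE constants, the renormalization depth, and the moduli) may freely depend on $c$. No uniform version over a positive-measure set of parameters is required, so the Avila--Lyubich--de~Melo parameter-space machinery you invoke to make constants uniform in $c$ is unnecessary for the stated result; it is needed only if one wants explicit constants or a uniform statement on a nearly full-measure set. Removing this spurious requirement simplifies your plan, but the genuine missing piece --- replacing ``Collet--Eckmann'' with ``Collet--Eckmann plus a recurrence condition,'' and using that recurrence to bound the distortion in the distance-estimator --- remains.
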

This means that poly-time computability is a ``physically natural'' property in real dynamics. Conjecturally, the main technical result of \cite{DudYam2} should imply the same statement for complex parameters $c$ as well, but the conjecture in question (Collet-Eckmann parameters form a set of full measure among non-hyperbolic parameters) while long-established, is stronger than Density of Hyperbolicity Conjecture, and is currently out of reach.

It is also worth mentioning in this regard that the most non-hyperbolic examples in real dynamics are infinitely renormalizable quadratic polynomials. The archetypic such example is the celebrated Feigenbaum polynomial. In a different paper, Dudko and Yampolsky \cite{DudYam1} showed:
\begin{thm}
The Feigenbaum Julia set is poly-time.
\end{thm}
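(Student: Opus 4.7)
The strategy is to exploit the approximate self-similarity of the Feigenbaum Julia set provided by the hyperbolicity of the period-doubling renormalization operator. Write $f = f_{c_*}$, where $c_*$ is the real Feigenbaum parameter, and recall that $f$ is a fixed point of the renormalization operator $\mathcal{R}$: there exists a holomorphic change of coordinates $\ell(z) = \lambda z + O(z^2)$, with $\lambda$ the Feigenbaum scaling constant, and a neighborhood $U$ of the critical point $0$ on which
\begin{equation*}
f^{2}\big|_U = \ell^{-1} \circ f \circ \ell.
\end{equation*}
Iterating, the dynamics of $f^{2^k}$ on $\ell^{k}(U)$ is conjugate, up to a rescaling by $\lambda^{-k}$ and an exponentially small (in $k$) correction, to the dynamics of $f$ itself on $U$. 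Since $\ell^{k}(U)$ shrinks geometrically to the post-critical Cantor set $\omega(0) \subset K_f$, this is the key self-similarity to be leveraged.

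The algorithm to decide whether a pixel $B$ of radius $2^{-n}$ intersects $J_f$ splits into two regimes. If $B$ lies at a scale where the orbit of its center escapes a bounded number of nested renormalization neighborhoods before entering a region on which some iterate $f^N$ is uniformly expanding, then a variant of Milnor's Distance Estimator applies and produces a poly-time decision, exactly as in the hyperbolic case underlying \thmref{hyperb-polytime}. If instead $B$ lies within a $\lambda^{-k}$-neighborhood of $\omega(0)$, we apply the renormalization relation $k \sim n/\log_2 \lambda$ times to replace the membership problem for $J_f$ at scale $2^{-n}$ by the same problem at a definite scale $O(1)$, which is poly-time trivially. Both the linearizer $\ell$ and the post-critical set can be computed to precision $2^{-p(n)}$ in poly-time once $c_*$ has been, since $c_*$ is the unique root of the expanding hyperbolic fixed point equation for $\mathcal{R}$.

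The main obstacle is making this rescaling step actually poly-time: naively, evaluating $f^{2^k}$ costs $2^k$ applications, exponential in $n$. The fix is to use the \emph{exponential contraction} of $\mathcal{R}$ at its fixed point (Sullivan--McMullen--Lyubich), promoted to an effective poly-time statement. McMullen's \emph{a priori} complex bounds give uniform moduli estimates on the renormalization neighborhoods, so that the $k$-th renormalized germ $\mathcal{R}^k f$ differs from $f$ by at most $C\rho^k$ with $\rho<1$ in a suitable Banach space of univalent maps. Consequently, for a $2^{-n}$-precision output one never needs to actually compose $2^{k}$ copies of $f$; it suffices to replace $f^{2^k}$ by $\ell^{-k} \circ f \circ \ell^{k}$ up to an error $C\rho^k \ll 2^{-n}$ when $k \gtrsim n$, and each factor of $\ell$ is a computable univalent germ approximable in poly-time from $c_*$.

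Putting it together, the hard work is to verify effective, explicit polynomial-time versions of the classical analytic ingredients: poly-time computability of $c_*$ and of the Feigenbaum scaling $\lambda$, poly-time approximability of the linearizer $\ell$, and a quantitative exponential-contraction estimate for $\mathcal{R}$ in a computable Banach slice of quadratic-like germs. Once these are in hand, the decision procedure outlined above evaluates $f_S(n, i/2^n, j/2^n)$ in time polynomial in $n$ — uniformly across all pixel locations — giving the claimed poly-time computability of $J_{c_*}$.
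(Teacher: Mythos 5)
The paper cites \cite{DudYam1} for this result without reproducing a proof, so I am comparing your sketch against that published argument.

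Your overall architecture --- self-similarity via renormalization near the post-critical Cantor set, a distance-estimator argument elsewhere, with exponential contraction of renormalization controlling the cost of the zoom --- is indeed the right one, and the closing paragraph correctly names where the substance lies: effective, polynomial-time versions of $c_*$, of the scaling constant, and of renormalization hyperbolicity.

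There is, however, an error right at the start that propagates. The Feigenbaum quadratic polynomial $f_{c_*}$ is \emph{not} a fixed point of period-doubling renormalization; no coordinate change $\ell$ satisfies $f^2|_U = \ell^{-1}\circ f\circ\ell$. The fixed point is the Cvitanovi\'c--Feigenbaum function $F$, a transcendental quadratic-like germ, and $f_{c_*}$ merely lies on its stable manifold, so $\mathcal{R}^k f_{c_*}\to F$ exponentially fast. Your later estimate that ``$\mathcal{R}^k f$ differs from $f$ by at most $C\rho^k$'' should therefore read ``differs from $F$''. This is not cosmetic: the universal shape to which the rescaled small Julia sets converge is $J(F)$, a separate object whose computability must be established on its own, and the algorithm must track the exponentially small discrepancy between the $k$-th renormalized polynomial germ and $F$ at each depth.

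The deeper gap is the reassembly of the full Julia set. You split into two regimes, escaping and near-post-critical, but $J(f_{c_*})$ consists of the nested small Julia sets together with all of their iterated $f$-preimages, spread across every scale. A pixel far from the post-critical set can still meet preimages of deep small copies, and the naive distance estimator does not apply because $f_{c_*}$ is not hyperbolic. Handling the intermediate regime requires explicit \emph{a priori} bounds --- computable lower bounds on the moduli of the fundamental annuli and on the expansion of an induced return map away from the post-critical set --- and in \cite{DudYam1} these are obtained by making the Sullivan--McMullen--Lyubich bounds quantitative and machine-verifiable, which is the bulk of the work. Your sketch defers exactly this to ``the hard work,'' which is honest, but it means the proposal is a correct strategic outline rather than a proof; without those effective bounds and the preimage bookkeeping, the argument does not close.
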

The above theorems raise a natural question whether {\it all} real quadratic Julia sets are poly-time (the examples of \cite{BBY2} cannot have real values of $c$). That, however unlikely it appears, is unknown at present.

\subsection{Computing Julia sets in statistical terms}
As we have seen above, there are instances when for a rational map $R$, $d=\deg R\geq 2$, the set of limit points of the sequence of inverse images $R^{-n}(z)$
cannot be accurately simulated on a computer even in the ``tame'' case when $R\equiv f_c$ with a computable value of $c$.
However, even in these cases we can ask whether the limiting {\it statistical distribution} of the points $R^{-n}(z)$ can be computed. As we
noted above, for all $z\in\riem$ except at most two, and every continuous test function $\psi$, the averages
$$\frac{1}{d^n}\sum_{w\in R^{-n}(z)} \psi(w)\underset{n\to\infty}{\longrightarrow}\int \psi d\lambda,$$
where $\lambda$ is the Brolin-Lyubich probability measure \cite{Brolin,Lyubich} with $\text{supp}(\lambda)=J(R)$.
We can thus ask whether the value of the integral on the right-hand side can be algorithmically computed with an arbitrary precision.
Even if 
$J(R)=\supp(\lambda)$ is not a computable set,  the answer does not {\it a priori} have to be negative. Informally speaking,
a positive answer would imply a dramatic
difference between the rates of convergence in the following two limits:
$$\lim R^{-n}(z)\underset{\text{\small Hausdorff}}{\longrightarrow} J(R)\text{ and }\lim \frac{1}{d^{n}}\sum_{w\in R^{-n}(z)} \delta_w
\underset{\text{\small weak}}{\longrightarrow} \lambda.$$

Indeed, as was shown in \cite{BBRY}:

\begin{thm}
  \label{BL-computable}
 The Brolin-Lyubich measure of $R$ is always computable by a TM with an oracle for the coefficients of $R$.
\end{thm}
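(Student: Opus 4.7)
The plan is to approximate $\lambda$ effectively in the Wasserstein-Kantorovich metric by a computable sequence of measures built from the dynamics of $R$, exploiting the fact that $\lambda$ admits a potential-theoretic description that converts weak convergence of measures into uniform convergence of subharmonic potentials, which in turn is quantifiable.

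The key observation is the classical Brolin--Lyubich construction: for any probability measure $\omega$ on $\riem$ that gives no mass to the (at most two) exceptional points of $R$, the normalised pull-backs
$$\omega_n\;:=\;\frac{1}{d^n}(R^n)^*\omega$$
converge weakly to $\lambda$. I would take $\omega$ to be the normalised Fubini--Study area measure on $\riem$ (or Lebesgue measure on a large computable spherical disc); its density is a fixed computable function, and since the Jacobian of $R^n$ is a rational function in $z$ whose coefficients are computable uniformly from the coefficients of $R$, each $\omega_n$ has a density which is uniformly computable in $n$. Thus the measures $\omega_n$ are uniformly computable, and what remains is to extract from the convergence $\omega_n\to\lambda$ a computable modulus.

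To get an effective rate, I would pass from measures to potentials. Write $\omega=c\,\Delta u_\omega$ where $u_\omega$ is a computable smooth potential, and recall that $\lambda=c\,\Delta G_R$ where $G_R$ is the dynamical Green potential of $R$, normalised so that $G_R(z)=\tfrac{1}{d^n}\log\|R^n(z)\|+o(1)$ in homogeneous coordinates on $\riem$. The truncations $G_n(z):=\tfrac{1}{d^n}\log\|R^n(z)\|$ are computable from an oracle for the coefficients of $R$, and a standard telescoping
$$G_{n+1}(z)-G_n(z)\;=\;\frac{1}{d^{n+1}}\log\frac{\|R(R^n(z))\|}{\|R^n(z)\|^d}$$
shows that $\|G_n-G_R\|_\infty=O(M\,d^{-n})$ uniformly on $\riem$, with a constant $M$ explicitly computable from bounds on the coefficients of $R$. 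Then $\tfrac{1}{d^n}u_\omega\circ R^n - G_R\to 0$ at the same effective rate on compact sets avoiding the exceptional set, and by Stokes/integration by parts against any Lipschitz test function $\psi$ approximated in $C^2$ by smooth functions,
$$\int \psi\, d\omega_n-\int \psi\, d\lambda\;=\;c\!\int \Delta\psi\cdot\bigl(\tfrac{1}{d^n}u_\omega\circ R^n-G_R\bigr)\,dA$$
decays like $O(d^{-n})$ with a computable constant depending only on $\mathrm{Lip}(\psi)$ and the diameter. This gives a computable modulus of weak convergence of $\omega_n$ to $\lambda$, which, combined with the computability of the $\omega_n$, yields computability of $\lambda$ as a point in $(\M(\riem),W_1,\cD)$ via the characterisation of \propref{comp-supp} and its surrounding propositions.

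The main obstacle is the final step: turning uniform convergence of potentials into a quantitative $W_1$-bound on the measures, uniformly in the coefficients of $R$. Near the exceptional set (and near points where critical orbits accumulate on $J(R)$) the potentials $\tfrac{1}{d^n}u_\omega\circ R^n$ are not smooth, so the integration by parts must be carried out with care—either by first mollifying $\psi$ at a computable scale, by replacing $\omega$ with an average over several smooth measures whose joint potential dominates the exceptional behaviour, or by directly exploiting the fact that $W_1$ is controlled by the $C^1$-norm of the potential difference plus a small boundary correction. Once this quantitative conversion is carried out, the algorithm is explicit: on input $n$, choose $k=k(n)$ with $M\,d^{-k}<2^{-n}$, compute the density of $\omega_k$ to sufficient precision, and discretise $\omega_k$ by a finite convex rational combination of Dirac masses at ideal points to obtain a $2^{-n}$-approximation of $\lambda$ in $\cD$.
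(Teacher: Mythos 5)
Your route---effectivising the convergence $\frac{1}{d^n}(R^n)^*\omega\to\lambda$ via uniform convergence of the dynamical Green potential---is sound in outline, but it is not the argument behind Theorem~\ref{BL-computable} as presented here. For polynomials the survey derives that theorem from Theorem~\ref{comput-filled} together with Theorem~\ref{unif-perf} (computability of the harmonic measure of the complement of a computable, uniformly perfect compact set), with no rate at all; your explicit geometric estimate on the potentials is instead the mechanism behind the quantitative, non-uniform Theorem~\ref{BL-comp-2}, which rests on the Dinh--Sibony equidistribution bounds. Once completed, your argument buys a rate that the paper's proof of Theorem~\ref{BL-computable} does not provide.

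The gap you flag is genuine in the write-up but fillable by exactly the mollification you hint at. With $u=G_n-G_R$ continuous on $\riem$ and $\lambda-\omega_n=c\,dd^c u$, mollify a $1$-Lipschitz test function $\psi$ at scale $\eps$ to get $\psi_\eps$ with $\|\psi-\psi_\eps\|_\infty\le\eps$ and $\|dd^c\psi_\eps\|_{L^1}\le C/\eps$; then
\[
\Bigl|\int\psi\,d(\lambda-\omega_n)\Bigr|\;\le\;\Bigl|\int u\,dd^c\psi_\eps\Bigr|+2\eps\;\le\;\frac{C\|u\|_\infty}{\eps}+2\eps,
\]
and optimising $\eps\asymp\sqrt{\|u\|_\infty}$ yields $W_1(\lambda,\omega_n)=O(\sqrt{M}\,d^{-n/2})$. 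One further imprecision: the constant $M\asymp\sup_{\|w\|=1}\bigl|\log\bigl(\|R(w)\|/\|w\|^d\bigr)\bigr|$ is \emph{not} computable from upper bounds on the coefficients alone; the required lower bound on $\|R(w)\|$ is governed by the resultant of the two homogeneous components of $R$, which is a polynomial in the coefficients and is nonzero because $\deg R=d$, so a positive rational lower bound can be extracted from the oracle by an unbounded search. This makes your algorithm uniform in $R$, as Theorem~\ref{BL-computable} requires, but precludes any oracle-independent running-time bound---which is precisely the tension the survey notes between Theorems~\ref{BL-computable} and~\ref{BL-comp-2}.
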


\noindent
Even more surprisingly, the result of Theorem~\ref{BL-computable} is {\it uniform}, in the sense that there is a single algorithm that takes the rational map $R$ as a parameter
and computes the corresponding Brolin-Lyubich measure.
Using the analytic tools given by the work of Dinh and Sibony \cite{DinhSib}, the authors of \cite{BBRY} also got the following complexity bound:

\begin{thm}
\label{BL-comp-2}
  For each rational map $R$, there is an algorithm $\cA(R)$
  that computes the Brolin-Lyubich measure in exponential time.
  \end{thm}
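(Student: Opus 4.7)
The plan is to approximate $\lambda$ by an empirical measure of the form $\mu_N(z_0)=d^{-N}\sum_{w\in R^{-N}(z_0)}\delta_w$ for a carefully chosen non-exceptional starting point $z_0$, and to argue that $N$ can be taken of size $O(n)$ while the $d^N$ preimages can be computed numerically in time $\exp(O(n))$. Since $\cA$ is allowed to depend on the rational map $R$, the algorithm may treat its coefficients as hard-wired; it first locates the exceptional set of $R$ (at most two points), picks a computable base point $z_0\in\riem$ at positive distance from it and from the post-critical set, and then enters the main loop on the precision parameter $n$.

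The crucial analytic input, which is the reason the paper cites \cite{DinhSib}, is a \emph{quantitative} equidistribution statement: there exist constants $C=C(R,z_0)>0$ and $\alpha=\alpha(R)\in(0,1)$ such that
\[
W_1\!\bigl(\mu_N(z_0),\lambda\bigr)\;\le\;C\,\alpha^{N}.
\]
The Brolin--Lyubich theorem by itself only gives weak-star convergence with no rate, so it would be insufficient; the pluripotential-theoretic estimates of Dinh and Sibony, in particular exponential decay of correlations for $\lambda$ and for smooth approximants of $(R^N)^*\delta_{z_0}$, upgrade this to the displayed exponential rate in the Wasserstein--Kantorovich metric. Setting $N:=\bigl\lceil(n+\log_{2}(4C))/\log_{2}(1/\alpha)\bigr\rceil=O(n)$ then forces $W_1(\mu_N(z_0),\lambda)<2^{-n-1}$.

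The computational step is to solve $R^{N}(w)=z_0$ numerically. The polynomial (or homogeneous) form of $R^{N}$ is obtained from $R$ by $N$ iterated compositions carried out in interval arithmetic at working precision $2^{-\Theta(n)}$; the resulting equation has degree $d^{N}=\exp(O(n))$. Running a polynomial root-finder such as Pan's algorithm on it yields rational approximations to all $d^{N}$ roots with additive precision $2^{-n-1}$ in time $\mathrm{poly}(d^{N},n)=\exp(O(n))$. Writing $\mu_N^{\mathrm{num}}$ for the uniform atomic measure on these rational approximations, the obvious transport plan (match each true preimage to its own computed approximation) shows $W_1(\mu_N^{\mathrm{num}},\mu_N(z_0))\le 2^{-n-1}$, so $W_1(\mu_N^{\mathrm{num}},\lambda)<2^{-n}$. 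By construction, $\mu_N^{\mathrm{num}}$ is a rational convex combination of Diracs at rational points of $\riem$, hence an ideal point of the computable metric space $(\cM(\riem),W_1,\cD)$, which is exactly what a $2^{-n}$-approximation to a computable measure requires.

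The main obstacle is the exponential equidistribution rate: once it is invoked, everything else is a combination of a standard complexity bound for polynomial root-finding with the trivial optimal-transport estimate for empirical measures on the same number of atoms. The secondary difficulty is ill-conditioning of the equation $R^{N}(w)=z_0$ near critical values of $R^{N}$, where small perturbations of coefficients can produce large displacements of roots. This is handled by choosing $z_0$ outside the forward orbit of the critical set of $R$ with a fixed positive distance, so that the minimum separation of roots of $R^{N}(w)=z_0$ decays at worst exponentially in $N=O(n)$; inflating the working precision by the same exponential factor keeps the whole computation within the exponential time budget.
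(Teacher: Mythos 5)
Your proposal follows the same route as the paper's: invoke the Dinh--Sibony exponential equidistribution rate to obtain $W_1(\mu_N(z_0),\lambda)\le C\alpha^{N}$, take $N=O(n)$, numerically locate the $d^{N}=\exp(O(n))$ preimages of $z_0$, and bound the Wasserstein error of the resulting atomic measure by the trivial matching transport plan. One point needs correcting, though it does not derail the argument: you require a base point $z_0$ ``at positive distance from \ldots the post-critical set,'' but the post-critical set can be dense in $\riem$ (Latt\`es maps, or indeed any rational map with $J(R)=\riem$), so such a choice is not always available. It is also unnecessary, because the exponential time budget absorbs the ill-conditioning outright: the multiplicity of any root of $R^{N}(w)=z_0$ is at most $d^{N}=\exp(O(n))$, so the worst-case precision penalty for near-multiple roots is $\exp(O(n))$ extra bits, and both the $N$-fold composition of $R$ and the degree-$d^{N}$ root-finding stay within $\exp(O(n))$ time at that precision. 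The only hypothesis on $z_0$ that the Dinh--Sibony estimate actually uses, and the only one you should keep, is that $z_0$ avoid the (finite, computable) exceptional set.
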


\noindent
The running time of $\cA(R)$ will be of the form $\exp (c(R)\cdot n)$, where $n$ is 
the precision parameter, and $c(R)$ is a constant that depends only on the map $R$ (but
not on $n$). Theorems~\ref{BL-computable} and \ref{BL-comp-2}  are not directly comparable, since the latter bounds the growth
of the computation's running time in terms of the precision parameter, while  the former
gives a single algorithm that works for all rational functions $R$.

As was pointed out above, 
the Brolin-Lyubich measure for a polynomial coincides with the harmonic measure of the complement of the filled Julia set. In view of Theorem~\ref{comput-filled},
it is natural to ask what property of a computable 
compact set in the plane ensures computability of the harmonic measure of the complement. We recall that a compact set $K\subset\riem$ which contains at least two points is {\it uniformly perfect} if the moduli of the ring domains 
separating $K$ are bounded from above. Equivalently, there exists some $C>0$ such that for any 
$x\in K$ and $r>0$, we have 
$$\left(B(x,Cr)\setminus B(x,r)\right)\cap K=\emptyset\implies K\subset B(x,r).$$ In particular, every connected set is uniformly perfect. 
As was shown in \cite{BBRY}:

\begin{thm}
  \label{unif-perf}
 If a closed set $K\subset \CC$ is computable and uniformly perfect, and has a connected complement, then the harmonic measure of the complement is 
computable.
\end{thm}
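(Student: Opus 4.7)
The plan is to realize $\omega$ as the pushforward of normalized arclength on $\uc$ under the boundary behavior of the Riemann map of $\Omega=\hat\CC\setminus K$, and to use uniform perfectness of $K$ only in order to obtain effective convergence rates. Since $K$ has connected complement, $\Omega$ is a simply connected neighborhood of $\infty$ in $\hat\CC$. Computability of $K$ supplies oracles for $\Omega$ as a lower-computable open set, for $\partial K$ as a lower-computable closed set, and for the base point $\infty$. By the spherical version of \thmref{comp-riem-map}, the Riemann map $f:\DD\to\Omega$ with $f(0)=\infty$ is computable, and by \thmref{main2} so is its Carath\'eodory extension $\hat f:\overline\DD\to\widehat\Omega$.

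The classical relation between harmonic measure and Riemann mapping gives, for every continuous $g:\hat\CC\to\RR$,
\begin{equation*}
\int_K g\,d\omega=\lim_{r\to 1^-}I_r(g),\qquad I_r(g):=\frac{1}{2\pi}\int_0^{2\pi} g(f(re^{i\theta}))\,d\theta;
\end{equation*}
indeed, if $u$ denotes the Dirichlet solution on $\Omega$ with boundary data $g|_K$, then $u\circ f$ is bounded harmonic on $\DD$ and $u(f(0))=u(\infty)=\int g\,d\omega$ equals the mean of $u\circ f$ on every circle $\{|z|=r\}$. For each $r<1$ the integrand is a uniformly computable continuous function of $\theta$, so $I_r(g)$ is uniformly computable from $r$, $f$ and $g$. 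To conclude that $\omega$ is a computable point of $\M(K)$, it therefore suffices to produce a computable modulus of convergence $I_r(g)\to \int g\,d\omega$ as $r\to 1^-$.

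This is precisely where uniform perfectness is used. A classical consequence of the Beurling projection theorem combined with the Koebe $1/4$-theorem is that, for uniformly perfect $K$, the Riemann map satisfies a H\"older boundary estimate of the form
\begin{equation*}
\dist(f(re^{i\theta}),K)\le C(1-r)^\alpha,\qquad \theta\in[0,2\pi),
\end{equation*}
with $\alpha>0$ and $C>0$ depending only on the uniform perfectness constant of $K$. Composed with the computable modulus of continuity of the test function $g$, this yields an effective bound on $|I_r(g)-\int g\,d\omega|$ in terms of $1-r$. The constants $C$ and $\alpha$ are not required as explicit input: one may enumerate rational candidates, run the algorithm for each, and cross-check outputs, since any correct candidate yields a correct algorithm and produces outputs consistent with all other correct candidates in the limit.

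The main obstacle is precisely the H\"older boundary estimate above. Its qualitative form is classical in Pommerenke's theory and stems from the fact that the hyperbolic density of a uniformly perfect domain is comparable to the reciprocal of the Euclidean distance to the boundary; but extracting an effective, uniform-in-$\theta$ rate with explicit dependence on the uniform perfectness constant alone requires some care. Once this estimate is in hand, the computability of $\omega$ follows by assembling the algorithmic pieces developed above.
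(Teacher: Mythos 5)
Your overall strategy---realize $\omega$ as the radial pushforward of normalized arclength under the Riemann map $f$ and approximate $\int g\,d\omega$ by the circle averages $I_r(g)$---is plausible, and the preliminary reductions are essentially sound. In particular, lower-computability of $\partial K$ (which you assert without comment) does follow from computability of $K$: an ideal ball $B$ is connected, so $B\cap K\neq\emptyset$ and $B\cap(\hat\CC\setminus K)\neq\emptyset$ together force $B\cap\partial K\neq\emptyset$, and both conditions are semi-decidable; hence Hertling's \thmref{comp-riem-map} makes $f$ computable. Your appeal to \thmref{main2} for the Carath\'eodory extension is, however, both unnecessary (you only evaluate $f$ on interior circles $\{|z|=r\}$) and unjustified: that theorem requires $\partial W$ to be computable as a compact set, i.e.\ upper-computable as well as lower-computable, and upper computability of $\partial K$ is not supplied by computability of $K$.

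The substantive gap is in how the H\"older boundary estimate is converted into a modulus of convergence. Writing $u$ for the Perron solution in $\Omega$ with boundary data $g$, the mean-value property gives $\int g\,d\omega=\frac{1}{2\pi}\int_0^{2\pi}u(f(re^{i\theta}))\,d\theta$ for every $r<1$, so
\begin{equation*}
\Bigl|I_r(g)-\int g\,d\omega\Bigr|\ \le\ \sup_{|z|=r}\bigl|g(f(z))-u(f(z))\bigr|.
\end{equation*}
The estimate $\dist(f(re^{i\theta}),K)\le C(1-r)^\alpha$, combined with the modulus of continuity $m_g$ of $g$, controls $|g(f(z))-g(w^*)|$ for a nearest boundary point $w^*$; it says nothing by itself about $|u(f(z))-g(w^*)|=|u(f(z))-u(w^*)|$. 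Uniform perfectness of $\partial\Omega$ does not make $\Omega$ a H\"older domain, so one has no modulus of continuity for $u$ up to the boundary and the composition ``boundary decay of $f$ $\circ$ modulus of $g$'' does not bound $g-u$. What is actually needed is a Beurling-type concentration estimate for the harmonic measure $\omega_w$ at an interior point $w$ with $\dist(w,\partial\Omega)=d$, of the form $\omega_w\bigl(\partial\Omega\setminus B(w,\rho)\bigr)\le C'(d/\rho)^\beta$ with $\beta>0$ and $C'$ depending only on the uniform-perfectness constant; this gives $|u(w)-g(w)|\le m_g(\rho)+2\|g\|_\infty\,C'(d/\rho)^\beta$, and optimizing over $\rho$ closes the argument. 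You invoke the Beurling projection theorem, but only to obtain the geometric decay $\dist(f(re^{i\theta}),K)\le C(1-r)^\alpha$; the application of Beurling to harmonic measure itself---which is the step that actually produces a computable modulus of convergence---is absent. The gap is repairable with the displayed estimate, but as written the proof does not go through.
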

It is well-known \cite{MR92} that filled Julia sets are uniformly perfect. Theorem~\ref{unif-perf} thus implies Theorem~\ref{BL-computable} in the polynomial case.
Computability of the set $K$ is not enough to ensure computability of the harmonic measure:
in \cite{BBRY} the authors presented a counter-example of a computable closed
set with a non-computable harmonic measure of the complement.

\subsection{Applications of computable Carath{\'e}odory Theory to Julia sets: External rays and their impressions}

Informally (see \cite{BY-book} and \cite{BBRY} for a more detailed discussion),
the parts of the Julia set $J_c$ which are hard to compute are ``inward pointing'' decorations, forming narrow fjords of $K_c$.
If the fjords are narrow enough, they will not appear in a finite-resolution image of $K_c$, which explains how the former can
be computable even when $J_c$ is not. Furthermore, a very small portion of the harmonic measure resides in the fjords, again
explaining why it is always possible to compute the harmonic measure.

Suppose the Julia set $J_c$ is connected, and denote 
$$\phi_c:\hat\CC\setminus \overline{\DD}\to\hat\CC\setminus K_c$$
 the unique conformal mapping satisfying the normalization $\phi_c(\infty)=\infty$ and $\phi_c'(\infty)=1$.
Carath{\'e}odory Theory (see e.g. \cite{Mil} for an exposition) implies that $\phi_c$ extends continuously to map 
the unit circle $\uc$ onto the {\it Carath{\'e}odory completion} $\hat J_c$ of the Julia set. 
An element of the set $\hat J_c$ is a {\it prime end} $p$ of $\CC\setminus K_c$. The impression $\imp(p)$ of a prime end 
is a subset of $J_c$ which should roughly be thought as a part of $K_c$ accessible by a particular approach from the exterior.
The harmonic measure $\omega_c$ can be viewed as the pushforward of the Lebesgue measure on $\uc$ onto the set of prime end
impressions.

In view of the above quoted results, from the point of view of computability, prime end impressions should be seen as borderline
objects. On the one hand, they are subsets of the Julia set, which may be non-computable, on the other they are ``visible from
infinity'', and as we have seen accessibility from infinity generally implies computability.

It is thus natural to ask whether 
 the impression of a prime end of $\hat\CC\setminus K_c$ always computable by a TM with an oracle for $c$.
To formalize the above question, we need to describe a way of specifying a prime end. We recall that {\it the external ray}
$R_\alpha$ of angle $\alpha\in \RR/\ZZ$ is the image under $\phi_c$ of the radial line $\{re^{2\pi i\alpha}\;:\;r>1\}$.
The curve $$R_\alpha=\phi_c(\{re^{2\pi i\alpha}\;:\;r>1\})$$
lies in $\hat\CC\setminus K_c$. The {\it principal impression of an external ray} $\pimp(R_\alpha)$ is the set of limit points of $\phi_c(re^{2\pi i\alpha})$ as $r\to 1$.
If the principal impression of $R_\alpha$ is a single point $z$, we say that $R_\alpha$ {\it lands} at $z$. External rays play a very important role in the study of polynomial dynamics.
 
It is evident that every principal impression is contained in the impression of a unique prime end. We call the impression of this prime end the 
{\it prime end impression of an external ray} and denote it
$\imp(R_\alpha)$. A natural refinement of the first question is the following:
 suppose $\alpha$ is a computable angle;  
is the prime end impression $\imp(R_\alpha)$ computable?
As was shown in \cite{binder2015non}, the answer is emphatically negative:

\begin{thm}
  There exists a computable complex parameter $c$ and a computable Cantor set of angles 
$C\subset \uc$ such that for every angle $\alpha \in C$, the impression $\imp(R_{\alpha})\subset J_c$ is not computable.
Moreover, any compact subset $K\Subset J_c$ which contains $\imp(R_\alpha)$ is non-computable.
\end{thm}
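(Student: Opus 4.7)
The plan is to leverage \thmref{noncomput-Julia}: there exists a computable Brjuno angle $\theta$ for which $J_\theta$ is not computable, which by \thmref{th-comp1} is equivalent to non-computability of the conformal radius $r(\theta)$ of the Siegel disk $\Delta$ of $P_\theta(z)=e^{2\pi i\theta}z+z^2$. After an affine rescaling to the standard form $f_c(z)=z^2+c$, we take this as our computable parameter $c$. The target is then to exhibit a Cantor set $C\subset\uc$ of uniformly computable external angles such that the prime end impression of $R_\alpha$ for each $\alpha\in C$ carries enough information about $\partial\Delta$ that computing any compact subset of $J_c$ containing $\imp(R_\alpha)$ would let us recover $r(\theta)$, a contradiction.

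For the construction of $C$, the idea is to exploit that $P_\theta$ acts on external angles by the doubling map $\alpha\mapsto 2\alpha$. Standard Siegel combinatorics of quadratic polynomials (the critical orbit accumulates on $\partial\Delta$) provide at least one angle $\alpha_0$ such that $\imp(R_{\alpha_0})$ meets $\partial\Delta$. Taking iterated preimages of $\alpha_0$ under doubling with a prescribed symbolic itinerary then produces a Cantor set of angles whose ray impressions are dynamically related to a point on $\partial\Delta$. The Cantor-set structure comes from the branching of the preimage tree, while computability of each individual angle follows from computability of $\alpha_0$ and the elementary computability of the binary preimage branches; uniformity is preserved since each $\alpha\in C$ is specified by its binary itinerary relative to $\alpha_0$.

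The reduction to non-computability of $r(\theta)$ then runs as follows. Assume for contradiction that some compact $K\Subset J_c$ with $K\supset\imp(R_\alpha)$ is computable with an oracle for $c$. Because $c$ is computable, the linearizing coordinate at $0$ can be constructed by the standard Schr\"oder-type iteration, so the Siegel disk $\Delta$ is lower-computable from inside: one obtains an exhaustion of $\Delta$ by a nested sequence of computable disks $D_n\subset\Delta$. On the other hand, $K$, being computable, provides an upper bound on the distance from any point of $\imp(R_\alpha)$ to $\partial\Delta$. Pulling $K$ back under a suitable iterate of $f_c$ (using the dynamical relation $f_c(R_\alpha)=R_{2\alpha}$ and the chosen itinerary of $\alpha$) one transfers this localization to a point close to the critical value on $\partial\Delta$. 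Combining the lower bound on $\Delta$ from the linearizer with the upper bound from $K$ gives a computable approximation to $r(\theta)$, contradicting \thmref{th-comp1}.

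The main obstacle, and the technical heart of the argument, is the ``moreover'' clause: one must ensure that \emph{any} compact superset of $\imp(R_\alpha)$ inside $J_c$ — not just $\imp(R_\alpha)$ itself — already reveals the position of $\partial\Delta$ to scale $\diam K$. This requires delicate control of the geometry of $J_c$ near $\partial\Delta$ in the Brjuno but non-computable regime: one has to rule out the possibility that $K$ ``fills in'' the narrow fjords that make $J_c$ non-computable while staying away from $\partial\Delta$. The key point should be that in $J_c$ any continuum connecting a point of $\imp(R_\alpha)$ to any other portion of $J_c$ must cross $\partial\Delta$ at a controlled location, so that $K\subset J_c$ cannot avoid pinpointing it. This combinatorial-topological rigidity of Siegel quadratics, together with the computability of the combinatorial data supplied by $\theta$ and $\alpha$, is what drives the proof.
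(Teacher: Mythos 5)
Your overall strategy — specialize to a computable Brjuno angle $\theta$ with non‑computable Siegel conformal radius $r(\theta)$, and reduce computability of a compact $K\supset\imp(R_\alpha)$ to computability of $r(\theta)$ — is the correct one, and is indeed the skeleton of the argument in \cite{binder2015non}. But the proposal is missing the crucial topological input, and as a result the reduction as you have written it does not close.

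The weak statement you invoke, that there is an angle $\alpha_0$ with $\imp(R_{\alpha_0})\cap\partial\Delta\neq\emptyset$, is not enough. If the impression only \emph{meets} $\partial\Delta$ (say, in a single non‑computable point $z$), a computable compact $K\subset J_c$ containing that point gives essentially no handle on $r(\theta)$: knowing a computable $K\ni z$ tells you nothing about $z$ itself, and even less about the whole boundary $\partial\Delta$, which is what governs the conformal radius. Your remark that ``$K$, being computable, provides an upper bound on the distance from any point of $\imp(R_\alpha)$ to $\partial\Delta$'' does not make sense here, since $\imp(R_\alpha)$ is already a subset of $J_c$ and the proposal has established no geometric relation between it and $\partial\Delta$ beyond a non‑empty intersection. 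The preimage construction does not repair this: pulling back $\alpha_0$ under doubling produces angles whose impressions relate to $f_c^{-n}(\partial\Delta)$, which is $\partial\Delta$ together with its preimages, so there is no reason the preimage angles you select would see $\partial\Delta$ itself rather than some faraway preimage component.

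What the actual proof relies on is the much stronger structural fact, going back to work of Rogers and Childers on prime ends of Siegel disks, that for such a quadratic the boundary $\partial\Delta$ is entirely contained in the impression $\imp(R_\alpha)$ for every $\alpha$ in a certain combinatorially defined (and computable) Cantor set $C$ of angles. Once you know $\partial\Delta\subseteq\imp(R_\alpha)\subseteq K$, the ``moreover'' clause is not the technical heart of the argument — it is essentially the easy part. Indeed, $\Delta$ is in the Fatou set while $K\subset J_c$, so $\Delta\subset\CC\setminus K$; and since $\partial\Delta\subset K$, the component of $\CC\setminus K$ containing $0$ is exactly $\Delta$. From a computable $K$ one then recovers $\Delta$ as a computable domain (approximated both from inside and, via the computable cover of $K$, from outside), and hence $r(\Delta,0)=r(\theta)$ — contradicting non‑computability of $r(\theta)$. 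So the genuine technical work lies in establishing the inclusion $\partial\Delta\subseteq\imp(R_\alpha)$ for the Cantor set of angles $C$, and your proposal has replaced this with a hand‑waving appeal to ``combinatorial‑topological rigidity'' without identifying the actual theorem that is needed. Without it, the reduction to Theorem~\ref{th-comp1} is incomplete.
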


This statement illustrates yet again how subtle, and frequently counter-intuitive, the answers to natural computability questions may be when it comes to Julia sets, and, by extensions, to other fractal invariant sets in low-dimensional dynamics.

\subsection{On the computability of the Mandelbrot set}

Let us recall that the Mandelbrot set $\M$ is defined to be the \emph{connectedness locus} of the family $f_{c}(z)=z^{2}+c, \,\, c\in \CC$:
the set of complex parameters $c$ for which the Julia set $J(f_{c})$ is connected. The boundary of $\M$ corresponds to the parameters near which the geometry of the Julia set undergoes a dramatic change. For this reason, $\partial \M$ is referred to as the \emph{bifurcation locus}.  As already discussed in Section \ref{juliasets}, $J(f_{c})$ is connected precisely when the critical point $0$ does not escape to infinity. Therefore, $\M$ can be  equivalently defined as
$$
\M = \{ c \in \CC :  \sup_{n\in\NN}|f^{n}(0)| < \infty \}.
$$
$\M$ is widely known for the spectacular beauty of its fractal structure, and an enormous amount of effort has been made in order to understand its topological and geometrical properties.   It is easy to see that $\M$ is a compact set, equal to the closure of its interior, and contained in the disk of radius $2$ centered at the origin. Douady and Hubbard have shown \cite{douady1982iteration} that $\M$ is connected and simply connected.  In this section we will discuss the computability properties of $\M$, a question raised by Penrose in \cite{penrose1991emperor}.  Hertling showed in \cite{HertMandel}:

\begin{thm} The complement  of the Mandelbrot set is a lower-computable open set, and its boundary $\partial \M$ is a lower computable closed set. 
\end{thm}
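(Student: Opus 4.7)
The plan is to handle the two assertions separately. For $\CC\setminus\M$, I exhibit it as a countable union of uniformly lower-computable open sets via the escape criterion. For $\partial\M$, I invoke \propref{p.comp-closed} and construct a uniformly computable dense sequence of points in $\partial\M$ from parabolic parameters (roots of hyperbolic components).

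\emph{Step 1 (lower-computability of $\CC\setminus\M$).} Set $P_n(c)=f_c^n(0)$; these are polynomials in $c$ with integer coefficients computed inductively from $P_0=0$ and $P_{n+1}=P_n^2+c$, so $P_n$ is uniformly computable in $n$. The elementary escape criterion yields
$$\CC\setminus\M=\bigcup_{n\geq 0}V_n,\qquad V_n:=\{c\in\CC:|P_n(c)|>2\}.$$
Each $V_n$ is the preimage of the lower-computable open set $\{|w|>2\}$ under the computable polynomial $P_n$. Uniform lower-computability of $V_n$ in $n$ is obtained by enumerating those ideal balls $B(s,r)$ on which an effective Cauchy-type derivative bound for $P_n$ certifies $|P_n|>2$ on $\overline{B(s,r)}$. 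A countable union of uniformly lower-computable open sets is lower-computable.

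\emph{Step 2 (lower-computability of $\partial\M$).} By \propref{p.comp-closed}, it suffices to produce a uniformly computable sequence of points dense in $\partial\M$. For each $p\geq 1$, the polynomial system $f_c^p(z)-z=0$ and $(f_c^p)'(z)-1=0$ in the two variables $(c,z)$ reduces by a resultant in $z$ to a polynomial $R_p(c)\in\ZZ[c]$ whose roots are exactly those parameters $c$ for which $f_c$ carries a parabolic cycle of period dividing $p$ with multiplier $1$. Every such $c$ lies in $\partial\M$, since parabolic parameters belong to the bifurcation locus. The roots of any polynomial in $\ZZ[c]$ form a uniformly computable finite set (e.g., by rational root-localization followed by arbitrary-precision Newton iteration), so enumerating over $p$ yields a uniformly computable sequence $(c_k)\subset\partial\M$.

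\emph{Main obstacle and conclusion.} The density of $(c_k)$ in $\partial\M$ is the genuinely non-trivial ingredient: it is the classical theorem of Douady--Hubbard that roots of hyperbolic components are dense in $\partial\M$. An equivalent alternative is to invoke density of Misiurewicz parameters in $\partial\M$ and to enumerate them via the polynomials $f_c^{m+p}(0)-f_c^m(0)\in\ZZ[c]$ together with the algebraically decidable side condition $f_c^k(0)\neq 0$ for $1\leq k\leq m+p$ that excludes hyperbolic-component centers. Once this density input is granted the computability verification is routine, and the theorem follows from \propref{p.comp-closed}.
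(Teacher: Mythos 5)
Your argument is correct, and Step 1 coincides with the paper's: both express $\CC\setminus\M$ as $\bigcup_n\{c:|f_c^n(0)|>2\}$ and appeal to uniform lower-computability of these escape sets. The divergence is in Step 2. The paper enumerates a dense sequence in $\partial\M$ via Misiurewicz parameters, that is, the zero sets of $f_c^{l+p}(0)-f_c^l(0)$, and simply states that their union is dense in $\partial\M$. You instead build the dense sequence out of parabolic parameters, encoded as roots of the resultants $R_p(c)=\mathrm{Res}_z\bigl(f_c^p(z)-z,\ (f_c^p)'(z)-1\bigr)$, and then invoke \propref{p.comp-closed}. Both families are dense in $\partial\M$ and consist of algebraic numbers with explicitly enumerable defining polynomials, so both routes succeed; density of parabolic parameters follows from the Ma\~n\'e--Sad--Sullivan description of the bifurcation locus together with the local non-constancy of the multiplier map (or directly from the Douady--Hubbard ray-landing theory), while density of Misiurewicz parameters is the standard fact the paper leans on. Your resultant formulation is a bit more explicit about where the computable enumeration comes from, and your alternative Misiurewicz route adds the side condition $f_c^k(0)\neq 0$ to discard superattracting centers; the paper's $W_{l,p}$ (where the evaluation at $0$ is implicit) silently includes those centers, which strictly speaking are not on $\partial\M$, so your version is the more careful of the two. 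Note also that every root of $R_p$ really is parabolic, hence genuinely lies on $\partial\M$: the two polynomials in $z$ have $c$-independent nonvanishing leading coefficients, so $R_p(c)=0$ exactly when some cycle of period dividing $p$ has $(f_c^p)'=1$ there, which forces the primitive multiplier to be a root of unity.
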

\begin{proof}
  Note that $$\CC\setminus \M = \bigcup_{n\in\NN} \{c \in \CC :  |f_{c}^{n}(0)|>2 \}.$$  Since $f_{c}^{n}(0)$ is computable as a function of $c$, uniformly in $n$, it follows that the open sets $\{c\in\CC:|f_{c}^{n}(0)|>2\}$ are uniformly lower-computable, which proves the first claim.  For the second claim, a simple way to compute a dense sequence of points in $\partial \M$ is by computing the so-called {\it Misiurewicz parameters},
  for which the critical point of $f_{c}$ is pre-periodic. In other words,  the union of sets $$W_{l,p}=\{c\;|\;f_{c}^{l+p}=f_{c}^{l}\}\text{ for }l,p\in\NN$$
  is dense in $\partial \M$.
\end{proof}

In virtue of Theorem \ref{comp-riem-map}, we note the following:
\begin{corollary}\label{RiemannMandel} The Riemann map $$\phi:\hat{\CC}\setminus \overline{\DD} \to \hat{\CC} \setminus \M$$ sending the complement of the unit disk to the complement of the Mandelbrot set, is computable. 
\end{corollary}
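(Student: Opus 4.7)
The plan is a direct application of Hertling's theorem (Theorem~\ref{comp-riem-map}) to the simply-connected domain $\hat{\CC}\setminus\M$ with base point $\infty$. The preceding theorem furnishes the two nontrivial hypotheses: lower-computability of $\hat{\CC}\setminus\M$ (as the complement of $\M$) and of $\partial\M$; the point $\infty$ is trivially computable.

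Since Theorem~\ref{comp-riem-map} is stated for domains $W\subsetneq\CC$ rather than domains on the Riemann sphere, I would first reduce to the bounded setting via the inversion $\sigma(z)=1/z$, which is a computable conformal automorphism of $\hat{\CC}$. Because $0$ is a super-attracting fixed point of $f_0(z)=z^2$, it lies in the interior of $\M$, and therefore $W':=\sigma(\hat{\CC}\setminus\M)$ is a bounded, simply-connected open subset of $\CC$ containing the computable base point $0$. Lower-computability transfers along $\sigma$ in a routine manner: a dense sequence of computable points in $\CC\setminus\M$ yields, under $\sigma$, a dense sequence of computable points in $W'\setminus\{0\}$, and augmenting with the computable ball around $0$ contained in $W'$ supplies the required density; likewise $\partial W'=\sigma(\partial\M)$ is a lower-computable closed set. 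Applying Theorem~\ref{comp-riem-map} to $(W',0)$ now produces a computable Riemann map $f:\DD\to W'$ with $f(0)=0$ and $f'(0)>0$, together with its computable inverse.

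Setting $\iota(z)=1/z:\hat{\CC}\setminus\overline{\DD}\to\DD$ (so that $\iota(\infty)=0$), the composition $\phi:=\sigma^{-1}\circ f\circ\iota$ is a conformal isomorphism $\hat{\CC}\setminus\overline{\DD}\to\hat{\CC}\setminus\M$ built from three computable maps, and hence is itself computable. A short Laurent expansion at $\infty$ shows that $\phi(z)/z\to 1/f'(0)>0$, so $\phi$ differs from the standard Böttcher normalization only by a positive real constant which is itself computable from $f'(0)$; the computability statement is therefore unaffected by the choice of normalization.

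The only delicate point is the transfer of lower-computability from $\hat{\CC}\setminus\M$ and $\partial\M$ to $W'$ and $\partial W'$ under $\sigma$. This is routine once one observes that $\sigma$ is bi-Lipschitz away from any neighborhood of the origin (equivalently, that $\sigma$ preserves the computable metric structure of $\hat{\CC}$ on the relevant subdomains), and that $0\in\mathrm{int}(W')$ permits enumeration of a computable ball around $0$ as part of $W'$.
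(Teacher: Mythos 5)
Your proposal is correct and follows the same route as the paper: the corollary is an immediate consequence of Hertling's characterization (Theorem~\ref{comp-riem-map}) once the preceding theorem supplies lower-computability of $\hat{\CC}\setminus\M$ as an open set and of $\partial\M$ as a closed set. The paper handles the passage from planar domains to domains on $\riem$ implicitly, via its blanket remark that the results extend to the Riemann sphere with the spherical metric, whereas you make the transfer explicit through the inversion $\sigma(z)=1/z$ and correctly dispose of the normalization discrepancy by noting $f'(0)$ is computable; both are valid.
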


It is unknown whether the whole of $\M$ is a lower-computable set. A positive answer would imply computability of $\M$.
In fact, Hertling \cite{HertMandel} also showed:
\begin{thm}
Density of Hyperbolicity Conjecture implies that $\M$ is lower-computable, and hence, computable.
\end{thm}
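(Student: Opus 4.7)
The plan is to combine the hypothesis with the upper-computability of $\M$ already established via the lower-computability of $\CC \setminus \M$ in the previous theorem. It therefore suffices to show that $\M$ is lower-computable as a closed set, i.e., that one can semi-decide $B_i \cap \M \neq \emptyset$ uniformly in the index $i$ of an ideal ball.

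First I would introduce the hyperbolic locus
\[
H = \{ c \in \CC : f_c \text{ has an attracting periodic cycle in } \CC \}.
\]
Every attracting cycle captures the critical orbit of $0$ (classical Fatou), so $H \subseteq \M$; and hyperbolicity is an open condition on $c$, so $H$ is open. Under the Density of Hyperbolicity Conjecture, $H$ is dense in $\M$, so $\M = \overline{H}$. Because $H$ is open, a rational open ball $B_i$ meets $\M$ if and only if it meets $H$. The task thus reduces to showing that $H$ is a lower-computable open set.

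The heart of the argument is an effective form of the Banach contraction principle. For each integer $p\geq 1$ and each tuple of complex rationals $(c_0, z_0)$ together with positive rationals $(r,\lambda)$ with $\lambda<1$, the predicate
\[
f_{c_0}^p\!\bigl(\overline{B(z_0,r)}\bigr) \subset B(z_0,r) \quad \text{and} \quad \max_{z \in \overline{B(z_0,r)}} \bigl|(f_{c_0}^p)'(z)\bigr| \leq \lambda
\]
is semi-decidable, since $f_{c_0}^p$ and its derivative are explicit polynomials with rational coefficients and rigorous upper bounds for polynomial moduli over rational disks can be obtained, for instance, by interval arithmetic. Whenever this predicate holds, the contraction principle forces $f_{c_0}^p$ to have a unique attracting fixed point inside $B(z_0,r)$, so $c_0 \in H$; and, by continuous dependence of the same polynomial estimates on the parameter, the certificate persists for all $c$ in some rational open neighborhood of $c_0$. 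Enumerating over all $p$ and all such rational tuples produces a lower-computable open set $\tilde H \subseteq H$.

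To close the argument, I would verify $\tilde H = H$. If $c \in H$, then for some $p$ the iterate $f_c^p$ has a fixed point $z^{\ast}$ with multiplier $\lambda^{\ast}$ of modulus $<1$, so local linearization gives a disk around $z^{\ast}$ on which $f_c^p$ is a strict contraction with Lipschitz constant arbitrarily close to $|\lambda^{\ast}|$; approximating $c$ and $z^{\ast}$ by complex rationals, shrinking the target $\lambda$ slightly and enlarging $r$ slightly yields a valid certificate whose parameter neighborhood contains the original $c$. Combined with the reductions above, this gives lower-computability of $\M$, and hence computability. The step I expect to be the main obstacle is not the dynamics but the effective packaging of the contraction certificate uniformly in $p$: the modulus $\max_z |(f_{c_0}^p)'(z)|$ must be enclosed rigorously even though the naive bound grows exponentially with $p$, so one should verify that interval-arithmetic bounds tight enough to be less than $1$ become available once the true cycle really is attracting.
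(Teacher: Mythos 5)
Your proposal is correct and follows essentially the same route as the paper: both arguments reduce lower-computability of $\M$ under Density of Hyperbolicity to lower-computability of the (dense, open) hyperbolic parameter sets, which is certified by finding a periodic point with multiplier of modulus less than one. Your version simply unpacks the contraction-mapping certificate that the paper dismisses with ``it is not hard to see,'' and your closing worry about exponential growth is a complexity concern, not a computability one, so it does not threaten the argument.
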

Indeed, let $U_m$ be defined as the set of parameters $c\in\CC$ such that $f_c$ has a point $p\in\CC$ with $f_c^m(p)=p$ and $|Df_c^m(p)|<1$. It is not hard to see that such a set is lower-computable. Density of Hyperbolicity implies that the sets $U_m$ are dense in $\cM$.

Note that the same statements hold for the $n-1$ complex dimensional connectedness locus of the family of polynomials of degree $n$, with similar proofs. On the other hand, it is possible to construct a one-parameter complex family of polynomials in which the corresponding objects are not computable. In
\cite{Mandel}, Coronel, Rojas, and Yampolsky have recently shown:

\begin{thm} There exists an explicitly computable complex number $\lambda$ such that the bifurcation locus  of the one parameter family $$ f_{c}(z) = \lambda z + cz^{2} + z^{3} $$ is not computable.  
\end{thm}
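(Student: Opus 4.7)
The plan is to transfer the non-computability of the conformal radius of certain Siegel disks to the bifurcation locus of the cubic family, via a rescaling argument.

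Combining Theorems~\ref{th-comp1} and \ref{noncomput-Julia}, I would fix an explicitly computable Brjuno rotation number $\theta^*$ for which the conformal radius $r(\theta^*)$ of the Siegel disk of $P_{\theta^*}(z)=z^2+e^{2\pi i\theta^*}z$ is not a computable real number. Setting $\lambda = e^{2\pi i\theta^*}$ will be my candidate constant. For the cubic family $f_c(z)=\lambda z+cz^2+z^3$, the origin is a persistent Siegel fixed point with rotation number $\theta^*$ for every $c$. The change of coordinates $w=cz$ conjugates $f_c$ to $\tilde f_c(w)=\lambda w+w^2+w^3/c^2$, which converges locally uniformly to $P_{\theta^*}$ as $|c|\to\infty$. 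The critical points $z_{\pm}(c)=\tfrac{1}{3}(-c\pm\sqrt{c^2-3\lambda})$ of $f_c$ are computable functions of $c$, and one of them, rescaled as $w_+(c)=cz_+(c)$, converges to the critical point $-\lambda/2$ of $P_{\theta^*}$. Consequently the rescaled Siegel disk $c\cdot\Delta_c$ converges as a pointed domain to the Siegel disk of $P_{\theta^*}$, and its conformal radius $|c|\,r(c)$ converges to $r(\theta^*)$ with a modulus that can be made effective in $|c|$ via a Schwarz lemma estimate controlling the perturbation.

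The crucial structural step is to relate the bifurcation locus $\mathcal{B}$ of $f_c$ to $r(c)$. For cubic polynomials with a prescribed Siegel multiplier at $0$, the bifurcation locus contains real-analytic arcs along which the ``captured'' critical orbit transitions between being trapped near $\overline{\Delta_c}$ and escaping it or being captured elsewhere; this is precisely where the connectedness of the filled Julia set changes. Under the rescaling $w=cz$, these arcs accumulate in the $w$-plane onto features whose geometry is rigidly controlled by the Siegel disk of $P_{\theta^*}$, and in particular by $r(\theta^*)$. If $\mathcal{B}$ were computable in the Hausdorff sense, then combining a Hausdorff-precision approximation of $\mathcal{B}$ with the computable critical curve $c\mapsto w_+(c)$ and the effective rescaling above would yield an algorithm producing arbitrarily accurate approximations of $r(\theta^*)$, contradicting the choice of $\theta^*$.

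The main obstacle is making this third step fully precise and effective: one has to exhibit a canonical, algorithmically extractable component of $\mathcal{B}$ whose position in the rescaled plane is controlled by $r(\theta^*)$ with a \emph{computable} modulus, and prove a correspondingly effective version of the continuity of Siegel disks under perturbation within the one-parameter slice $\{\lambda z + cz^2 + z^3\}$. This requires a careful structural analysis of the cubic slice with fixed Siegel multiplier at the origin (in the spirit of the work on the ``captured'' parameter regions of cubic polynomials), together with a quantitative refinement of the topological continuity of Siegel disks that is strong enough to ensure the reduction goes through uniformly as $|c|\to\infty$.
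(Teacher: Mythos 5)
Your choice of $\lambda = e^{2\pi i\theta^*}$ with $\theta^*$ a computable Brjuno number for which $r(\theta^*)$ is non-computable is the right starting point, and the rescaling observation $\tilde f_c(w) = c f_c(w/c) = \lambda w + w^2 + w^3/c^2 \to P_{\theta^*}$ (with $cz_+(c)\to -\lambda/2$) is correct. But the reduction you sketch has a genuine gap that I do not think is merely a matter of filling in technicalities.

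First, a conceptual confusion: the bifurcation locus $\mathcal{B}$ is a subset of the parameter plane $\CC_c$, while the rescaled coordinate $w=cz$ lives in the dynamical plane. When you say arcs of $\mathcal{B}$ ``accumulate in the $w$-plane onto features whose geometry is rigidly controlled by the Siegel disk of $P_{\theta^*}$,'' you are implicitly mapping parameter-plane arcs into the dynamical plane, and it is exactly this map that must be constructed and shown computable; nothing in the rescaling provides it. Second, it is far from clear that $\mathcal{B}$ even has nontrivial structure as $|c|\to\infty$, which is the only regime your rescaling probes. For large $|c|$ the second critical point $z_-(c)\approx -2c/3$ escapes immediately, and $z_+(c)$ shadows the critical orbit of $P_{\theta^*}$; if (as is known for many Brjuno $\theta$) the critical point of $P_{\theta^*}$ lies on $\partial\Delta_{\theta^*}$, then whether $z_+(c)$ falls inside or outside $\Delta_c$ is a \emph{second-order} question in $1/c^2$, and without that analysis one cannot even assert that $\mathcal{B}$ is unbounded. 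If $\mathcal{B}$ turns out to be compact, the $|c|\to\infty$ rescaling tells you nothing about it. Third, even granting convergence of the rescaled Siegel disks (which requires upper \emph{and} lower semicontinuity of the conformal radius along the family $\lambda w + w^2 + w^3/c^2$ — the lower bound is the delicate part, since Siegel disks can implode under perturbation), you still need to show how a Hausdorff $2^{-n}$-approximation of a set in the $c$-plane yields the single real number $r(\theta^*)$ with a computable modulus. A Siegel disk boundary is a complicated fractal; nothing in the rescaling isolates the scalar $r(\theta^*)$ from it. The argument in the cited paper instead rests on a structural analysis of the slice (capture components and their boundaries, and how these parameter-plane sets are computably tied to the geometry of the Siegel disk) rather than on a limit at parameter infinity; your proposal does not yet supply a substitute for that analysis, and you acknowledge as much. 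Until the third step is replaced by a concrete, effective reduction from $r(\theta^*)$ to $\mathcal{B}$, the proof is incomplete.
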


An interesting related question is the computability of the area of $\M$. Since $\CC\setminus \M$
is a lower-computable set, it follows that the area of $\M$ is an upper-computable real number.  The following simple conditional implication holds:

\begin{proposition}If the area of $\M$ is computable, then $\M$ is computable. 
\end{proposition}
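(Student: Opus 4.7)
The plan is to establish lower-computability of $\M$. Combined with the already-noted upper-computability (which follows from $\CC\setminus\M$ being lower-computable), this will give that $\M$ is computable. By Definition~\ref{d.upper.comp}, lower-computability amounts to semi-deciding, uniformly in $i$, the relation $B_i\cap\M\neq\emptyset$.

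First, I would exploit the classical fact recalled in the exposition that $\M$ equals the closure of its interior. Since $B_i$ is open, this gives the chain of equivalences
$$B_i\cap\M\neq\emptyset \;\iff\; B_i\cap\Int(\M)\neq\emptyset \;\iff\; \exists \text{ ideal closed ball } \overline{B_j}\subset B_i \text{ with } \meas(\overline{B_j}\cap\M)>0.$$
For the nontrivial direction of the last equivalence, given $y\in B_i\cap\Int(\M)$, one can pick an ideal closed ball $\overline{B_j}$ near $y$ lying in $B_i\cap\Int(\M)$; then $\meas(\overline{B_j}\cap\M)=\meas(\overline{B_j})>0$. Conversely, positive area trivially implies nonempty intersection.

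The crux of the proof is Step 2: showing that, under the hypothesis that $\meas(\M)$ is computable, $\meas(\overline{B_j}\cap\M)$ is lower-computable uniformly in $j$. Since $\M\subset B(0,2)\subset B(0,3)$, I write
$$\meas(\overline{B_j}\cap\M)=\meas(\M)-\meas\bigl(\M\cap(B(0,3)\sm\overline{B_j})\bigr).$$
The first summand is computable by hypothesis. For the second summand, observe that $U:=B(0,3)\sm\overline{B_j}$ is a computable open set of known computable area, and on the disjoint decomposition $U=(U\cap\M)\sqcup(U\cap(\CC\sm\M))$ the areas add, so
$$\meas(\M\cap U)=\meas(U)-\meas\bigl((\CC\sm\M)\cap U\bigr).$$
The set $(\CC\sm\M)\cap U$ is the intersection of two lower-computable open sets, hence itself a lower-computable open set, so its area is lower-computable. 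Thus $\meas(\M\cap U)$ is upper-computable, and subtracting this from the computable quantity $\meas(\M)$ produces the desired lower-computable estimate on $\meas(\overline{B_j}\cap\M)$.

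The final step is then purely algorithmic: lower-computability of $\meas(\overline{B_j}\cap\M)$ makes the predicate $\meas(\overline{B_j}\cap\M)>0$ semi-decidable. The procedure that semi-decides $B_i\cap\M\neq\emptyset$ enumerates all ideal closed balls $\overline{B_j}\subset B_i$ (with, say, $\overline{B_j}\subset B(0,3)$, which loses no generality since $\M\subset B(0,2)$) and runs in parallel the semi-decisions for $\meas(\overline{B_j}\cap\M)>0$, halting on the first success. Correctness follows from the equivalence established in Step~1. The main obstacle I would expect is the measure-theoretic bookkeeping of Step 2 -- in particular, the need to extract \emph{lower}-computability of $\meas(\overline{B_j}\cap\M)$ rather than just upper-computability, which is where the hypothesis on $\meas(\M)$ is essential; everything else reduces to the standard interplay between ``closure of its interior'' and positivity of area.
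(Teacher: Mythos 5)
Your proof is correct. It establishes the same conclusion as the paper --- lower-computability of $\M$, which together with the previously-noted upper-computability gives computability --- but it does so by a hands-on argument rather than by invoking the theory of computable measures. The paper's own proof is a two-line appeal to black boxes: it cites an external result (Proposition 2.3.1.1 of~\cite{Cristobal}) stating that when a set has lower-computable complement and computable area, Lebesgue measure restricted to it is a computable measure; it then invokes Proposition~\ref{comp-supp} (supports of computable measures are lower-computable) and the fact that the support of the restricted measure is all of $\M$ (which, like your Step 1, tacitly uses that $\M$ is the closure of its interior). Your argument essentially unwinds this machinery: the chain $\meas(\overline{B_j}\cap\M)=\meas(\M)-\meas(U)+\meas\bigl((\CC\setminus\M)\cap U\bigr)$ is precisely what makes the restricted measure computable in the cited proposition, and the enumeration-plus-dovetailing in your final step is the content of Proposition~\ref{comp-supp} specialized to this situation. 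The trade-off is the usual one: the paper's version is compact and leans on general theory, while yours is self-contained and makes visible exactly where each hypothesis (lower-computability of the complement, computability of the area, and $\M=\overline{\Int\M}$) enters. Both are valid; yours would be the better choice if the surrounding computable-measure infrastructure were not already available.
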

\begin{proof} Since $\CC\setminus \M$ is a lower-computable set, if the area of $\M$ was a computable real number, by Proposition 2.3.1.1 from \cite{Cristobal} Lebesgue measure restricted  to $\M$ would be a computable measure. By Proposition~\ref{comp-supp}, the support of this measure is  lower-computable.  Since the support in this case is $\M$, the result follows. 
\end{proof}

Let us also note another famous conjecture about the Mandelbrot set:

\medskip
\noindent
{\bf Conjecture (MLC).} {\it The Mandelbrot set $\M$  is locally connected.}

\medskip
MLC is known (see \cite{Do2}) to be stronger than Density of Hyperbolicity Conjecture, and thus also implies computability of $\M$ and of $\partial \M$. In this case, by virtue of Corollary~\ref{RiemannMandel} and Theorem \ref{main3}, it would be natural to ask whether $\M$ admits a computable Carath\'eodory modulus. Some partial results in this direction are known (see e.g. \cite{HubbardYoc}). In all of them, local connectedness is established at subsets of points of $\cM$ by providing a constructive Carath{\'e}odory modulus at these points.

\bibliographystyle{plain}
\bibliography{Bib_RY}

\end{document}